\begin{document}
\newtheorem{theorem}{\indent Theorem}[section]
\newtheorem{proposition}[theorem]{\indent Proposition}
\newtheorem{definition}[theorem]{\indent Definition}
\newtheorem{lemma}[theorem]{\indent Lemma}
\newtheorem{remark}[theorem]{\indent Remark}
\newtheorem{corollary}[theorem]{\indent Corollary}

\begin{center}
    {\large \bf Normalized ground states for the fractional nonlinear  Schr\"{o}dinger equations}
\vspace{0.5cm}\\{Binhua Feng$^{1,*}$, Jiajia Ren$^{1}$, Qingxuan Wang$^{2,*}$}\\
{\small $^1$Department of Mathematics, Northwest Normal University, Lanzhou, 730070, China \\$^2$Department of Mathematics, Zhejiang Normal University, Jinhua, 321004, China}\\
\end{center}

\renewcommand{\theequation}{\arabic{section}.\arabic{equation}}
\numberwithin{equation}{section}
\footnote[0]{\hspace*{-7.4mm}
E-mail: binhuaf@163.com(B. Feng), renjiajia0327@163.com(J. Ren), wangqx@zjnu.edu.cn(Q. Wang)\\
$^*$Corresponding author.
}

\renewcommand{\baselinestretch}{1.3}
\large\normalsize
\begin{abstract}
In this paper, we study the existence and instability of standing waves with a prescribed $L^2$-norm for the fractional Schr\"{o}dinger equation
\begin{equation}\label{e0}
	i\partial_{t}\psi=(-\Delta)^{s}\psi-f(\psi),
	\end{equation}
	where $0<s<1$, $f(\psi)=|\psi|^{p}\psi$ with $\frac{4s}{N}<p<\frac{4s}{N-2s}$ or $f(\psi)=(|x|^{-\gamma}\ast|\psi|^2)\psi$ with $2s<\gamma<\min\{N,4s\}$. To this end, we look for normalized solutions of the associated stationary equation
\begin{equation}\label{elliptic equation0}
(-\Delta)^s u+\omega u-f(u)=0.
\end{equation}
Firstly, by constructing a suitable submanifold of a $L^2$-sphere, we prove the existence of a normalized solution for \eqref{elliptic equation0} with least energy in the $L^2$-sphere, which corresponds to a normalized ground state standing wave of \eqref{e0}. Then, we show that each normalized ground state of \eqref{elliptic equation0} coincides a ground state of \eqref{elliptic equation0} in the usual sense. Finally, we obtain the sharp threshold of global
existence and blow-up for \eqref{e0}. Moreover, we can use this sharp threshold to show that all normalized ground state standing waves are strongly unstable by blow-up.

{\bf Keywords:} Fractional Schr\"{o}dinger equation; Normalized ground states; Sharp threshold; Strong instability
\end{abstract}
\section{Introduction}
	In recent years, there has been a great deal of interest in using fractional Laplacians to model the physical phenomena. By extending the
	Feynman path integral from the Brownian-like to the L\'evy-like quantum mechanical
	paths, Laskin in \cite{Laskin1,Laskin2} used the theory of functionals over functional measure generated
	by the L\'evy stochastic process to deduce the following fractional nonlinear Schr\"{o}dinger	equation (NLS)
	\begin{equation}\label{e}
	i\partial_{t}\psi=(-\Delta)^{s}\psi-f(\psi),~~~\psi(0,x) = \psi_0 (x),
	\end{equation}
	where $0<s<1$, $f(\psi)=|\psi|^{p}\psi$ or $f(\psi)=(|x|^{-\gamma}\ast|\psi|^2)\psi$. The fractional differential
	operator $(-\Delta)^{s}$ is defined by $(-\Delta)^{s}\psi=\mathcal{F}^{-1}[|\xi|^{2s}\mathcal{F}(\psi)]$, where $\mathcal{F}$ and $\mathcal{F}^{-1}$
	are the Fourier transform and inverse Fourier transform, respectively. The fractional NLS also appears in the continuum limit of discrete models with long-range interactions (see e.g. \cite{KirkpatrickLenzmannStaffilani}) and in the description of Boson stars as well as in water wave dynamics (see e.g. \cite{FrohlichJonssonLenzmann}).


The intention of this paper is to study \eqref{e} from a variational perspective. To
this end, it is of great interest to consider standing waves to \eqref{e}, which are solutions
of the form $e^{i\omega t}u$, where $\omega \in \mathbb{R}$ is
a frequency and $u$ is complex-valued. This ansatz yields
\begin{equation}\label{elliptic equation}
(-\Delta)^s u+\omega u-f(u)=0,
\end{equation}
where $f(u)=|u|^{p}u$ or $f(u)=(|x|^{-\gamma}\ast|u|^2)u$.

At this moment, our intention is reduced to explore \eqref{elliptic equation}. To do this, we would
like to mention two substantially distinct options in terms of the frequency $\omega$. The first one is
to fix the frequency $\omega\in \mathbb{R}$. In this situation, every solution to \eqref{elliptic equation} corresponds to a
critical point of the action functional $J(u)$ on $H^s$, where
\begin{align}\label{action power}
J(u):= \frac{1}{2}\|u\|_{\dot{H}^s}^2+\frac{\omega}{2}\|u\|_{L^2}^2-\frac{1}{p+2}\|u\|_{L^{p+2}}^{p+2},~~if~f(u)=|u|^{p}u,
\end{align}
\begin{align}\label{action Hartree}
J(u):= \frac{1}{2}\|u\|_{\dot{H}^s}^2+\frac{\omega}{2}\|u\|_{L^2}^2-\frac{1}{4}\int_{\mathbb{R}^N} (|x|^{-\gamma}\ast|u|^2)(x)|u(x)|^2 dx,~~if~f(u)=(|x|^{-\gamma}\ast|u|^2)u.
\end{align}
In this case particular attention is devoted to least action solutions, namely solutions minimizing
$J(u)$ among all non-trivial solutions.

Alternatively, it is interesting to study solution to \eqref{elliptic equation} having prescribed $L^2$-norm, namely, for any given $c>0$, to consider solution to \eqref{elliptic equation} satisfying the
$L^2$-norm constraint
\begin{align}\label{mass constraint}
S(c)=\{u\in H^s:~~\|u\|_{L^2}^2=c\},~~c>0.
\end{align}
Physically, such a solution is so-called normalized solution to \eqref{elliptic equation}, which formally
corresponds to a critical point of the energy functional $E(u)$ restricted on $S(c)$, where
\begin{align}\label{energy power}
E(u):= \frac{1}{2}\|u\|_{\dot{H}^s}^2-\frac{1}{p+2}\|u\|_{L^{p+2}}^{p+2},~~if~f(u)=|u|^{p}u,
\end{align}
\begin{align}\label{energy Hartree}
E(u):= \frac{1}{2}\|u\|_{\dot{H}^s}^2-\frac{1}{4}\int_{\mathbb{R}^N} (|x|^{-\gamma}\ast|u|^2)(x)|u(x)|^2 dx,~~if~f(u)=(|x|^{-\gamma}\ast|u|^2)u.
\end{align}
It is worth pointing out that, in this situation, the frequency $\omega\in \mathbb{R}$ is an unknown part, which
is determined as the Lagrange multiplier associated to the constraint $S(c)$.

From a physical point of view, it is quite meaningful to consider normalized solution to \eqref{elliptic equation}. This is not only because the $L^2$-norm of solution to the Cauchy problem of \eqref{e} is conserved along time, that is, for any $t>0$
\[
\int_{\mathbb{R}^N} |\psi(t,x)|^2 dx=\int_{\mathbb{R}^N} |\psi_0(x)|^2 dx,
\]
see Proposition \ref{proposition radial lwp}, but also because the mass has often a clear physical meaning; for instance, it represents the power supply
in nonlinear optics, or the total number of atoms in Bose-Einstein condensation, two main fields
of application of the NLS.
Moreover, this approach turns out to be useful also from the purely
mathematical perspective, since it gives a better insight of the properties of the stationary solutions
for \eqref{e}, such as stability or instability (this was already evident in the seminal contributions by
H. Berestycki and T. Cazenave \cite{bcaze}, and by T. Cazenave and P.-L. Lions \cite{cl}). For these reasons,
here we focus on existence and properties of solutions to \eqref{elliptic equation} with prescribed mass and the $L^2$-supercritical nonlinearity, a problem
which was, up to now, essentially unexplored.

The existence of normalized stationary states can be formulated as the following problem: given
$c>0$, we aim to find $(u_c,\omega_c)\in H^s\times \mathbb{R}$ solving \eqref{elliptic equation} together
with the normalization condition \eqref{mass constraint}.
When $f(u)=|u|^{p}u$ with $0<p<\frac{4s}{N-2s}$ or $f(u)=(|x|^{-\gamma}\ast|u|^2)u$ with $0<\gamma<\min\{N,4s\}$,
 it is standard that $E(u)$ is of class $C^1$ in $H^s$, and any critical point $u$
of $E|_{S(c)}$ corresponds to a solution to \eqref{elliptic equation} satisfying \eqref{mass constraint}, with the parameter $\omega\in \mathbb{R}$ appearing
as Lagrange multiplier. We are particularly interested in ground state solutions, defined as
follows:
 \begin{definition}
(Ground states). We write that $u_c$ is a ground state of \eqref{elliptic equation} on $S(c)$ if it is a solution to \eqref{elliptic equation} having
minimal energy among all the solutions which belongs to $S(c)$:
\[
E'|_{S(c)}(u_c)=0~~and~~E(u_c)=\inf\{E(v_c):~v_c\in S(c),~E'|_{S(c)}(v_c)=0\}.
\]
\end{definition}
Before stating our main results, let us recall known results related to the normalized solutions for some Schr\"{o}dinger type equations and systems.
It is well known that, when
dealing with the Schr\"{o}dinger equations, the $L^2$-critical exponent
plays a special role. This is the threshold exponent for many dynamical properties such as global
existence vs. blow-up, and the stability or instability of ground states. From the variational point
of view, if the problem is purely $L^2$-subcritical, then $E(u)$ is bounded from
below on $S(c)$. Thus, for every $c>0$, ground states can be found as global minimizers of $E|_{S(c)}$, see \cite{ca2003,cl}.  Moreover, the set of ground states is orbitally stable. In the $L^2$-supercritical case, on the contrary, $E|_{S(c)}$ is unbounded from below. By
exploiting the mountain pass lemma and a smart compactness argument, L. Jeanjean \cite{jean97NA} showed that a normalized ground state does exist for every $c>0$ also in this case. For quite a long time the paper \cite{jean97NA} was the only one dealing with existence of normalized
solutions in cases when the energy is unbounded from below on the $L^2$-constraint. More recently,
however, problems of this type received much attention, see \cite{jeansiam,jean-luo,jean-gou Tran,jean-luo-wang,luoxiao,soave,soave critical} for normalized solutions
to scalar equations in the whole space $\mathbb{R}^N$, see \cite{bartsch-Jeanjean,bartsch-Jeanjean-Soave,soajfa,bartsch-Soave,
gou-zhang,gou-nonlinearity,li-luo} for normalized solutions to systems in $\mathbb{R}^N$.

For the fractional Schr\"{o}dinger equation \eqref{elliptic equation}, in the $L^2$-subcritical case, i.e. $0<p<\frac{4s}{N}$ or $0<\gamma<2s$, $E(u)$ is bounded from
below on $S(c)$. Thus, for every $c>0$, ground states can be found as global minimizers of $E|_{S(c)}$. Moreover, the set of ground states is orbitally stable. Recently, these problems have been studied by using
the concentration compactness principle in \cite{bjde,chcpaa,f13ejde,f19jmp,guobl,wud}, using
 the profile decomposition theory in \cite{f18cma,f18jmaa,pengshi,zhujdde,zjee}. In the $L^2$-supercritical case, i.e. $\frac{4s}{N}<p<\frac{4s}{N-2s}$ or $2s<\gamma<2s$, on the contrary, $E|_{S(c)}$ is unbounded from below.
 To the best of our knowledge, there are no any results in this case.

The aim of this paper is to consider the existence and properties of normalized
ground states to \eqref{elliptic equation}, the sharp threshold of global
existence and blow-up, and the strong instability of normalized
ground state standing waves for \eqref{e} in the $L^2$-supercritical case.
Our main results are as follows:
\begin{theorem}\label{theorem existence}
Let $f(u)=|u|^{p}u$ with $\frac{4s}{N}<p<\frac{4s}{N-2s}$ or $f(u)=(|x|^{-\gamma}\ast|u|^2)u$ with $2s<\gamma<\min\{N,4s\}$. Then for any $c>0$, there exists a couple of weak solution $(u_c,\omega_c)\in H^s\times \mathbb{R}^+$ to problems \eqref{elliptic equation}-\eqref{mass constraint}. Moreover, we have
\begin{equation*}
\left\{
\begin{array}{l}
\|u_c\|_{\dot{H}^s}\rightarrow +\infty, \\
\omega_c\rightarrow +\infty,\\
E(u_c)\rightarrow +\infty,
\end{array}%
\right.
\end{equation*}%
as $c\rightarrow 0^+$ and
\begin{equation*}\label{FNLS Hartree}
\left\{
\begin{array}{l}
\|u_c\|_{\dot{H}^s}\rightarrow 0, \\
\omega_c\rightarrow 0,\\
E(u_c)\rightarrow 0,
\end{array}%
\right.
\end{equation*}%
as $c\rightarrow +\infty$.
\end{theorem}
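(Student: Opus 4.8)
The plan is to produce $u_c$ as a minimizer of $E$ over a Pohozaev-type manifold built from the $L^2$-preserving dilations $u_\lambda(x):=\lambda^{N/2}u(\lambda x)$, and then to read off the asymptotics in $c$ from an exact homogeneity law for the ground-state level. Set
\[
P(u):=\left.\tfrac{d}{d\lambda}\right|_{\lambda=1}E(u_\lambda),\qquad \mathcal{M}(c):=\{u\in S(c):P(u)=0\},\qquad m(c):=\inf_{\mathcal{M}(c)}E,
\]
so that $P(u)=s\|u\|_{\dot H^s}^2-\tfrac{Np}{2(p+2)}\|u\|_{L^{p+2}}^{p+2}$ in the power case and $P(u)=s\|u\|_{\dot H^s}^2-\tfrac{\gamma}{4}D(u)$ with $D(u):=\int_{\mathbb{R}^N}(|x|^{-\gamma}\ast|u|^2)|u|^2\,dx$ in the Hartree case. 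Since $p>\tfrac{4s}{N}$ (resp.\ $\gamma>2s$), the function $g_u(\lambda):=E(u_\lambda)$ has the form $A\lambda^{2s}-B\lambda^{\theta}$ with $A,B>0$ and $\theta>2s$, so for every $u\in S(c)\setminus\{0\}$ it has a unique critical point $\lambda(u)$, which is a strict maximum; hence $\mathcal{M}(c)\neq\emptyset$, each $u\in S(c)$ projects onto $\mathcal{M}(c)$ via $u\mapsto u_{\lambda(u)}$, and $m(c)=\inf_{u\in S(c)}\max_{\lambda>0}E(u_\lambda)$. On $\mathcal{M}(c)$ the energy reduces to a positive multiple of the kinetic term, $E(u)=\tfrac{Np-4s}{2Np}\|u\|_{\dot H^s}^2$ (resp.\ $\tfrac{\gamma-2s}{2\gamma}\|u\|_{\dot H^s}^2$), and the fractional Gagliardo--Nirenberg inequality --- resp.\ the Hardy--Littlewood--Sobolev inequality, applicable because $\gamma<\min\{N,4s\}$ --- gives $\|u\|_{\dot H^s}\geq\delta_0(c)>0$ on $\mathcal{M}(c)$; in particular $0<m(c)<\infty$.

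The next step is the exact identity $m(c)=c^{\delta}m(1)$ with an explicit exponent $\delta<0$ --- $\delta=\tfrac{4s+2sp-Np}{4s-Np}$ in the power case, $\delta=\tfrac{4s-\gamma}{2s-\gamma}$ in the Hartree case --- obtained by applying to a near-minimizer the two-parameter substitution $u\mapsto\mu\,u(\nu\cdot)$ with $(\mu,\nu)$ pinned so that the image lands on $\mathcal{M}(c)$, and then reversing the computation. The key observation is that $\delta<0$ is \emph{equivalent} to the energy-subcriticality bounds $p<\tfrac{4s}{N-2s}$ and $\gamma<4s$; consequently $c\mapsto m(c)$ is continuous and strictly decreasing, which will play the role usually played by strict subadditivity.

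For the compactness I would first observe that $m(c)$ is unchanged if the infimum is restricted to radially symmetric decreasing functions: for $u\in S(c)$ the Schwarz rearrangement $u^{*}$ keeps $\|u^{*}\|_{L^2}=\|u\|_{L^2}$ and $\|u^{*}\|_{L^{p+2}}=\|u\|_{L^{p+2}}$, increases $D$ (Riesz) and decreases $\|\cdot\|_{\dot H^s}$ (fractional Pólya--Szegő), so $E(u^{*}_\lambda)\le E(u_\lambda)$ for all $\lambda>0$ and hence $\max_\lambda E(u^{*}_\lambda)\le\max_\lambda E(u_\lambda)$. Take a radially decreasing minimizing sequence $u_n\in\mathcal{M}(c)$; since $\|u_n\|_{L^2}^2=c$ and $\|u_n\|_{\dot H^s}^2$ is a fixed multiple of $E(u_n)\to m(c)$, $(u_n)$ is bounded in $H^s_{\mathrm{rad}}$, so up to a subsequence $u_n\rightharpoonup u$ in $H^s$, and by the compact embeddings $H^s_{\mathrm{rad}}(\mathbb{R}^N)\hookrightarrow L^{p+2}$ (resp.\ $\hookrightarrow L^{4N/(2N-\gamma)}$, legitimate since $p+2<2^{*}_s$, resp.\ $\gamma<4s$) the nonlinear term passes to the limit; the uniform bound $\|u_n\|_{\dot H^s}\ge\delta_0$ then forces $u\neq0$. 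Writing $\|u\|_{L^2}^2=c'\le c$ and noting $P(u)\le0$ by weak lower semicontinuity, I would rule out both $c'<c$ and $P(u)<0$ by projecting $u$ onto $\mathcal{M}(c')$: its energy is $\le\tfrac{Np-4s}{2Np}\|u\|_{\dot H^s}^2\le\liminf E(u_n)=m(c)$, so $m(c')\le m(c)$, impossible for $c'<c$ by strict monotonicity and impossible for $c'=c$ unless $P(u)=0$. Hence $u\in\mathcal{M}(c)$ with $E(u)=m(c)$, and then $\|u\|_{\dot H^s}^2=\liminf\|u_n\|_{\dot H^s}^2$ upgrades weak to strong $H^s$-convergence.

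It remains to check that $u_c:=u$ solves the equation and to deduce the limits. Since $\partial_\lambda^2E(u_\lambda)|_{\lambda=1}<0$ on $\mathcal{M}(c)$, the manifold is a genuine natural constraint: writing the Lagrange relation $E'(u_c)=\alpha u_c+\beta P'(u_c)$ for the constraints $\|u\|_{L^2}^2=c$ and $P=0$ and testing it against the scaling direction $\partial_\lambda u_{c,\lambda}|_{\lambda=1}$ gives $\beta\,\partial_\lambda^2E(u_{c,\lambda})|_{\lambda=1}=0$, so $\beta=0$ and $E'|_{S(c)}(u_c)=0$, i.e.\ $(-\Delta)^su_c+\omega_cu_c=f(u_c)$ weakly with $\omega_c:=-\alpha$. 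Pairing the equation with $u_c$ and using $P(u_c)=0$ yields $\omega_c c=\big(\tfrac{2s(p+2)}{Np}-1\big)\|u_c\|_{\dot H^s}^2$ (resp.\ $\big(\tfrac{4s}{\gamma}-1\big)\|u_c\|_{\dot H^s}^2$), and the bracket is strictly positive exactly because $p<\tfrac{4s}{N-2s}$ (resp.\ $\gamma<4s$), so $\omega_c>0$. Finally $E(u_c)=m(c)=c^{\delta}m(1)$, $\|u_c\|_{\dot H^s}^2=m(c)/\kappa$ with $\kappa=\tfrac{Np-4s}{2Np}$ (resp.\ $\tfrac{\gamma-2s}{2\gamma}$), and $\omega_c=(\mathrm{const})\,m(c)/c=(\mathrm{const})\,c^{\delta-1}m(1)$; since $\delta<0$ and $\delta-1<0$, all three tend to $+\infty$ as $c\to0^+$ and to $0$ as $c\to+\infty$. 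The heart of the proof is the compactness step above: radial symmetrization plus the compact Sobolev embedding disposes of vanishing and dichotomy of the minimizing sequence, but the escape of mass to infinity has to be excluded, and that is precisely what the strict monotonicity of $m$ is for; checking that $\mathcal{M}(c)$ is a natural constraint is routine but needs some care in the fractional setting.
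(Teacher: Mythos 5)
Your proposal is correct and its skeleton (minimize $E$ on the manifold $\{Q=0\}\cap S(c)$, the inf--max characterization, the natural-constraint step, and the sign of the multiplier from pairing the equation with $u_c$ and using $Q(u_c)=0$) coincides with the paper's Section 3, but three ingredients are genuinely different. For compactness you symmetrize (fractional P\'olya--Szeg\H{o}, Riesz rearrangement, equimeasurability) and invoke the compact embedding $H^s_{\mathrm{rad}}\hookrightarrow L^q$ for $2<q<2^*_s$, whereas the paper minimizes the relaxed functional $\widetilde E$ over $\{v\in S(c):Q(v)\le 0\}$ and restores compactness by the translation lemma (Lemma \ref{lemma compactness lemma I}) together with Brezis--Lieb; both routes must call on the strict monotonicity of $c\mapsto m(c)$ to exclude loss of mass, and yours additionally produces a radial minimizer (it needs $N\ge 2$, which is consistent with the paper's Lemmas \ref{gn1}--\ref{lemma Pohozaev identity} and Proposition \ref{proposition existence}). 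For the asymptotics, your exact homogeneity $m(c)=c^{\delta}m(1)$, coming from the two-parameter scaling $\mu u(\nu\cdot)$ with $\mu^{p}=\nu^{2s}$ (resp.\ $\mu^{2}=\nu^{2s+N-\gamma}$), is correct, with $\delta=\frac{4s+2sp-Np}{4s-Np}<0$ (resp.\ $\frac{4s-\gamma}{2s-\gamma}<0$), and it delivers in one stroke both the strict decrease of $m$ and all six limits through $E(u_c)=m(c)$, $\|u_c\|^2_{\dot H^s}=m(c)/\kappa$ and $\omega_c=\mathrm{const}\cdot m(c)/c$; the paper instead proves monotonicity with a kinetic-energy-preserving rescaling (Lemma \ref{c=m(c)}), obtains the $c\to0^+$ blow-up from the Gagliardo--Nirenberg inequality combined with $Q(u_c)=0$, and the $c\to+\infty$ decay from an explicit test function in $V(c)$ --- so your identity is cleaner and in fact yields exact rates, a slight strengthening. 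Finally, in the natural-constraint step your dismissal of $\beta$ by ``testing against the dilation direction'' needs two justifications that the paper supplies in Lemma \ref{critical point 1}: the Lagrange rule requires the gradients of the two constraints to be linearly independent (the paper rules out the dependent alternative via the Pohozaev identity), and pairing with $\tfrac N2 u_c+x\cdot\nabla u_c$ is not directly legitimate for $u_c\in H^s$ only --- the paper circumvents this by applying the Pohozaev identity of Lemma \ref{lemma Pohozaev identity} to the modified equation $E'(u)+\omega_1Q'(u)+\omega_2u=0$. You flag this point yourself, and it is repairable with tools already in the paper, so I view it as a presentational rather than a substantive gap.
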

To the best of our knowledge, this seems to be the first
contribution regarding existence of normalized
ground states for the fractional NLS in the $L^2$-supercritical case.
The proof of this theorem is based on a constrained minimization method. In the mass-supercritical case, i.e., $\frac{4s}{N}<p<\frac{4s}{N-2s}$ or $2s<\gamma<\min\{N,4s\}$, the functional $E(u)$ is no longer bounded from below on $S(c)$, the minimization method on $S(c)$ used in \cite{f18jmaa,pengshi,zhujdde} does not work. Motivated by minimization method on Pohozaev manifold, we try to construct a submanifold of $S(c)$, on which $E(u)$ is bounded from below and coercive, and then we look for minimizers of $E(u)$ on such a submanifold.
Precisely, we introduce an auxiliary functional
\begin{align}\label{qupower}
Q(u):=s\|u\|_{\dot{H}^s}^2 -\frac{Np}{2(p+2)}\|u\|_{L^{p+2}}^{p+2},~~if~f(u)=|u|^{p}u,
\end{align}
\begin{align}\label{quhartree}
Q(u):=s\|u\|_{\dot{H}^s}^2 -\frac{\gamma}{4}\int_{\mathbb{R}^N} (|x|^{-\gamma}\ast|u|^2)(x)|u(x)|^2 dx,~~if~f(u)=(|x|^{-\gamma}\ast|u|^2)u.
\end{align}
and construct a submanifold $V(c)$ as follows
\begin{align}\label{submanifold}
V(c):=\{u\in S(c):~~Q(u)=0\}.
\end{align}
By considering the minimization problem
\begin{align}\label{minimization problem submanifold}
m(c):=\inf_{u\in V(c)}E(u),
\end{align}
we find a critical point of $E$ restricted to $V(c)$ and prove that it is indeed a critical point of $E$ restricted to $S(c)$. Let us denote the set of minimizers of $E$ on $V(c)$ as
\begin{align}\label{set minimizers}
\mathcal{M}_c:=\{u\in V(c):~E(u)=\inf_{v\in V(c)}E(v)\}.
\end{align}
Then we prove the existence part of Theorem \ref{theorem existence} by showing a simple property of $\mathcal{M}_c$.

Compared with \cite{jean97NA}, we use a constrained minimization method instead of a mini-max procedure. Although these two methods both work on finding a normalized ground state of problem \eqref{elliptic equation}-\eqref{mass constraint}, we believe that the constrained minimization method is more convenient in getting the normalized ground state solution to problem \eqref{elliptic equation}-\eqref{mass constraint}.
In particular, in order to solve the minimization problem \eqref{minimization problem submanifold}, we consider an equivalent minimization problem \eqref{minimization1}, which can be easily solved by using Brezis-Lieb's lemma. Moreover, it is easier to obtain the sharp threshold of global
existence and blow-up for \eqref{e} by using the minimization problem \eqref{minimization1}.

For any $\omega>0$, the existence of ground state solution $u_\omega$ to problem \eqref{elliptic equation} has been studied in \cite{bou,dinhinstability,gzjde,saanounijmp,zhujdde}. Next, we analyze the connection between the couple of weak solution $(u_c,\omega_c)$ to \eqref{elliptic equation} obtained in Theorem \ref{theorem existence} and $u_{\omega_c}$.
\begin{theorem}\label{theorem ground states}
Let $f(u)=|u|^{p}u$ with $\frac{4s}{N}<p<\frac{4s}{N-2s}$ or $f(u)=(|x|^{-\gamma}\ast|u|^2)u$ with $2s<\gamma<\min\{N,4s\}$. Then for any $u_c\in \mathcal{M}_c$, there exists $\omega_c>0$ such that $(u_c,\omega_c)\in H^s\times \mathbb{R}$ is a couple of weak solution to problem \eqref{elliptic equation}.
Furthermore, $u_c$ is a ground state solution to problem \eqref{elliptic equation} with $\omega=\omega_c$.
\end{theorem}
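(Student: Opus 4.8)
The plan is to establish the two assertions in turn.

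\emph{For the first assertion}, I would fix $u_c\in\mathcal M_c$, so that $u_c$ minimizes $E$ over $V(c)$, and exploit that $V(c)$ is a natural constraint. Introducing the $L^2$-preserving dilation $u^t(x):=t^{N/2}u(tx)$, a direct computation gives $\frac{d}{dt}E(u^t)=\tfrac1t\,Q(u^t)$, so that $t=1$ is the unique critical point of $t\mapsto E(u_c^t)$. Writing $\Phi:=\frac{d}{dt}u_c^t\big|_{t=1}$, dilation invariance of the mass yields $\langle u_c,\Phi\rangle=0$, while a short computation that uses $Q(u_c)=0$ together with the mass-supercriticality (which gives $Np/2>2s$, resp.\ $\gamma>2s$) yields $\langle Q'(u_c),\Phi\rangle=\frac{d}{dt}Q(u_c^t)\big|_{t=1}\neq0$. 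Hence $Q'(u_c)$ is not a scalar multiple of $u_c$, so $V(c)$ is a $C^1$-submanifold of $S(c)$ near $u_c$ and the Lagrange multiplier rule provides $\omega_c,\mu_c\in\mathbb R$ with $E'(u_c)+\omega_c u_c+\mu_c Q'(u_c)=0$; pairing this with $\Phi$ and using $\langle E'(u_c),\Phi\rangle=Q(u_c)=0$ and $\langle u_c,\Phi\rangle=0$ forces $\mu_c=0$, so $(u_c,\omega_c)$ solves \eqref{elliptic equation}. Finally, testing \eqref{elliptic equation} with $u_c$ and removing the nonlinear term via $Q(u_c)=0$ expresses $\omega_c\|u_c\|_{L^2}^2$ as a positive multiple of $\|u_c\|_{\dot H^s}^2>0$, the positivity being exactly the place where $p<\frac{4s}{N-2s}$ (resp.\ $\gamma<4s$) is used; hence $\omega_c>0$.

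\emph{For the second assertion}, let $J$ be the action \eqref{action power}/\eqref{action Hartree} at frequency $\omega_c$, so $J=E+\tfrac{\omega_c}{2}\|\cdot\|_{L^2}^2$. Since $u_c$ itself solves \eqref{elliptic equation} at frequency $\omega_c$, it suffices to prove $J(u_c)\le J(w)$ for every nontrivial $H^s$-solution $w$ of \eqref{elliptic equation} at that frequency. Any such $w$ obeys the Pohozaev identity (valid for $H^s$-solutions via the known regularity theory / the $s$-harmonic extension), which together with the identity obtained by testing \eqref{elliptic equation} with $w$ gives $Q(w)=0$; thus $w\in V(\|w\|_{L^2}^2)$ and $E(w)\ge m(\|w\|_{L^2}^2)$. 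Next, a scaling argument — applying $v\mapsto(\tau v)^t$ to $v\in V(c)$ and choosing $t=t(\tau)$ so that $Q((\tau v)^t)=0$, i.e.\ $t(\tau)$ the maximum point of $t\mapsto E((\tau v)^t)$ — yields $m(\tau^2 c)=\tau^{\alpha}m(c)$ for all $\tau>0$, with an exponent $\alpha<0$ depending only on $N,s,p$ (resp.\ $N,s,\gamma$); since $m(\cdot)>0$ by the sharp Gagliardo--Nirenberg inequality, $b\mapsto m(b)$ is a strictly convex, strictly decreasing positive power on $(0,\infty)$. Hence $h(b):=m(b)+\tfrac{\omega_c}{2}b$ is strictly convex, and a direct computation from the Nehari and Pohozaev identities of $u_c$ gives $h'(c)=0$, so $h$ attains its global minimum uniquely at $b=c$, with $h(c)=E(u_c)+\tfrac{\omega_c}{2}c=J(u_c)$. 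Consequently, for any nontrivial solution $w$ of \eqref{elliptic equation} at frequency $\omega_c$,
\[
J(w)=E(w)+\tfrac{\omega_c}{2}\|w\|_{L^2}^2\ \ge\ m(\|w\|_{L^2}^2)+\tfrac{\omega_c}{2}\|w\|_{L^2}^2=h(\|w\|_{L^2}^2)\ \ge\ h(c)=J(u_c),
\]
so $u_c$ realizes the least-action level among all nontrivial solutions at frequency $\omega_c$, i.e.\ it is a ground state of \eqref{elliptic equation} with $\omega=\omega_c$ in the usual sense.

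I expect the main obstacle to be the first step, namely showing that the Lagrange multiplier $\mu_c$ attached to $Q'(u_c)$ vanishes, so that a minimizer on $V(c)$ is an honest solution of \eqref{elliptic equation}; this hinges on the nondegeneracy $\frac{d}{dt}Q(u_c^t)\big|_{t=1}\neq0$, where the mass-supercriticality enters in an essential way, and on verifying the constraint qualification for the pair $\|u\|_{L^2}^2=c$, $Q(u)=0$. Secondary technical points are the rigorous proof of the homogeneity $m(\tau^2 c)=\tau^\alpha m(c)$ and of the identity $h'(c)=0$, the validity of the Pohozaev identity for all $H^s$-solutions, and — in the Hartree case — keeping the convolution term under control via the Hardy--Littlewood--Sobolev inequality at each step.
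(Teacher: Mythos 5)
Your proposal is correct, but its second half takes a genuinely different route from the paper. For the first assertion your dilation argument is in substance the paper's Lemma \ref{critical point 1} combined with Lemma \ref{critical point}: the nondegeneracy $\frac{d}{dt}Q(u_c^t)\big|_{t=1}=s\|u_c\|_{\dot H^s}^2\bigl(2s-\tfrac{Np}{2}\bigr)\neq 0$ is exactly the algebra by which the paper forces the multiplier of $Q'$ to vanish; the only caveat is that pairing the Euler--Lagrange equation with $\Phi=\tfrac N2 u_c+x\cdot\nabla u_c$ is not literally legitimate for a bare $H^s$ critical point (a priori $\Phi\notin H^s$), and the rigorous substitute is what the paper does, namely applying the Pohozaev identity (Lemma \ref{lemma Pohozaev identity}) to the combined equation $E'(u)+\omega_1Q'(u)+\omega_2u=0$, which has the same fractional elliptic form. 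For the second assertion the paper argues by comparison with an action ground state $\tilde u$ at the frequency $\omega_c$, whose existence on the Nehari manifold is quoted as standard; using $Q(u_c)=Q(\tilde u)=0$ and the Nehari identity it writes both $J_{\omega_c}$ and $E$ as fixed multiples of $\|\cdot\|_{\dot H^s}^2$, and then plays $J_{\omega_c}(u_c)\ge J_{\omega_c}(\tilde u)$ against the strict monotonicity of $c\mapsto m(c)$ (Lemma \ref{c=m(c)}) to force $J_{\omega_c}(u_c)=d(c)$. You instead avoid any existence input at frequency $\omega_c$: every nontrivial $H^s$ solution $w$ satisfies $Q(w)=0$ (the paper's Lemma \ref{w=0}), hence $E(w)\ge m(\|w\|_{L^2}^2)$, and you then use the exact homogeneity $m(\theta c)=\theta^{-\beta}m(c)$ (which indeed follows from the rescaling $u\mapsto\theta^{\frac{2s-N}{4s}}u(\theta^{-\frac{1}{2s}}\cdot)$ already used in Lemma \ref{c=m(c)}, with $\beta=\frac{2ps+4s-Np}{Np-4s}$, resp.\ $\beta=\frac{4s-\gamma}{\gamma-2s}$) together with the identity $\omega_c c=2\beta m(c)$, which one checks from $Q(u_c)=0$ and the Nehari identity in both the power and Hartree cases, to get $J(w)\ge h(\|w\|_{L^2}^2)\ge h(c)=J(u_c)$. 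Your route buys self-containedness (no citation of attainment of the Nehari level and no comparison function $\tilde u$) at the price of computing the explicit power law for $m$, whereas the paper only needs the qualitative strict decrease of $m$ but leans on the literature for the existence of $\tilde u$; both are sound.
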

\textbf{Remark.} $u_c$ is a ground state solution to problem \eqref{elliptic equation} with $\omega=\omega_c$ means that
\[
J(u_c)=\inf\{J(u):~~J'(u)=0~and~u\neq 0\},
\]
where $J(u)$ is defined in \eqref{action power} or \eqref{action Hartree}. Theorem \ref{theorem ground states} indicates that every normalized ground state of problem \eqref{elliptic equation} coincides a ground state of problem \eqref{elliptic equation}. This information is interesting itself. For example, it is well-known that the ground state solution $u_\omega$ of \eqref{elliptic equation} with $f(u)=|u|^{p}u$ is unique up to translation, see \cite{uniqueness1,uniqueness2}. We consequently obtain that for every $c>0$, the solution of minimizing problem \eqref{minimization problem submanifold} is unique up to translation.  Moreover, based on the minimizing problems \eqref{minimization problem submanifold} and \eqref{minimization1},  To this end, we introduce the following invariant sets.
\[
\mathcal{A}_c:=\{u\in S(c):~~Q(u)>0~and~E(u)<m(c)\},
\]
\[
\mathcal{B}_c:=\{u\in S(c):~~Q(u)<0~and~E(u)<m(c)\}.
\]
\begin{theorem}\label{theorem sharp}(Global versus blow-up dichotomy)
Let $N\geq 2$, $\frac{N}{2N-1} \leq s <1$, $\psi_0\in H^s$, $f(\psi)=|\psi|^{p}\psi$ with $\frac{4s}{N}<p<\frac{4s}{N-2s}$, or $f(\psi)=(|x|^{-\gamma}\ast|\psi|^2)\psi$ with $2s<\gamma<\min\{N,4s\}$.
Then, $\mathcal{A}_{\|\psi_0\|_{L^2}^2}$ and $\mathcal{B}_{\|\psi_0\|_{L^2}^2}$ are two invariant manifolds of \eqref{e}. More precisely, if $\psi_0\in \mathcal{A}_{\|\psi_0\|_{L^2}^2}$ or $\psi_0\in \mathcal{B}_{\|\psi_0\|_{L^2}^2}$, then the solution $\psi(t)$ satisfies $\psi(t)\in \mathcal{A}_{\|\psi_0\|_{L^2}^2}$ or $\psi(t)\in \mathcal{B}_{\|\psi_0\|_{L^2}^2}$ for any $t\in [0,T^*)$, respectively. Moreover, we can obtain the following sharp threshold of global
existence and blow-up for \eqref{e}.

(1) If $\psi_0\in \mathcal{A}_{\|\psi_0\|_{L^2}^2}$, then the solution $\psi(t)$ of \eqref{e} with initial data $\psi_0$ exists globally in time.

(2) When $f(\psi)=|\psi|^{p}\psi$, assume further that $p<4s$, $\psi_0 \in \mathcal{B}_{\|\psi_0\|_{L^2}^2}$ and $\psi_0$ is radial, then the solution $\psi(t)$ of \eqref{e} blows up in finite time.

(3) When $f(\psi)=(|x|^{-\gamma}\ast|\psi|^2)\psi$, assume further that $x\psi_0\in H^{s_0}$ with $s_0=\max\{2s,\frac{\gamma+1}{2}\}$, $x\psi_0\in L^2$, $x\cdot\nabla \psi_0\in L^2$, $\psi_0 \in \mathcal{B}_{\|\psi_0\|_{L^2}^2}$ and $\psi_0$ is radial, then the solution $\psi(t)$ of \eqref{e} blows up in finite time.
\end{theorem}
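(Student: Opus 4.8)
\emph{Plan.} The plan is to follow the classical global-existence-versus-blow-up scheme, resting on the conservation of $\|\psi(t)\|_{L^2}$ and $E(\psi(t))$ and the blow-up alternative in Proposition~\ref{proposition radial lwp}, together with the identity $m(c)=\inf_{V(c)}E$ and the fact that $\mathcal M_c\neq\emptyset$ from Theorems~\ref{theorem existence}--\ref{theorem ground states}. The algebraic input is the $L^2$-invariant scaling $u_\lambda(x):=\lambda^{N/2}u(\lambda x)$, for which $Q(u)=\frac{d}{d\lambda}\big|_{\lambda=1}E(u_\lambda)$ and, in the power case,
\begin{equation*}
E(u)=\Bigl(\tfrac12-\tfrac{2s}{Np}\Bigr)\|u\|_{\dot{H}^s}^2+\tfrac{2}{Np}\,Q(u),\qquad Q(u)=\tfrac{Np}{2}\,E(u)-\tfrac{Np-4s}{4}\,\|u\|_{\dot{H}^s}^2,
\end{equation*}
the Hartree analogues being obtained by replacing $Np/2$ with $\gamma$; since $Np>4s$ (resp.\ $\gamma>2s$) the coefficient $\tfrac12-\tfrac{2s}{Np}$ (resp.\ $\tfrac12-\tfrac s\gamma$) is strictly positive. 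I will also use that $m(c)>0$ and that $E=(\tfrac12-\tfrac{2s}{Np})\|\cdot\|_{\dot{H}^s}^2$ on $V(c)$, so that $\rho_c^2:=\inf_{V(c)}\|u\|_{\dot{H}^s}^2=\tfrac{2Np\,m(c)}{Np-4s}$ and $m(c)=(\tfrac12-\tfrac{2s}{Np})\rho_c^2$. \emph{Invariance} of $\mathcal A_c,\mathcal B_c$ is then immediate: $t\mapsto Q(\psi(t))$ is continuous while $\psi(t)\in S(\|\psi_0\|_{L^2}^2)$ and $E(\psi(t))=E(\psi_0)<m(c)$ for all $t$; if $Q(\psi(t_0))=0$ for some $t_0$ then $\psi(t_0)\in V(\|\psi_0\|_{L^2}^2)$, forcing $E(\psi(t_0))\ge m(c)$, a contradiction, so $Q(\psi(t))$ keeps its sign. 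Part~(1) follows at once: if $\psi_0\in\mathcal A_c$ then $Q(\psi(t))>0$ for all $t$, hence $(\tfrac12-\tfrac{2s}{Np})\|\psi(t)\|_{\dot{H}^s}^2=E(\psi_0)-\tfrac{2}{Np}Q(\psi(t))\le E(\psi_0)$, which with mass conservation bounds $\|\psi(t)\|_{H^s}$ uniformly on $[0,T^*)$ and gives $T^*=\infty$ (the Hartree case being identical).

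For parts~(2)--(3) let $\psi_0\in\mathcal B_c$. The heart of the matter is a \emph{rigidity} estimate: there is $\varepsilon>0$ with $\|\psi(t)\|_{\dot{H}^s}^2\ge(1+\varepsilon)\rho_c^2$ for all $t$, hence, by $E(\psi_0)<m(c)$ and the second identity above, there is $c_0>0$ with $Q(\psi(t))\le-c_0\|\psi(t)\|_{\dot{H}^s}^2$ for all $t\in[0,T^*)$. To prove it, fix $t$, put $u=\psi(t)$ and $g(\lambda):=E(u_\lambda)$; since $u\in S(c)$ is nonzero, $g$ has a unique maximizer $\lambda^*\in(0,\infty)$, and $Q(u)=g'(1)<0$ forces $\lambda^*\in(0,1)$, while $Q(u_{\lambda^*})=\lambda^*g'(\lambda^*)=0$ gives $u_{\lambda^*}\in V(c)$, so $\|u\|_{\dot{H}^s}^2=(\lambda^*)^{-2s}\|u_{\lambda^*}\|_{\dot{H}^s}^2>\rho_c^2$ --- the pointwise bound. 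If it were not uniform, there would be $t_n$ with $\|\psi(t_n)\|_{\dot{H}^s}^2\to\rho_c^2$; writing $\lambda_n^*$ for the maximizer of $g_n(\lambda):=E((\psi(t_n))_\lambda)$, from $(\psi(t_n))_{\lambda_n^*}\in V(c)$ we get $m(c)\le E((\psi(t_n))_{\lambda_n^*})=(\tfrac12-\tfrac{2s}{Np})(\lambda_n^*)^{2s}\|\psi(t_n)\|_{\dot{H}^s}^2\le(\tfrac12-\tfrac{2s}{Np})\|\psi(t_n)\|_{\dot{H}^s}^2\to m(c)$, whence $\lambda_n^*\to1$; since $\psi(t_n)$ is bounded in $\dot{H}^s$ (hence, by energy conservation, in $L^{p+2}$), $\lambda\mapsto Q((\psi(t_n))_\lambda)$ is uniformly bounded for $\lambda$ near $1$, so $E(\psi_0)=g_n(1)=g_n(\lambda_n^*)+o(1)\to m(c)$, contradicting $E(\psi_0)<m(c)$. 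The comparison with $m(c)$ relies on the Brezis--Lieb compactness already used in the proof of Theorem~\ref{theorem existence}.

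The second step is the virial argument. Under the stated hypotheses ($N\ge2$, $\tfrac{N}{2N-1}\le s<1$, $\psi_0$ radial, and $p<4s$ in the power case) the localized virial functional $M_{\varphi_R}(\psi)$ built from a radial cutoff $\varphi_R(x)=R^2\varphi(x/R)$ is well defined and, in the spirit of Boulenger--Himmelsbach--Lenzmann, satisfies
\begin{equation*}
\tfrac{d}{dt}M_{\varphi_R}(\psi(t))\le 4\,Q(\psi(t))+C\,R^{-2s}\bigl(1+\|\psi(t)\|_{\dot{H}^s}^{\alpha}\bigr)
\end{equation*}
for some $\alpha\in(0,2)$ and $C$ independent of $R,t$, the restriction $s\ge\tfrac{N}{2N-1}$ being exactly what makes $\alpha<2$ through the radial (Strauss-type) estimates. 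Inserting $Q(\psi(t))\le-c_0\|\psi(t)\|_{\dot{H}^s}^2$ and absorbing the error by Young's inequality (licit since $\alpha<2$) gives $\tfrac{d}{dt}M_{\varphi_R}(\psi(t))\le-c_1\|\psi(t)\|_{\dot{H}^s}^2\le-c_1(1+\varepsilon)\rho_c^2<0$ for $R$ large and fixed; since $|M_{\varphi_R}(\psi(t))|\le C_R\|\psi(t)\|_{\dot{H}^s}$ by mass conservation, $M_{\varphi_R}(\psi(t))$ reaches $-\infty$ in finite time (once negative it obeys $\tfrac{d}{dt}M_{\varphi_R}\le-c_2R^{-2}M_{\varphi_R}^2$), so $T^*<\infty$. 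For the Hartree nonlinearity the extra assumptions $x\psi_0\in H^{s_0}$, $x\psi_0\in L^2$, $x\cdot\nabla\psi_0\in L^2$ propagate the weighted norm and allow one to work directly with $J(t):=\int_{\mathbb R^N}|x|^2|\psi(t,x)|^2\,dx$, for which $J''(t)=8\,Q(\psi(t))\le-8c_0\|\psi(t)\|_{\dot{H}^s}^2<0$; since $J\ge0$ this again forces $T^*<\infty$.

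The main obstacle is the virial estimate: establishing the localized identity and, above all, the gain $\alpha<2$ in the error term for the nonlocal operator $(-\Delta)^s$ under radial symmetry --- this is where all the structural restrictions ($N\ge2$, $\tfrac{N}{2N-1}\le s$, $p<4s$, and the moment conditions in the Hartree case) are consumed. A secondary difficulty is the rigidity estimate, where the pointwise bound $\|\psi(t)\|_{\dot{H}^s}^2>\rho_c^2$ must be upgraded to a uniform one; this is the only place where the strict inequality $E(\psi_0)<m(c)$ and the compactness behind Theorem~\ref{theorem existence} are genuinely used.
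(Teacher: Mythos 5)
Your treatment of the invariance statement, of part (1), and of the power case (2) is essentially sound and follows the paper's overall scheme (conservation laws, the variational characterization $m(c)=\inf_{V(c)}E$, and the localized radial virial estimate of Lemma \ref{lemma Hs radial virial estimate} with $p<4s$ used to make the error exponent $\frac{p}{2s}+\varepsilon<2$ and absorb it by Young). The genuinely different ingredient is your rigidity step: you get the coercive bound $Q(\psi(t))\le -c_0\|\psi(t)\|^2_{\dot H^s}$ directly from the fibering map $\lambda\mapsto E(u_\lambda)$ and $\rho_c^2=\inf_{V(c)}\|\cdot\|^2_{\dot H^s}=\tfrac{2Np\,m(c)}{Np-4s}$, whereas the paper uses the relaxed problem $\widetilde m(c)=m(c)$ on $\{u\in S(c):Q(u)\le 0\}$ to obtain only the constant bound \eqref{Qxiaoyuling}, $Q(\psi(t))\le 2s(E(\psi_0)-m(c))<0$, and then a two-case analysis on whether $\|\psi(t)\|^2_{\dot H^s}$ exceeds a threshold $\mu$ to reach $\frac{d}{dt}M_{\varphi_R}\le -C_1\|\psi(t)\|^2_{\dot H^s}$. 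Your route is legitimate and arguably cleaner (note that it uses no compactness at all, so the appeal to Brezis--Lieb is superfluous). One slip in the endgame: the bound $|M_{\varphi_R}(\psi(t))|\le C_R\|\psi(t)\|_{\dot H^s}$ is false for $s<1$; Lemma \ref{lemma various estimate 1} together with the interpolation $\||\nabla|^{1/2}\psi\|_{L^2}\le\|\psi\|_{L^2}^{1-\frac{1}{2s}}\|\psi\|_{\dot H^s}^{\frac{1}{2s}}$ (licit since $s>\tfrac12$) gives $|M_{\varphi_R}(\psi(t))|\lesssim\|\psi(t)\|_{\dot H^s}^{1/s}$, so the differential inequality you can actually close is $\frac{d}{dt}M_{\varphi_R}\le -c\,|M_{\varphi_R}|^{2s}$ rather than $-c\,M_{\varphi_R}^2$. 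Since $2s>1$ this still drives $M_{\varphi_R}$ to $-\infty$ in finite time, so the conclusion of part (2) survives, but the inequality as you wrote it does not.

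Part (3), however, contains a genuine gap: the identity $\frac{d^2}{dt^2}\int_{\mathbb R^N}|x|^2|\psi(t,x)|^2\,dx=8\,Q(\psi(t))$ is the classical $s=1$ virial identity and fails for $0<s<1$; for the fractional Laplacian $|\psi|^2$ does not obey a local continuity equation, and this failure is exactly the obstruction highlighted in the paper's Remark 2 and the reason for the weighted hypotheses on $\psi_0$. The paper instead works with the generalized variance $\int_{\mathbb R^N}\bar\psi\,x(-\Delta)^{1-s}x\,\psi\,dx$, which, under $x\psi_0\in H^{s_0}$, $x\psi_0\in L^2$, $x\cdot\nabla\psi_0\in L^2$ (by \cite{blowup-Har,zhu16jde}), is finite, non-negative, and satisfies $\frac{d^2}{dt^2}\int\bar\psi\,x(-\Delta)^{1-s}x\,\psi\,dx\le 2\,Q(\psi(t))\le 4s\bigl(E(\psi_0)-m(\|\psi_0\|^2_{L^2})\bigr)<0$; hence this quantity must vanish at some finite time, and the uncertainty-type inequality $\|u\|^2_{L^2}\le\tfrac{2}{N}\bigl(\int\bar u\,x(-\Delta)^{1-s}x\,u\,dx\bigr)^{1/2}\|u\|_{\dot H^s}$ together with mass conservation then forces $\|\psi(t)\|_{\dot H^s}\to\infty$. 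Your argument for (3) needs to be rebuilt on this functional (or on a Hartree analogue of the localized radial virial estimate); as written, the key identity it rests on is false.
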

\textbf{Remark 1.} Note that the condition $p<4s$ is technical due to the localized virial estimate, see Lemma \ref{lemma Hs radial virial estimate}. However, this only leads to a restriction in the two dimensional case. Indeed, for $N\geq 3$ and $2s<2$, we have $p<\frac{4s}{N-2s}<4s$.

\textbf{Remark 2.}
For the classical NLS, i.e., $s=1$ in \eqref{e}, it follows from  the virial identity and \eqref{Qxiaoyuling} with $s=1$ that
\[
\frac{d^2}{dt^2}\|x\psi(t)\|_{L^2}^2=4Q(\psi(t))\leq 8(E(\psi_0)-m(\|\psi_0\|_{L^2}^2))<0,
\]
where $Q(\psi(t))$ is defined by \eqref{qupower} or \eqref{quhartree} with $s=1$.
This implies that the solution $\psi(t)$ of \eqref{e} with $s=1$ blows up in finite time.

But for the fractional NLS \eqref{e} with $f(\psi)=|\psi|^{p}\psi$, it follows from Lemma \ref{lemma Hs radial virial estimate} and \eqref{Qxiaoyuling} that
\begin{align*}\label{local}
		\frac{d}{dt} M_{\varphi_R}(\psi(t))&\leq 4Q(\psi(t))+ C\eta\|\psi(t)\|^2_{\dot{H}^s}+\circ_R(1)\\&\leq 8s(E(\psi_0)-m(\|\psi_0\|_{L^2}^2))+ C\eta\|\psi(t)\|^2_{\dot{H}^s}+\circ_R(1)\\&\leq C\eta\|\psi(t)\|^2_{\dot{H}^s}+\circ_R(1),
		\end{align*}
where $\eta>0$, $\circ_R(1)\rightarrow 0$ as $R\rightarrow \infty$, $\|\psi(t)\|^2_{\dot{H}^s}$ may be unbounded.
Therefore, there exist some essential difficulties in proving Theorem \ref{theorem sharp} between the fractional NLS and the classical NLS.
In this paper, we will develop some new ideas to solve these problems.

Notice that $\mathcal{B}_c$ contains functions arbitrary close to $u_c$ in $H^s$. Indeed, letting $u_c^\lambda(x)=\lambda^{N/2}u_c(\lambda x)$ with $\lambda>1$, it easily follows that $u_c^\lambda \in \mathcal{B}_c$ and $u_c^\lambda \rightarrow u_c$ as $\lambda \rightarrow 1$.
Therefore, as an immediate corollary of Theorem \ref{theorem sharp}, we can derive
the strong instability of normalized ground states to \eqref{e}.
\begin{corollary}
Let $N\geq 2$, $\frac{N}{2N-1} \leq s <1$, $c>0$, $f(u)=|u|^{p}u$ with $\frac{4s}{N}<p<\frac{4s}{N-2s}$ and $p<4s$. Assume that $u_c\in \mathcal{M}_c$, the standing wave $\psi(t,x)=e^{i\omega_c t}u_c(x)$ is
strongly unstable in the following sense: there exists $\{\psi_{0,n}\}\subset H^s$ such that
$\psi_{0,n}\rightarrow u$ in $H^s$ as $n \rightarrow \infty$ and the corresponding solution $\psi_n$ of \eqref{e} with initial data $\psi_{0,n}$
blows up in finite time for any $n\geq 1$.
\end{corollary}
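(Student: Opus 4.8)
The plan is to deduce the corollary directly from Theorem \ref{theorem sharp}, using the scaling family $u_c^\lambda(x)=\lambda^{N/2}u_c(\lambda x)$ already highlighted in the excerpt. First I would fix $u_c\in\mathcal{M}_c$ and, for $\lambda>1$, set $\psi_{0,\lambda}:=u_c^\lambda$. Since the $L^2$-scaling is $L^2$-norm-preserving, $\|\psi_{0,\lambda}\|_{L^2}^2=\|u_c\|_{L^2}^2=c$, so $\psi_{0,\lambda}\in S(c)$ and the relevant invariant sets are $\mathcal{A}_c,\mathcal{B}_c$ with the \emph{same} $c$. The next step is to verify the two defining inequalities of $\mathcal{B}_c$ for $\psi_{0,\lambda}$ when $\lambda>1$ is close to $1$: namely $Q(\psi_{0,\lambda})<0$ and $E(\psi_{0,\lambda})<m(c)$. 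For this I would introduce the function $g(\lambda):=E(u_c^\lambda)$; a direct computation using $\|u_c^\lambda\|_{\dot H^s}^2=\lambda^{2s}\|u_c\|_{\dot H^s}^2$ and, in the power case, $\|u_c^\lambda\|_{L^{p+2}}^{p+2}=\lambda^{Np/2}\|u_c\|_{L^{p+2}}^{p+2}$, gives $g(\lambda)=\tfrac{\lambda^{2s}}{2}\|u_c\|_{\dot H^s}^2-\tfrac{\lambda^{Np/2}}{p+2}\|u_c\|_{L^{p+2}}^{p+2}$, and one checks $\lambda g'(\lambda)=Q(u_c^\lambda)$. Since $u_c\in V(c)$ we have $Q(u_c)=g'(1)=0$, and because $p>\tfrac{4s}{N}$ (so $Np/2>2s$) the function $g$ has a strict interior maximum at $\lambda=1$ on a neighborhood; hence $g'(\lambda)<0$ and $g(\lambda)<g(1)=E(u_c)=m(c)$ for $\lambda>1$ near $1$. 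That yields $Q(\psi_{0,\lambda})=\lambda g'(\lambda)<0$ and $E(\psi_{0,\lambda})<m(c)$, i.e. $\psi_{0,\lambda}\in\mathcal{B}_c$. (In the Hartree case the computation is identical with $\lambda^{Np/2}$ replaced by $\lambda^{\gamma}$ and $\gamma>2s$ playing the role of the supercriticality exponent.)

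Having placed the initial data in $\mathcal{B}_c$, I would invoke part (2) of Theorem \ref{theorem sharp}. Here I must also ensure the hypotheses of that part are met: the standing assumptions $N\ge 2$, $\tfrac{N}{2N-1}\le s<1$, $\tfrac{4s}{N}<p<\tfrac{4s}{N-2s}$ and $p<4s$ are exactly those imposed in the corollary, so they carry over verbatim. The only additional requirement in part (2) is that the initial datum be radial; since $u_c$ is a minimizer in $\mathcal{M}_c$ and (by the rearrangement argument used in the proof of Theorem \ref{theorem existence}) may be taken radially symmetric and nonnegative, the dilation $\psi_{0,\lambda}=u_c^\lambda$ inherits radial symmetry. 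Therefore the solution $\psi_\lambda(t)$ of \eqref{e} with datum $\psi_{0,\lambda}$ blows up in finite time for every $\lambda>1$ in that neighborhood of $1$.

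Finally, to package this as instability I would take any sequence $\lambda_n\downarrow 1$, set $\psi_{0,n}:=u_c^{\lambda_n}$, and note that $u_c^{\lambda_n}\to u_c$ in $H^s$ as $\lambda_n\to 1$: indeed $\|u_c^{\lambda}-u_c\|_{L^2}\to 0$ and $\|u_c^{\lambda}-u_c\|_{\dot H^s}^2=(\lambda^{2s}-2\lambda^{2s}\langle\cdot\rangle+1)$-type expansion tends to $0$ by continuity of dilations on $\dot H^s$ together with $\|u_c^\lambda\|_{\dot H^s}=\lambda^s\|u_c\|_{\dot H^s}\to\|u_c\|_{\dot H^s}$. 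Thus $\psi_{0,n}\to u_c$ in $H^s$ while each corresponding solution blows up in finite time, which is precisely the asserted strong instability; moreover since $e^{i\omega_c t}u_c$ is a standing wave this shows that standing wave is strongly unstable.

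The routine parts are the scaling identities and the continuity of dilations; the one point that genuinely relies on heavy machinery is the blow-up conclusion, which is \emph{not} re-proved here but imported wholesale from part (2) of Theorem \ref{theorem sharp}. The main obstacle, then, is not in this corollary at all but in checking that its stated hypotheses are a subset of those of Theorem \ref{theorem sharp} and that radial symmetry of the ground state is available; once those are in hand the argument is a short scaling-and-limit deduction.
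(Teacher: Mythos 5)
Your overall strategy is exactly the paper's: scale $u_c^\lambda(x)=\lambda^{N/2}u_c(\lambda x)$ with $\lambda>1$, check via $\lambda g'(\lambda)=Q(u_c^\lambda)$ and $g(\lambda)<g(1)=m(c)$ that $u_c^\lambda\in\mathcal{B}_c$, note $u_c^\lambda\to u_c$ in $H^s$ as $\lambda\to 1^+$, and then import the blow-up statement from part (2) of Theorem \ref{theorem sharp}. That part of your argument is fine and coincides with the paper's remark preceding the corollary.

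The genuine gap is your treatment of the radiality hypothesis in Theorem \ref{theorem sharp}(2). You write that $u_c$ ``may be taken radially symmetric by the rearrangement argument used in the proof of Theorem \ref{theorem existence}''; but no rearrangement is used there (the existence proof relies on the translation-compactness Lemma \ref{lemma compactness lemma I}), and, more importantly, the corollary asserts instability for an \emph{arbitrary} given $u_c\in\mathcal{M}_c$, so you are not free to replace $u_c$ by some radial minimizer: a Schwarz-symmetrization argument would at best produce \emph{a} radial element of $\mathcal{M}_c$, not show that your $u_c$ (and hence the data $u_c^\lambda$ approximating it) is radial, and without radial data Theorem \ref{theorem sharp}(2) does not apply. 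The paper closes this gap differently: by Theorem \ref{theorem ground states}, $u_c$ is a ground state of \eqref{elliptic equation} with $\omega=\omega_c$, and for $f(u)=|u|^p u$ the ground state is unique up to translation (and phase) and is radial \cite{uniqueness1,uniqueness2}; hence $u_c(\cdot)=e^{i\theta}R_{\omega_c}(\cdot-x_0)$ with $R_{\omega_c}$ radial, and $u_c^\lambda$ is a translate of a radial function, so by the spatial translation (and phase) invariance of \eqref{e} the blow-up conclusion for the radial profile transfers to the solution with data $u_c^\lambda$. This is also precisely why the Hartree corollary is stated only for \emph{radial} $u_c$: there uniqueness is unknown and the reduction fails. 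You need to add this uniqueness-plus-translation-invariance step (or an equivalent substitute) to make the invocation of Theorem \ref{theorem sharp}(2) legitimate.
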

\begin{corollary}
Let $N\geq 2$, $\frac{N}{2N-1} \leq s <1$, $c>0$, $f(u)=(|x|^{-\gamma}\ast|u|^2)u$ with $2s<\gamma<\min\{N,4s\}$. Then for any $u_c\in \mathcal{M}_c$ radial, the standing wave $\psi(t,x)=e^{i\omega_c t}u_c(x)$ is
strongly unstable in the following sense: there exists $\{\psi_{0,n}\}\subset H^s$ such that
$\psi_{0,n}\rightarrow u$ in $H^s$ as $n \rightarrow \infty$ and the corresponding solution $\psi_n$ of \eqref{e} with initial data $\psi_{0,n}$
blows up in finite time for any $n\geq 1$.
\end{corollary}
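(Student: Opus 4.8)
The plan is to produce the blow-up sequence by dilation. Given the radial ground state $u_c\in\mathcal{M}_c$ provided by the corollary's hypothesis, I would set $u_c^\lambda(x):=\lambda^{N/2}u_c(\lambda x)$ for $\lambda>1$. These functions all have mass $c$, they converge to $u_c$ in $H^s$ as $\lambda\to1^+$ (dilations act strongly continuously on $\dot{H}^s$, as already noted after Theorem \ref{theorem sharp}), and — as I will check — they lie in $\mathcal{B}_c$ and satisfy every structural hypothesis of Theorem \ref{theorem sharp}(3). Granting this, Theorem \ref{theorem sharp}(3) forces the solution of \eqref{e} issued from $u_c^\lambda$ to blow up in finite time, so choosing any sequence $\lambda_n\downarrow1$ and putting $\psi_{0,n}:=u_c^{\lambda_n}$ yields the asserted strong instability.

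For the membership $u_c^\lambda\in\mathcal{B}_c$ I would argue exactly as in the power case. Using $\|u_c^\lambda\|_{\dot{H}^s}^2=\lambda^{2s}\|u_c\|_{\dot{H}^s}^2$ and $\int(|x|^{-\gamma}\ast|u_c^\lambda|^2)|u_c^\lambda|^2\,dx=\lambda^{\gamma}\int(|x|^{-\gamma}\ast|u_c|^2)|u_c|^2\,dx$, introduce
\begin{equation*}
g(\lambda):=E(u_c^\lambda)=\tfrac12\lambda^{2s}\|u_c\|_{\dot{H}^s}^2-\tfrac14\lambda^{\gamma}\!\int(|x|^{-\gamma}\ast|u_c|^2)|u_c|^2\,dx .
\end{equation*}
Then $g'(\lambda)=\lambda^{-1}Q(u_c^\lambda)$, and since $u_c\in V(c)$ forces $Q(u_c)=0$ we get $g'(1)=0$. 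Because $\gamma>2s$, $g$ has the shape $A\lambda^{2s}-B\lambda^{\gamma}$ with $A,B>0$, so $\lambda=1$ is its only critical point on $(0,\infty)$; moreover $g''(1)=\tfrac14\gamma(2s-\gamma)\int(|x|^{-\gamma}\ast|u_c|^2)|u_c|^2\,dx<0$, so $\lambda=1$ is the strict global maximum. Hence for every $\lambda>1$ one has $E(u_c^\lambda)=g(\lambda)<g(1)=E(u_c)=m(c)$ (the last equality because $u_c\in\mathcal{M}_c$) and $Q(u_c^\lambda)=\lambda g'(\lambda)<0$, that is, $u_c^\lambda\in\mathcal{B}_c$.

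Next I would verify the weighted hypotheses. By Theorem \ref{theorem ground states}, $u_c$ is a ground state of \eqref{elliptic equation} with a frequency $\omega_c>0$, so by the regularity and decay theory for fractional Hartree ground states (see \cite{bou,dinhinstability,saanounijmp}) $u_c$ is smooth and satisfies a polynomial decay estimate of the type $|u_c(x)|+|x|\,|\nabla u_c(x)|\lesssim\langle x\rangle^{-(N+2s)}$; a direct integration then gives $xu_c\in L^2$ and $x\cdot\nabla u_c\in L^2$, while interpolating this decay against the smoothness yields $xu_c\in H^{s_0}$ with $s_0=\max\{2s,\frac{\gamma+1}{2}\}$. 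Each of these weighted quantities transforms under $u\mapsto u^\lambda$ by a mere power of $\lambda$, and radiality is obviously preserved, so $u_c^\lambda$ meets every hypothesis of Theorem \ref{theorem sharp}(3) with $\|u_c^\lambda\|_{L^2}^2=c$. Combining this with the previous step and letting $\lambda_n\downarrow1$ completes the argument.

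The hard part is this last verification: unlike the local equation ($s=1$), where ground states decay exponentially and feeding them into the virial identity is immediate, fractional ground states decay only polynomially, so one must be sure the decay rate is fast enough to guarantee $xu_c\in H^{s_0}\cap L^2$ and $x\cdot\nabla u_c\in L^2$ — precisely the admissibility needed for the localized virial identity behind Theorem \ref{theorem sharp}(3). Everything else reduces to the elementary scaling computation for $g(\lambda)$ above, together with the strong $H^s$-continuity of the dilation family at $\lambda=1$.
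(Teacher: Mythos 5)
Your proposal is correct and follows essentially the same route as the paper: the paper also produces the blow-up data by the dilation $u_c^\lambda(x)=\lambda^{N/2}u_c(\lambda x)$ with $\lambda>1$, notes via the scaling of $Q$ and $E$ (using $Q(u_c)=0$, $E(u_c)=m(c)$) that $u_c^\lambda\in\mathcal{B}_c$ and $u_c^\lambda\to u_c$ in $H^s$, and then invokes Theorem \ref{theorem sharp}(3). The only difference is that you additionally spell out the verification of the weighted hypotheses $xu_c\in H^{s_0}\cap L^2$, $x\cdot\nabla u_c\in L^2$ via the known decay of fractional ground states, which the paper leaves implicit in calling the result an immediate corollary.
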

\textbf{Remark.} It is well-known that the ground state solution $u_\omega$ of \eqref{elliptic equation} with $f(u)=|u|^{p}u$ is unique up to translation, see \cite{uniqueness1,uniqueness2}. Based on this fact and the translation invariance of \eqref{e}, we can prove that for every $u_c\in \mathcal{M}_c$, the ground state standing wave $\psi(t,x)=e^{i\omega t}u_\omega(x)$ is strongly unstable.
But, to the best of our knowledge, the uniqueness of ground state solution $u_\omega$ of \eqref{elliptic equation} with $f(u)=(|x|^{-\gamma}\ast|u|^2)u$ is unknown, so we only prove the instability of radial normalized ground states.

This paper is organized as follows: in Section 2, we firstly collect
some lemmas such as the local well-posedness theory of \eqref{e}, Brezis-Lieb's lemma, a compactness lemma, a sharp Gagliardo-Nirenberg type inequality and the localized virial estimate related to \eqref{e}. In section 3, 4 and 5, we will prove Theorems \ref{theorem existence},  \ref{theorem ground states} and \ref{theorem sharp} respectively.

{\textbf {Notations.}} Throughout this paper, we use the following
notations. $C> 0$ will stand for a constant that may be different from
line to line when it does not cause any confusion.  For any $s
\in(0,1)$, the fractional Sobolev space $H^s(\mathbb{R}^N)$ is defined by
\[
H^s(\mathbb{R}^N)=\left\{u\in
L^2(\mathbb{R}^N);~~\int_{\mathbb{R}^N}(1+|\xi|^{2s})|\hat{u}(\xi)|^2d\xi<\infty
\right\},
\]
endowed with the norm
\[
\|u\|_{H^s(\mathbb{R}^N)}=\|u\|_{L^2(\mathbb{R}^N)}+\|u\|_{\dot{H}^s(\mathbb{R}^N)},
\]
where up to a multiplicative constant
\[
\|u\|_{\dot{H}^s(\mathbb{R}^N)}=\left\{\int\!\!\!\!\int_{\mathbb{R}^N\times
\mathbb{R}^N
}\frac{|u(x)-u(y)|^2}{|x-y|^{N+2\alpha}}dxdy\right\}^{\frac{1}{2}}
\]
is the so-called Gagliardo semi-norm of $u$.
In this paper, we often use the abbreviations
$L^{r}=L^{r}(\mathbb{R}^N)$, $H^s=H^s(\mathbb{R}^N)$.

\section{Preliminaries}
In this section, we recall some preliminary results that
will be used later.
Firstly, let us recall the local theory for the Cauchy problem \eqref{e}.
The local well-posedness for \eqref{e} in the energy space $H^s$ was first studied by Hong and Sire in \cite{hong}. The proof is based on Strichartz estimates and the contraction mapping argument. Note that
for non-radial data, Strichartz estimates have a loss of derivatives. Fortunately, this loss of derivatives can
be compensated by using Sobolev embedding. However, it leads to a weak local well-posedness in the
energy space compared to the classical nonlinear Schr\"{o}dinger equation. We refer the reader
to \cite{dinhlocal,hong} for more details. One can remove the loss of derivatives in Strichartz estimates by considering
radially symmetric data. However, it needs a restriction on the validity of $s$, namely $\frac{N}{2N-1} \leq s <1$. More
precisely, we have the following local well-posedness for \eqref{e} with radial $H^s$ initial data established in \cite{f19jmp}.
	\begin{proposition}[Radial $H^s$ LWP] \label{proposition radial lwp}
		Let $N\geq 2$, $\frac{N}{2N-1} \leq s <1$, $f(\psi)=|\psi|^{p}\psi$ with $\frac{4s}{N}<p<\frac{4s}{N-2s}$ or $f(\psi)=(|x|^{-\gamma}\ast|\psi|^2)\psi$ with $2s<\gamma<\min\{N,4s\}$.
		Then for any $\psi_0 \in H^s$ radial, there exist $T^*\in(0,+\infty]$ and a unique solution to $(\ref{e})$ satisfying $\psi \in C([0,T^*), H^s)$.
		Moreover, the following properties hold:
		\begin{itemize}
			\item $\psi \in L^a_{\emph{loc}}([0,T^*), W^{s,b})$ for any fractional admissible pair $(a,b)$.
			\item If $T^*<+\infty$, then $\|\psi(t)\|_{\dot{H}^s} \rightarrow \infty$ as $t\uparrow T^*$.
			\item The solution $\psi(t)$ enjoys the following conservations of mass and energy, i.e., for all $t\in [0,T^*)$			
		\end{itemize}
\begin{equation}\label{masscon}
\|\psi(t)\|_{L^2}=\|\psi_0\|_{L^2},
\end{equation}
\begin{equation}\label{energycon}
E(\psi(t))=E(\psi_0 ),
\end{equation}
 where $E(\psi(t))$ is defined by \eqref{energy power} or \eqref{energy Hartree}.
\end{proposition}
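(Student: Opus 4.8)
The plan is to prove Proposition \ref{proposition radial lwp} by a fixed point argument for the Duhamel formulation
\[
\psi(t)=e^{-it(-\Delta)^s}\psi_0+i\int_0^t e^{-i(t-\tau)(-\Delta)^s}f(\psi(\tau))\,d\tau
\]
carried out in a space of radial functions. The crucial ingredient is the family of Strichartz estimates for the fractional group $e^{-it(-\Delta)^s}$: for general data these hold only with a loss of derivatives coming from the degenerate dispersion of the symbol $|\xi|^{2s}$, but on radial functions, and precisely when $N\ge 2$ and $\frac{N}{2N-1}\le s<1$, one recovers the loss-free estimates $\|e^{-it(-\Delta)^s}\varphi\|_{L^a_tL^b_x}\lesssim\|\varphi\|_{L^2}$ together with the corresponding inhomogeneous estimate for every fractional admissible pair $(a,b)$ (that is $\frac{2s}{a}+\frac{N}{b}=\frac{N}{2}$, $2\le b<\frac{2N}{N-2s}$), as established in \cite{dinhlocal,f19jmp} and the radial dispersive estimates used there. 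I would take these as given.

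With these estimates in hand I would fix a radial $\psi_0\in H^s$, choose a fractional admissible pair $(a,b)$ adapted to the nonlinearity, and, for small $T>0$ and a suitable radius $\rho$, run the contraction mapping principle on
\[
X_{T,\rho}=\Bigl\{\psi\in C([0,T],H^s)\cap L^a([0,T],W^{s,b})\ \text{radial}:\ \|\psi\|_{L^\infty_tH^s}+\|\psi\|_{L^a_tW^{s,b}}\le\rho\Bigr\},
\]
equipped with the weaker metric induced by $L^\infty_tL^2_x\cap L^a_tL^b_x$, so that in the contraction step one only needs the Lipschitz-type bound $\|f(\psi)-f(\varphi)\|_{L^{b'}_x}\lesssim(\|\psi\|^{p}+\|\varphi\|^{p})\|\psi-\varphi\|$ in suitable Lebesgue norms rather than differentiating the non-smooth nonlinearity. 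Working in the radial subspace is legitimate because $(-\Delta)^s$ commutes with rotations and the nonlinearities are rotation invariant, so the fixed point is automatically radial. The homogeneous term is controlled by Strichartz; the Duhamel term requires the nonlinear estimates. For $f(\psi)=|\psi|^p\psi$ one combines the fractional chain/Leibniz rule, Hölder in space and time, and the Sobolev embedding $W^{s,b}\hookrightarrow L^q$ to bound $\|\,|\psi|^p\psi\,\|_{L^{a'}_tW^{s,b'}}$ by $T^{\theta}\|\psi\|_{L^a_tW^{s,b}}^{p+1}$ with $\theta>0$; here the upper bound $p<\frac{4s}{N-2s}$ is exactly what keeps the relevant exponents below the embedding threshold and produces the positive power of $T$, the lower bound $\frac{4s}{N}<p$ being simply the regime of interest of the paper. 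For $f(\psi)=(|x|^{-\gamma}\ast|\psi|^2)\psi$ one additionally uses the Hardy--Littlewood--Sobolev inequality, and the restriction $2s<\gamma<\min\{N,4s\}$ places the resulting Lebesgue exponents in the admissible range and again leaves a gain in $T$. A standard argument then closes the contraction, yielding a unique radial solution $\psi\in C([0,T^*),H^s)$ with $\psi\in L^a_{\mathrm{loc}}([0,T^*),W^{s,b})$ for every admissible $(a,b)$ (by re-running the Strichartz estimates on the constructed solution).

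It remains to establish the blow-up alternative and the conservation laws. Since the mass is conserved, $\|\psi(t)\|_{H^s}\sim\|\psi_0\|_{L^2}+\|\psi(t)\|_{\dot H^s}$; because the existence time produced by the fixed point depends only on $\|\psi(t_0)\|_{H^s}$, the usual continuation argument shows that if $T^*<+\infty$ then $\|\psi(t)\|_{\dot H^s}\to\infty$ as $t\uparrow T^*$. For \eqref{masscon} and \eqref{energycon} I would first verify them for smooth decaying solutions by pairing \eqref{e} with $\bar\psi$ and with $\partial_t\bar\psi$ and using the self-adjointness of $(-\Delta)^s$ together with the gauge invariance of $f$, and then pass to the limit along a regularization of $\psi_0$, exploiting the continuous dependence that follows from the contraction estimate; alternatively one justifies these identities directly by a commutator/truncation argument, legitimate because $\psi$ lies in enough Strichartz spaces.

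The main obstacle is the very first step: unlike the local case $s=1$, the dispersive and Strichartz theory for $e^{-it(-\Delta)^s}$ is delicate, the fixed point scheme closes only because the derivative loss is removed by restricting to radial data, and it is precisely this radial smoothing mechanism that forces $N\ge2$ and $\frac{N}{2N-1}\le s<1$. Once the loss-free radial Strichartz estimates are available, the remaining steps---nonlinear estimates, continuation, conservation laws---are routine modulo careful bookkeeping of admissible exponents.
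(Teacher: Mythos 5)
Your proposal is correct and follows essentially the same route as the source the paper relies on: Proposition \ref{proposition radial lwp} is not proved in this paper but quoted from \cite{f19jmp} (building on \cite{hong,dinhlocal}), whose proof is exactly the scheme you describe — radial Strichartz estimates without loss of derivatives (which is where $N\geq 2$ and $\frac{N}{2N-1}\leq s<1$ enter), a contraction mapping argument in Strichartz spaces with Kato's weaker-metric trick, followed by the standard continuation criterion and conservation laws via regularization. No discrepancy to report, provided the loss-free radial Strichartz estimates and the fractional chain rule are taken as given, as you do.
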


In this paper,
we also need the so called Brezis-Lieb's lemma, see \cite{blieb,morjfa}.
\begin{lemma}\label{lemma Brezis Lieb}
 Let $0<p<\infty$. Suppose that $f_n\rightarrow f$ almost everywhere
and $\{f_n\}$ is a bounded sequence in $L^p$, then
\[
\lim_{n\rightarrow \infty}(\|f_n\|_{L^{p}}^{p}-\|f_n-f\|_{L^{p}}^{p}-\|f\|_{L^{p}}^{p})=0.
\]
\begin{align*}
\lim_{n\rightarrow \infty}
\left(\int_{\mathbb{R}^N}(|x|^{-\gamma}\ast|u_n|^2)|u_n|^2dx
-\int_{\mathbb{R}^N}(|x|^{-\gamma}\ast|u_n-u|^2)|u_n-u|^2dx\right)=\int_{\mathbb{R}^N}(|x|^{-\gamma}\ast|u|^2)|u|^2dx.
\end{align*}
\end{lemma}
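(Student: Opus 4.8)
The plan is to prove the two identities separately: the first is the classical Brezis--Lieb lemma, the second its bilinear version adapted to the Hartree convolution nonlinearity.

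For the first identity I would start from the elementary pointwise inequality: for each $\varepsilon>0$ there is $C_{\varepsilon}>0$ with
\[
\big|\,|a+b|^{p}-|a|^{p}\,\big|\le\varepsilon\,|a|^{p}+C_{\varepsilon}\,|b|^{p}\qquad\text{for all }a,b\in\mathbb{C},
\]
which follows by scaling from the continuity of $t\mapsto\big|\,|1+t|^{p}-1\,\big|$ at $t=0$. Taking $a=f_{n}-f$, $b=f$ and adding $|f|^{p}$ gives
\[
\big|\,|f_{n}|^{p}-|f_{n}-f|^{p}-|f|^{p}\,\big|\le\varepsilon\,|f_{n}-f|^{p}+(C_{\varepsilon}+1)\,|f|^{p}.
\]
I would then set $W_{n}^{\varepsilon}:=\big(\big|\,|f_{n}|^{p}-|f_{n}-f|^{p}-|f|^{p}\,\big|-\varepsilon\,|f_{n}-f|^{p}\big)^{+}$; since $f\in L^{p}$ (by Fatou's lemma together with the $L^{p}$-boundedness of $\{f_n\}$) one has $0\le W_{n}^{\varepsilon}\le(C_{\varepsilon}+1)\,|f|^{p}\in L^{1}$ and $W_{n}^{\varepsilon}\to0$ a.e., so $\int_{\mathbb{R}^{N}}W_{n}^{\varepsilon}\,dx\to0$ by dominated convergence. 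Since $\big|\,|f_{n}|^{p}-|f_{n}-f|^{p}-|f|^{p}\,\big|\le W_{n}^{\varepsilon}+\varepsilon\,|f_{n}-f|^{p}$, integration gives $\limsup_{n}\int_{\mathbb{R}^{N}}\big|\,|f_{n}|^{p}-|f_{n}-f|^{p}-|f|^{p}\,\big|\,dx\le\varepsilon\,\sup_{n}\|f_{n}-f\|_{L^{p}}^{p}$, and letting $\varepsilon\to0$ concludes (the finiteness of $\sup_{n}\|f_{n}-f\|_{L^{p}}^{p}$ being where the $L^p$-boundedness is used).

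For the Hartree identity, set $v_{n}:=u_{n}-u$, so $v_{n}\to0$ a.e.\ and (in the applications in this paper) $\{u_{n}\}$, hence $\{v_{n}\}$, is bounded in $H^{s}$, and therefore in $L^{q}$ with $q:=\tfrac{4N}{2N-\gamma}$, the embedding $H^{s}\hookrightarrow L^{q}$ being valid precisely because $2<q<\tfrac{2N}{N-2s}$ when $2s<\gamma<\min\{N,4s\}$. Write $D(F,G):=\int_{\mathbb{R}^{N}}(|x|^{-\gamma}\ast F)G\,dx$, a symmetric bilinear form which, by the Hardy--Littlewood--Sobolev inequality, is bounded on $L^{r}\times L^{r}$ with $r:=q/2=\tfrac{2N}{2N-\gamma}$, and for which $|x|^{-\gamma}\ast h\in L^{r'}$ whenever $h\in L^{r}$. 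Using $|u_{n}|^{2}=|v_{n}|^{2}+|u|^{2}+\phi_{n}$ with $\phi_{n}:=2\,\mathrm{Re}(v_{n}\bar{u})$, bilinearity yields
\[
\begin{aligned}
&D(|u_{n}|^{2},|u_{n}|^{2})-D(|v_{n}|^{2},|v_{n}|^{2})-D(|u|^{2},|u|^{2})\\
&\qquad=D(\phi_{n},\phi_{n})+2D(|v_{n}|^{2},|u|^{2})+2D(|v_{n}|^{2},\phi_{n})+2D(|u|^{2},\phi_{n}),
\end{aligned}
\]
and since $D(|v_{n}|^{2},|v_{n}|^{2})=\int_{\mathbb{R}^N}(|x|^{-\gamma}\ast|u_{n}-u|^{2})|u_{n}-u|^{2}\,dx$, it suffices to show that the right-hand side tends to $0$. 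The crucial step is the claim $\|\phi_{n}\|_{L^{r}}\to0$: for $\delta>0$ split $u=u_{\delta}^{1}+u_{\delta}^{2}$ with $u_{\delta}^{1}$ bounded and compactly supported and $\|u_{\delta}^{2}\|_{L^{q}}<\delta$; then $\|v_{n}\bar{u}_{\delta}^{2}\|_{L^{r}}\le\|v_{n}\|_{L^{q}}\,\delta$, whereas on the finite-measure support of $u_{\delta}^{1}$ the a.e.\ convergence $v_{n}\to0$ combined with the $L^{q}$-bound and $q>r$ gives, by the Vitali convergence theorem, $\|v_{n}\bar{u}_{\delta}^{1}\|_{L^{r}}\to0$; hence $\limsup_{n}\|\phi_{n}\|_{L^{r}}\le C\delta$ for every $\delta>0$. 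Granting this, $D(\phi_{n},\phi_{n})$, $D(|v_{n}|^{2},\phi_{n})$ and $D(|u|^{2},\phi_{n})$ are all $O(\|\phi_{n}\|_{L^{r}})\to0$ by the HLS bound and the uniform $L^{r}$-bounds on $|v_{n}|^{2}$ and $|u|^{2}$, while $D(|v_{n}|^{2},|u|^{2})=\int_{\mathbb{R}^N}(|x|^{-\gamma}\ast|u|^{2})|v_{n}|^{2}\,dx\to0$ because $|x|^{-\gamma}\ast|u|^{2}\in L^{r'}$ is fixed and $|v_{n}|^{2}\rightharpoonup0$ weakly in $L^{r}$ (a.e.\ convergence together with boundedness in the reflexive space $L^{r}$). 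Combining these limits yields the asserted identity.

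I expect the only genuine difficulty to lie in the Hartree part: keeping track of the Hardy--Littlewood--Sobolev exponents so that every bilinear term is finite (which is exactly where the range $2s<\gamma<\min\{N,4s\}$ is used), and proving $\|\mathrm{Re}(v_{n}\bar{u})\|_{L^{r}}\to0$, for which a.e.\ convergence by itself is insufficient and one must carry out the density/splitting argument above (equivalently, a uniform-integrability argument via Vitali). The first identity, by contrast, is the textbook Brezis--Lieb argument and should be routine once the elementary inequality is in hand.
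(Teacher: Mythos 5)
Your proof is correct. Note that the paper does not prove this lemma at all: it is simply recalled from \cite{blieb,morjfa}, and your argument is exactly the standard one in those sources --- the $\varepsilon$-refined pointwise inequality plus dominated convergence for the $L^p$ statement, and, for the Hartree statement, the bilinear expansion of $D(F,G)=\int_{\mathbb{R}^N}(|x|^{-\gamma}\ast F)G\,dx$ with Hardy--Littlewood--Sobolev bounds, the weak convergence $|u_n-u|^2\rightharpoonup 0$ in $L^{r}$, and a splitting/Vitali argument for the cross term $\mathrm{Re}\big((u_n-u)\bar u\big)$. You were also right to observe that the second identity is printed without any hypotheses on $u_n$ and $u$; the assumptions you supplied ($\{u_n\}$ bounded in $H^s$, $u_n\to u$ a.e., with $2s<\gamma<\min\{N,4s\}$ so that $r=\tfrac{2N}{2N-\gamma}$ gives $2r\le\tfrac{2N}{N-2s}$ and all HLS terms are finite) are precisely those in force where the lemma is invoked, in the proof of Proposition \ref{proposition existence}, so your reading matches the intended statement.
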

The following compactness lemma is vital in our discussion, see \cite{dinhfengdcds,f18cpaa}.
\begin{lemma}\label{lemma compactness lemma I}
		Let $N\geq 1$, $0<s<1$, $0<p<\frac{4s}{N-2s}$. Let $\{u_n\}$ be a bounded sequence in $H^s$ and satisfy that
		\[
	 \liminf_{n\rightarrow \infty} \|u_n\|_{L^{p+2}} \geq m,
		\]
for some $m>0$.
		Then there exist a sequence $(x_n)_{n\geq 1}$ in $\mathbb{R}^N$ and $U \in H^s\setminus \{0\}$ such that up to a subsequence,
		\[
		u_n(\cdot+x_n) \rightharpoonup U \text{ weakly in } H^s.
		\]
	\end{lemma}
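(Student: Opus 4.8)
The plan is to prove the compactness Lemma \ref{lemma compactness lemma I} via a concentration-compactness/profile-type argument, or more directly by contradiction using the vanishing alternative. First I would recall the fractional Gagliardo--Nirenberg inequality: for $0<p<\frac{4s}{N-2s}$ there is $C>0$ with
\[
\|u\|_{L^{p+2}}^{p+2}\leq C\|u\|_{\dot H^s}^{\frac{Np}{2}}\|u\|_{L^2}^{p+2-\frac{Np}{2}}.
\]
Since $\{u_n\}$ is bounded in $H^s$, the right-hand side controls $\|u_n\|_{L^{p+2}}$, so the hypothesis $\liminf_n\|u_n\|_{L^{p+2}}\geq m>0$ forces a uniform lower bound on $\|u_n\|_{\dot H^s}$ as well; hence there is no degeneracy. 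The key tool is the refined Sobolev inequality or, more elementarily, Lions' vanishing lemma in the fractional setting: if $\{u_n\}$ is bounded in $H^s$ and
\[
\sup_{y\in\mathbb{R}^N}\int_{B(y,1)}|u_n|^2\,dx\to 0,
\]
then $u_n\to 0$ strongly in $L^q$ for every $2<q<\frac{2N}{N-2s}$, in particular in $L^{p+2}$.

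The main steps are then: (1) Assume, for contradiction, that for every sequence $(x_n)\subset\mathbb{R}^N$ we have $u_n(\cdot+x_n)\rightharpoonup 0$ weakly in $H^s$ (the only alternative to the conclusion, since a bounded sequence always has a weakly convergent subsequence after translation, and if some translate had a nonzero weak limit $U$ we would be done). (2) Show this implies vanishing, i.e. $\sup_{y}\int_{B(y,1)}|u_n|^2\to 0$: indeed, choosing $y_n$ nearly achieving the sup and setting $\tilde u_n=u_n(\cdot+y_n)$, boundedness in $H^s$ gives (up to subsequence) $\tilde u_n\rightharpoonup \tilde U$ in $H^s$ and, by the local compact embedding $H^s(B_2)\hookrightarrow\hookrightarrow L^2(B_2)$, $\tilde u_n\to\tilde U$ in $L^2(B_1)$; by assumption $\tilde U=0$, so $\int_{B(y_n,1)}|u_n|^2=\int_{B_1}|\tilde u_n|^2\to 0$, and since $y_n$ nearly realizes the supremum, the whole supremum vanishes. (3) Apply the fractional Lions vanishing lemma to conclude $u_n\to 0$ in $L^{p+2}$, contradicting $\liminf_n\|u_n\|_{L^{p+2}}\geq m>0$. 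Therefore some translate $u_n(\cdot+x_n)$ converges weakly to a nonzero $U\in H^s$, which is exactly the claim.

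The step I expect to be the main obstacle is justifying the fractional version of Lions' vanishing lemma (step 3) — namely that vanishing of the $L^2$ mass on unit balls implies $L^{p+2}$-vanishing — since the classical proof uses $\|\nabla u\|_{L^2}$ and has to be adapted to the nonlocal seminorm $\|u\|_{\dot H^s}$. This can be handled by interpolating $\|u_n\|_{L^{p+2}(B(y,1))}$ between $\|u_n\|_{L^2(B(y,1))}$ and $\|u_n\|_{L^{2^*_s}(B(y,1))}$ with $2^*_s=\frac{2N}{N-2s}$, using the fractional Sobolev embedding $H^s(B(y,1))\hookrightarrow L^{2^*_s}(B(y,1))$ on each ball with a uniform constant, then summing over a locally finite covering of $\mathbb{R}^N$ by unit balls and using that $\sum_y\|u_n\|_{H^s(B(y,1))}^2\lesssim\|u_n\|_{H^s}^2$ is bounded. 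Alternatively, one can simply cite \cite{dinhfengdcds,f18cpaa} where this compactness statement is established, and restrict the present argument to reducing the conclusion to that vanishing dichotomy; I would present the self-contained interpolation-and-covering argument as the cleanest route, remarking that it is the natural fractional analogue of the classical lemma.
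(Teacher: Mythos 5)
Your proposal is correct in its overall strategy, but note that the paper itself does not prove this lemma at all: it is imported as a known result from \cite{dinhfengdcds,f18cpaa}, so there is no internal proof to compare against. Your route --- negate the conclusion (all translated subsequential weak limits vanish), deduce from near-optimal translations $y_n$ and the local compact embedding $H^s(B)\hookrightarrow\hookrightarrow L^2(B)$ that $\sup_{y}\int_{B(y,1)}|u_n|^2\to 0$ along a subsequence, and then invoke a fractional Lions vanishing lemma to force $u_n\to 0$ in $L^{p+2}$, contradicting $\liminf_n\|u_n\|_{L^{p+2}}\ge m$ --- is the standard concentration--compactness argument and is essentially how such lemmas are established in the cited works; its advantage is that it makes the paper self-contained, at the cost of having to verify the fractional vanishing lemma, which you correctly identify as the only nontrivial ingredient.

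One detail in your covering argument needs care. Interpolating on a unit ball and using the (translation-invariant) local Sobolev embedding gives $\|u\|_{L^{p+2}(B)}^{p+2}\le C\,\|u\|_{L^2(B)}^{(1-\theta)(p+2)}\|u\|_{H^s(B)}^{\theta(p+2)}$ with $\theta(p+2)=\frac{Np}{2s}$. Summing over a bounded-overlap covering and using $\sum_i\|u\|_{H^s(B_i)}^2\lesssim\|u\|_{H^s}^2$ (which is indeed true for the localized Gagliardo seminorm, since each pair $(x,y)$ is counted at most a bounded number of times) only closes the estimate when the exponent of the local $H^s$-norm is at least $2$, i.e. when $p\ge\frac{4s}{N}$. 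This covers the application in the paper, where $p>\frac{4s}{N}$, but the lemma as stated allows $0<p<\frac{4s}{N}$ as well; in that range you should argue as in the classical Lions/Willem lemma: first prove $u_n\to 0$ in the particular Lebesgue exponent for which the power of the local $H^s$-norm equals $2$, and then obtain $u_n\to 0$ in every $L^q$ with $2<q<\frac{2N}{N-2s}$ by interpolation against the uniform bounds in $L^2$ and $L^{\frac{2N}{N-2s}}$. Finally, when $N\le 2s$ (e.g. $N=1$, $s\ge\frac12$) the critical exponent must be replaced by an arbitrary large finite exponent, which causes no difficulty. With these routine adjustments your proof is complete.
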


Next,
we recall a sharp Gagliardo-Nirenberg type inequality established in \cite{bou,zjee}.
\begin{lemma}\label{gn1}
Let $N\geq 2$, $0<s<1$ and $0<p<\frac{4s}{N-2s}$. Then, for all $u\in H^s$,
\begin{equation}\label{gn inequality power}
\|u\|_{L^{p+2}}^{p+2}\leq C_{opt}\|(-\Delta)^{\frac{s}{2}}u\|^{\frac{p N}{2s}}_{L^{2}}
\| u \|^{(p+2)-\frac{pN}{2s}}_{L^{2}},
\end{equation}
where the optimal constant $C_{opt}$ given by
\[
 C_{opt}=\left(\frac{2s(p+2)-p N}{p N}\right)^{\frac{Np}{4s}}\frac{2s(p+2)}{(2s(p+2)-p N)\| R \|_{L^{2}}^{p}},
\]
and $R$ is a ground state solution of the following elliptic equation
\begin{equation}\label{ell21}
(-\Delta)^{s}R+R=|R|^{p}R~~~in~~\mathbb{R}^N.
\end{equation}
In particular, in the $L^2$-critical case $p=\frac{4s}{N}$, $ C_{opt}=\frac{2s+N}{N\| R \|_{L^{2}}^{p}}$.

\end{lemma}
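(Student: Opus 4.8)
The plan is to follow Weinstein's variational characterization of the optimal constant, adapted to the fractional Laplacian. Introduce the Weinstein-type quotient
\begin{equation*}
\mathcal{W}(u):=\frac{\|(-\Delta)^{\frac{s}{2}}u\|_{L^{2}}^{\frac{pN}{2s}}\,\|u\|_{L^{2}}^{(p+2)-\frac{pN}{2s}}}{\|u\|_{L^{p+2}}^{p+2}},\qquad u\in H^{s}\setminus\{0\}.
\end{equation*}
The hypothesis $0<p<\frac{4s}{N-2s}$ is exactly what makes the exponent $(p+2)-\frac{pN}{2s}$ positive, so both exponents in the numerator are positive and add up to $p+2$; consequently $\mathcal{W}$ is invariant under $u\mapsto\mu u(\lambda\,\cdot)$ for every $\mu,\lambda>0$ and under translations, and \eqref{gn inequality power} holds with a constant $C$ if and only if $C\ge\beta^{-1}$, where $\beta:=\inf_{u\in H^{s}\setminus\{0\}}\mathcal{W}(u)$. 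Hence $C_{opt}=\beta^{-1}$ and everything reduces to identifying $\beta$. That $\beta>0$ is just the non-sharp fractional Gagliardo--Nirenberg inequality, obtained by interpolating $L^{p+2}$ between $L^{2}$ and $L^{2N/(N-2s)}$ and invoking the Sobolev embedding $\dot{H}^{s}\hookrightarrow L^{2N/(N-2s)}$.

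The decisive step is the attainment of $\beta$. I would take a minimizing sequence and, using the scaling and dilation invariance of $\mathcal{W}$, normalize it so that $\|(-\Delta)^{\frac{s}{2}}u_{n}\|_{L^{2}}=\|u_{n}\|_{L^{2}}=1$; then $\{u_{n}\}$ is bounded in $H^{s}$ and $\|u_{n}\|_{L^{p+2}}^{p+2}\to\beta^{-1}>0$. Lemma~\ref{lemma compactness lemma I} produces $x_{n}\in\mathbb{R}^{N}$ and $U\in H^{s}\setminus\{0\}$ with $v_{n}:=u_{n}(\cdot+x_{n})\rightharpoonup U$ in $H^{s}$ along a subsequence. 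Set $a:=\|(-\Delta)^{\frac{s}{2}}U\|_{L^{2}}^{2}\le1$ and $b:=\|U\|_{L^{2}}^{2}\in(0,1]$ by weak lower semicontinuity. Applying Brezis--Lieb (Lemma~\ref{lemma Brezis Lieb}) to $\|v_{n}\|_{L^{p+2}}^{p+2}$, together with the weak-convergence splitting $\|v_{n}\|^{2}=\|v_{n}-U\|^{2}+\|U\|^{2}+o(1)$ valid in $L^{2}$ and in $\dot{H}^{s}$, gives $\|(-\Delta)^{\frac{s}{2}}(v_{n}-U)\|_{L^{2}}^{2}\to1-a$, $\|v_{n}-U\|_{L^{2}}^{2}\to1-b$ and $\beta^{-1}=\|U\|_{L^{p+2}}^{p+2}+\lim_{n\to\infty}\|v_{n}-U\|_{L^{p+2}}^{p+2}$. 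Using the inequality $\mathcal{W}\ge\beta$ on $U$ and on $v_{n}-U$ and passing to the limit yields
\begin{equation*}
1\le a^{\frac{pN}{4s}}b^{\frac{p+2}{2}-\frac{pN}{4s}}+(1-a)^{\frac{pN}{4s}}(1-b)^{\frac{p+2}{2}-\frac{pN}{4s}}.
\end{equation*}
Since the two exponents are positive with sum $\frac{p+2}{2}>1$, Young's inequality shows the right-hand side is $\le1$, with equality only when $(a,b)\in\{(0,0),(1,1)\}$; as $b>0$ this forces $a=b=1$, so $v_{n}\to U$ strongly in $H^{s}$ and $\mathcal{W}(U)=\beta$, i.e. $\beta$ is attained (one may take $U\ge0$ and radial via symmetric-decreasing rearrangement, which does not increase $\|(-\Delta)^{\frac{s}{2}}\cdot\|_{L^{2}}$).

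It remains to compute $\beta$. The minimizer $U$ satisfies the Euler--Lagrange equation of $\mathcal{W}$, which after dividing through by the positive constants $\|(-\Delta)^{\frac{s}{2}}U\|_{L^{2}}^{2}$, $\|U\|_{L^{2}}^{2}$, $\|U\|_{L^{p+2}}^{p+2}$ reads $\kappa_{1}(-\Delta)^{s}U+\kappa_{2}U=\kappa_{3}|U|^{p}U$ with $\kappa_{i}>0$; a rescaling $R(x):=\mu U(\lambda x)$ with $\mu,\lambda>0$ chosen suitably converts it into \eqref{ell21}, and $\mathcal{W}(R)=\mathcal{W}(U)=\beta$ by invariance. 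Testing \eqref{ell21} against $R$ gives the Nehari identity $\|(-\Delta)^{\frac{s}{2}}R\|_{L^{2}}^{2}+\|R\|_{L^{2}}^{2}=\|R\|_{L^{p+2}}^{p+2}$, and the fractional Pohozaev identity (equivalently, the vanishing at $t=1$ of $\frac{d}{dt}$ of the action evaluated along $R(\cdot/t)$, legitimate by the standard decay of ground states) gives $\frac{N-2s}{2}\|(-\Delta)^{\frac{s}{2}}R\|_{L^{2}}^{2}+\frac{N}{2}\|R\|_{L^{2}}^{2}=\frac{N}{p+2}\|R\|_{L^{p+2}}^{p+2}$. Solving this $2\times2$ linear system yields $\|R\|_{L^{2}}^{2}=\frac{2s(p+2)-pN}{pN}\|(-\Delta)^{\frac{s}{2}}R\|_{L^{2}}^{2}$ and $\|R\|_{L^{p+2}}^{p+2}=\frac{2s(p+2)}{pN}\|(-\Delta)^{\frac{s}{2}}R\|_{L^{2}}^{2}$; substituting into $C_{opt}=\mathcal{W}(R)^{-1}$ and eliminating $\|(-\Delta)^{\frac{s}{2}}R\|_{L^{2}}$ in favour of $\|R\|_{L^{2}}$ produces the asserted formula, the case $p=\frac{4s}{N}$ following by direct substitution (then $\frac{pN}{2s}=2$). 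The same two identities show that the action of every non-trivial solution of \eqref{ell21} equals $\frac{p}{2(p+2)}\|R\|_{L^{p+2}}^{p+2}=\frac{s}{N}\|(-\Delta)^{\frac{s}{2}}R\|_{L^{2}}^{2}$, which is monotone in $\|(-\Delta)^{\frac{s}{2}}R\|_{L^{2}}$, hence in $\mathcal{W}(R)$; therefore the rescaled minimizer $R$ is in fact a ground state of \eqref{ell21} and, conversely, every ground state realizes $\beta$, which is what lets one take $R$ to be a ground state in the statement.

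The main obstacle is the compactness step: because $\mathcal{W}$ is invariant under dilations and translations, a direct weak-convergence argument is insufficient, and the loss of compactness must be recovered by combining the non-vanishing provided by Lemma~\ref{lemma compactness lemma I} with the Brezis--Lieb decomposition and the strict subadditivity encoded in the Young inequality displayed above; the remaining work (Euler--Lagrange equation, rescaling, Nehari and Pohozaev identities, and the final algebra) is routine.
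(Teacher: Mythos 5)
The paper itself gives no proof of this lemma: it is quoted as a known result from \cite{bou,zjee}, so there is no internal argument to compare yours against. Your Weinstein-type variational proof is correct and is essentially the standard route taken in those references: minimize the scale- and dilation-invariant quotient $\mathcal{W}$, recover compactness of a normalized minimizing sequence through Lemma \ref{lemma compactness lemma I} combined with the Brezis--Lieb splitting (Lemma \ref{lemma Brezis Lieb}) and the strict subadditivity coming from the exponent sum $\frac{p+2}{2}>1$, then identify the infimum via the Euler--Lagrange equation, a rescaling to \eqref{ell21}, and the Nehari and Pohozaev identities. I checked the bookkeeping: your equality analysis $1\le a^{\theta_1}b^{\theta_2}+(1-a)^{\theta_1}(1-b)^{\theta_2}$ with $b>0$ does force $(a,b)=(1,1)$, and solving the two identities indeed yields $\|R\|_{L^2}^{2}=\frac{2s(p+2)-pN}{pN}\|(-\Delta)^{\frac{s}{2}}R\|_{L^2}^{2}$, $\|R\|_{L^{p+2}}^{p+2}=\frac{2s(p+2)}{pN}\|(-\Delta)^{\frac{s}{2}}R\|_{L^2}^{2}$, hence the stated $C_{opt}$ and its mass-critical specialization, and your monotonicity-of-action observation correctly converts ``minimizer of $\mathcal{W}$'' into ``ground state of \eqref{ell21}''. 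The only items left at sketch level are routine but worth a citation rather than a wave: the rigorous fractional Pohozaev identity (the paper's Lemma \ref{lemma Pohozaev identity} is stated only for $p>\frac{4s}{N}$, while the lemma here allows all $0<p<\frac{4s}{N-2s}$, so one should invoke the regularity/decay theory of \cite{bou,uniqueness2} for the full range) and the differentiability of $\mathcal{W}$ needed for the Euler--Lagrange step.
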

\begin{lemma}\cite{zhu16jde}
Let $2s<\gamma<\min\{N,4s\}$ and $N\geq 2$. Then for any $u\in H^s$,
\begin{equation}\label{gn inequality hartree}
\int_{\mathbb{R}^N}\int_{\mathbb{R}^N}\frac{|u(x)|^2|u(y)|^2}{(x-y)^\gamma}dxdy
\leq\left(\frac{4s-\gamma}{\gamma}\right)^{\frac{\gamma}{2s}}\frac{4s}{(4s-\gamma)
\|R\|^2_{L^2}}\|u\|^{\frac{4s-\gamma}{s}}_{L^2}\|u\|^{\frac{\gamma}{s}}_{\dot{H}^s}
\end{equation}
where $R$ is ground state solution of
\[
(-\Delta)^sR+ R-(\frac{1}{|x|^\gamma}\ast |R|^2)R=0.
\]
\end{lemma}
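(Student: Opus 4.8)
The plan is to realise the reciprocal of the sharp constant as a Weinstein-type infimum and to evaluate it at the ground state $R$. Set
\[
\mathcal{D}(u):=\int_{\mathbb{R}^N}\int_{\mathbb{R}^N}\frac{|u(x)|^2|u(y)|^2}{|x-y|^\gamma}\,dx\,dy,
\qquad
\mathcal{W}(u):=\frac{\|u\|_{L^2}^{\frac{4s-\gamma}{s}}\,\|u\|_{\dot H^s}^{\frac{\gamma}{s}}}{\mathcal{D}(u)},
\quad u\in H^s\setminus\{0\},
\]
and $\beta:=\inf_{u\in H^s\setminus\{0\}}\mathcal{W}(u)$. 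Both sides of \eqref{gn inequality hartree} are $4$-homogeneous and transform identically under the two-parameter scaling $u\mapsto\mu\,u(\lambda\,\cdot)$, so $\mathcal{W}$ is scaling invariant and the assertion reduces to showing $0<\beta=\mathcal{W}(R)$ followed by the explicit computation of $\mathcal{W}(R)$. The positivity $\beta>0$ is routine: the Hardy--Littlewood--Sobolev inequality gives $\mathcal{D}(u)\le C\|u\|_{L^{4N/(2N-\gamma)}}^{4}$, and since $2s<\gamma<4s$ the exponent $p:=\tfrac{2\gamma}{2N-\gamma}$ lies in $(0,\tfrac{4s}{N-2s})$, so Lemma \ref{gn1} interpolates $\|u\|_{L^{4N/(2N-\gamma)}}$ between $\|u\|_{L^2}$ and $\|u\|_{\dot H^s}$ with exactly the exponents appearing in $\mathcal{W}$.

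I would then prove that $\beta$ is attained. Using the scaling invariance, normalise a minimising sequence by $\|u_n\|_{L^2}=1$ and $\mathcal{D}(u_n)=1$; then $\|u_n\|_{\dot H^s}^{\gamma/s}=\mathcal{W}(u_n)\to\beta$, so $\{u_n\}$ is bounded in $H^s$, and from $\mathcal{D}(u_n)=1$ and the Hardy--Littlewood--Sobolev bound, $\liminf_n\|u_n\|_{L^{4N/(2N-\gamma)}}>0$. Lemma \ref{lemma compactness lemma I} (with $p+2=\tfrac{4N}{2N-\gamma}$) yields translations $y_n$ with $v_n:=u_n(\cdot+y_n)\rightharpoonup\phi\neq0$ weakly in $H^s$. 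Writing $v_n=\phi+r_n$, $r_n\rightharpoonup0$, the Brezis--Lieb splitting (Lemma \ref{lemma Brezis Lieb}, nonlocal form) gives $\mathcal{D}(v_n)=\mathcal{D}(\phi)+\mathcal{D}(r_n)+o(1)$ while $\|v_n\|_{L^2}^2$ and $\|v_n\|_{\dot H^s}^2$ decompose in the same way. Inserting $\mathcal{D}(\phi)\le\beta^{-1}(\|\phi\|_{L^2}^2)^{\theta_1}(\|\phi\|_{\dot H^s}^2)^{\theta_2}$ with $\theta_1=\tfrac{4s-\gamma}{2s}$, $\theta_2=\tfrac{\gamma}{2s}$ (note $\theta_1+\theta_2=2$), and the analogue for $r_n$, into $1=\mathcal{D}(v_n)+o(1)$, and using the elementary inequality $a_1^{\theta_1}b_1^{\theta_2}+a_2^{\theta_1}b_2^{\theta_2}\le(a_1+a_2)^{\theta_1}(b_1+b_2)^{\theta_2}$ for $a_i,b_i\ge0$ (weighted AM--GM, equality only when one pair vanishes), together with $\phi\neq0$, forces $r_n\to0$ in $H^s$. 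Hence $v_n\to\phi$ strongly, $\|\phi\|_{L^2}=1$, $\mathcal{D}(\phi)=1$, and $\mathcal{W}(\phi)=\beta$.

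It remains to identify $\phi$ with the ground state and to compute $\mathcal{W}(R)$. Writing out $\mathcal{W}'(\phi)=0$ shows that a suitable rescaling $\psi:=\mu\,\phi(\lambda\,\cdot)$ solves $(-\Delta)^s\psi+\psi=(|x|^{-\gamma}\ast|\psi|^2)\psi$, with $\mathcal{W}(\psi)=\beta$. For \emph{any} nontrivial solution $u$ of this equation, testing with $u$ gives the Nehari identity $\|u\|_{\dot H^s}^2+\|u\|_{L^2}^2=\mathcal{D}(u)$, and differentiating $\lambda\mapsto\tfrac12\lambda^{2s-N}\|u\|_{\dot H^s}^2+\tfrac12\lambda^{-N}\|u\|_{L^2}^2-\tfrac14\lambda^{\gamma-2N}\mathcal{D}(u)$ at $\lambda=1$ (legitimate because $u$ is a critical point of the action) gives the Pohozaev identity $(2s-N)\|u\|_{\dot H^s}^2-N\|u\|_{L^2}^2-\tfrac{\gamma-2N}{2}\mathcal{D}(u)=0$. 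Eliminating $\mathcal{D}(u)$ pins down the ratios $\|u\|_{L^2}^2=\tfrac{4s-\gamma}{\gamma}\|u\|_{\dot H^s}^2$ and $\mathcal{D}(u)=\tfrac{4s}{\gamma}\|u\|_{\dot H^s}^2$, so that $\mathcal{W}(u)=\tfrac{\gamma}{4s}\big(\tfrac{4s-\gamma}{\gamma}\big)^{\frac{4s-\gamma}{2s}}\|u\|_{\dot H^s}^2$ and the action equals $\tfrac{s}{\gamma}\|u\|_{\dot H^s}^2$; hence, along solutions, both $\mathcal{W}$ and the action are strictly increasing in $\|u\|_{\dot H^s}^2$, so the minimiser $\psi$ is a least-action solution, i.e. a ground state, and we may take $\psi=R$. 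Substituting $\|R\|_{\dot H^s}^2=\tfrac{\gamma}{4s-\gamma}\|R\|_{L^2}^2$ and simplifying the exponents gives $\beta=\mathcal{W}(R)=\big(\tfrac{4s-\gamma}{\gamma}\big)^{-\frac{\gamma}{2s}}\tfrac{(4s-\gamma)\|R\|_{L^2}^2}{4s}$, so $\beta^{-1}$ is precisely the constant displayed in \eqref{gn inequality hartree}. The delicate point is the compactness step — ruling out vanishing and dichotomy for the nonlocal minimising sequence, which is exactly why Lemmas \ref{lemma Brezis Lieb} and \ref{lemma compactness lemma I} are invoked — together with the standard but non-formal identification of the Weinstein optimiser with a ground state rather than with some other bound state; the remaining algebra is mechanical.
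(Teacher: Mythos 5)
The paper gives no proof of this lemma at all — it is imported by citation from \cite{zhu16jde} — so there is nothing internal to compare against; judged on its own, your Weinstein-type argument is correct and is in substance the standard proof from that reference (sharp constant as the infimum of the scaling-invariant functional $\mathcal{W}$, attainment by concentration-compactness, identification of the optimizer with the ground state via Nehari plus Pohozaev, then explicit evaluation). I checked the key computations: the exponent $p=\tfrac{2\gamma}{2N-\gamma}$ does lie in $(0,\tfrac{4s}{N-2s})$ precisely because $\gamma<4s$, the superadditivity $a_1^{\theta_1}b_1^{\theta_2}+a_2^{\theta_1}b_2^{\theta_2}\le(a_1+a_2)^{\theta_1}(b_1+b_2)^{\theta_2}$ is valid for $\theta_1+\theta_2=2$ with strictness forcing $\|r_n\|_{\dot H^s}\to0$ and then $\|\phi\|_{L^2}=1$, the identities $\|u\|_{L^2}^2=\tfrac{4s-\gamma}{\gamma}\|u\|_{\dot H^s}^2$, $\mathcal{D}(u)=\tfrac{4s}{\gamma}\|u\|_{\dot H^s}^2$ follow from Nehari and Pohozaev, and $\mathcal{W}(R)^{-1}$ does simplify to the constant displayed in \eqref{gn inequality hartree}. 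Two small remarks: the Pohozaev identity should be invoked directly as Lemma \ref{lemma Pohozaev identity} (quoted in the paper with $\omega=1$) rather than justified by formally differentiating the action along the dilation path, since that differentiation requires regularity and decay of the solution that mere criticality does not provide; and in the final identification you do not need $\psi=R$ literally (uniqueness for the Hartree ground state is open, as the paper notes) — it suffices that every ground state has the least $\|\cdot\|_{\dot H^s}$ among nontrivial solutions, hence the least value of $\mathcal{W}$ restricted to solutions, which combined with $\mathcal{W}(R)\ge\beta=\mathcal{W}(\psi)$ gives $\beta=\mathcal{W}(R)$, exactly as your monotonicity observation indicates.
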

\begin{lemma}\cite{bou,zhu16jde}(Pohozaev identity)\label{lemma Pohozaev identity}
Let $f(u)=|u|^{p}u$ with $\frac{4s}{N}<p<\frac{4s}{N-2s}$ or $f(u)=(|x|^{-\gamma}\ast|u|^2)u$ with $2s<\gamma<\min\{N,4s\}$, and $u\in H^s$ is a weak solution of problem \eqref{elliptic equation}, then
\[
(N-2s)\|u\|^2_{\dot{H}^s}+N\omega \|u\|_{L^2}^2=\frac{2N}{p+2}\|u\|^{p+2}_{L^{p+2}}~~~~~~~~~~~~~~~~~~~~if~f(u)=|u|^{p}u,
\]
\[
(N-2s)\|u\|^2_{\dot{H}^s}+N\omega \|u\|_{L^2}^2=\frac{2N-\gamma}{2}\int_{\mathbb{R}^N}(|x|^{-\gamma}\ast|u|^2)(x)|u|^2(x)dx~~~if~f(u)
=(|x|^{-\gamma}\ast|u|^2)u.
\]
\end{lemma}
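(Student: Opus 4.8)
The plan is to derive both identities by pairing equation \eqref{elliptic equation} with the infinitesimal generator of $L^2$-scaling, $x\cdot\nabla u$, reading off the contribution of each term from its behaviour under the dilation $u_\lambda(x):=u(\lambda x)$, $\lambda>0$. One has $\|u_\lambda\|_{\dot H^s}^2=\lambda^{2s-N}\|u\|_{\dot H^s}^2$, $\|u_\lambda\|_{L^2}^2=\lambda^{-N}\|u\|_{L^2}^2$, $\|u_\lambda\|_{L^{p+2}}^{p+2}=\lambda^{-N}\|u\|_{L^{p+2}}^{p+2}$, and, by the homogeneity of the Riesz kernel, $\int_{\mathbb{R}^N}(|x|^{-\gamma}\ast|u_\lambda|^2)|u_\lambda|^2\,dx=\lambda^{\gamma-2N}\int_{\mathbb{R}^N}(|x|^{-\gamma}\ast|u|^2)|u|^2\,dx$. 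Differentiating at $\lambda=1$ and using $\frac{d}{d\lambda}u_\lambda\big|_{\lambda=1}=x\cdot\nabla u$ gives, at least formally, $\langle(-\Delta)^s u,x\cdot\nabla u\rangle=\frac{2s-N}{2}\|u\|_{\dot H^s}^2$, $\langle u,x\cdot\nabla u\rangle=-\frac N2\|u\|_{L^2}^2$, $\langle|u|^p u,x\cdot\nabla u\rangle=-\frac{N}{p+2}\|u\|_{L^{p+2}}^{p+2}$, and $\langle(|x|^{-\gamma}\ast|u|^2)u,x\cdot\nabla u\rangle=\frac{\gamma-2N}{4}\int_{\mathbb{R}^N}(|x|^{-\gamma}\ast|u|^2)|u|^2\,dx$. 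Pairing \eqref{elliptic equation} with $x\cdot\nabla u$ and multiplying by $-2$ then reproduces precisely the two displayed formulas.

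The real content is to make these pairings rigorous, since $x\cdot\nabla u\notin H^s$ in general. I would use the Caffarelli--Silvestre extension: writing $w=w(x,y)$ on $\mathbb{R}^{N+1}_+$ for the $s$-harmonic extension of $u$, so that $\operatorname{div}(y^{1-2s}\nabla w)=0$ in $\mathbb{R}^{N+1}_+$, $w(\cdot,0)=u$, $(-\Delta)^s u=-\kappa_s\lim_{y\to0^+}y^{1-2s}\partial_y w$, and $\kappa_s\int_{\mathbb{R}^{N+1}_+}y^{1-2s}|\nabla w|^2=\|u\|_{\dot H^s}^2$ up to normalization, equation \eqref{elliptic equation} becomes a Neumann-type boundary value problem for $w$. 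One multiplies this by the full Euclidean dilation field $Zw:=x\cdot\nabla_x w+y\,\partial_y w$, which preserves the weight $y^{1-2s}$, integrates over a half-ball $B_R^+$, and integrates by parts: the interior terms assemble into $\frac{2s-N}{2}\|u\|_{\dot H^s}^2$, the trace terms on $\{y=0\}$ generate the boundary pairing together with the $\omega$- and nonlinear contributions in $\mathbb{R}^N$, and the lateral terms on $\{\,|(x,y)|=R\,\}$ are shown to vanish as $R\to\infty$ using the integrability of $w$ and $\nabla w$ against the weight. An equivalent route, essentially that of \cite{bou,zhu16jde}, is to regularize $u$ by mollification and a spatial cut-off $u\mapsto\chi(x/R)(u\ast\rho_\varepsilon)$, test $J'(u)=0$ against the now-admissible $x\cdot\nabla$ of the approximation, and pass to the limit, controlling the commutator and truncation remainders.

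The two nonlinear terms are then handled directly on $\mathbb{R}^N$. For the power case one uses $\int_{\mathbb{R}^N}|u|^p u\,(x\cdot\nabla u)\,dx=\frac1{p+2}\int_{\mathbb{R}^N}x\cdot\nabla(|u|^{p+2})\,dx=-\frac{N}{p+2}\|u\|_{L^{p+2}}^{p+2}$, the integration by parts being justified through the same cut-off and the embedding $H^s\hookrightarrow L^{p+2}$ valid for $p<\frac{4s}{N-2s}$. For the Hartree case one symmetrizes in the two convolution variables and invokes that $\iint_{\mathbb{R}^N\times\mathbb{R}^N}|u(x)|^2|u(y)|^2|x-y|^{-\gamma}\,dx\,dy$ --- finite by Hardy--Littlewood--Sobolev and inequality \eqref{gn inequality hartree} in the range $2s<\gamma<\min\{N,4s\}$ --- is homogeneous of degree $\gamma-2N$ under $u\mapsto u(\lambda\cdot)$, so differentiating in $\lambda$ (equivalently, changing variables in the truncated integral) yields the factor $\frac{\gamma-2N}{4}$.

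I expect the main obstacle to be exactly this rigor step: showing that the lateral and decay terms in the extension identity (or the commutator and cut-off remainders in the regularization) carry no residual mass in the limit $R\to\infty$, which requires the a priori local regularity $u\in H^{2s}_{\mathrm{loc}}$ coming from elliptic bootstrap on \eqref{elliptic equation} together with suitable decay of $u$ and $\nabla w$. Once this is secured the identities follow from the scaling algebra above; note that no use is made of the companion Nehari relation $\|u\|_{\dot H^s}^2+\omega\|u\|_{L^2}^2=\|u\|_{L^{p+2}}^{p+2}$ (respectively its Hartree analogue), which is merely the legitimate pairing of \eqref{elliptic equation} with $u\in H^s$ itself and would only be needed if one wished to re-express the Pohozaev identity in other equivalent forms.
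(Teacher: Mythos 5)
Note first that the paper itself does not prove this lemma: it is quoted verbatim from \cite{bou,zhu16jde}, so there is no in-paper argument to compare against. Your proposal is nevertheless correct and follows essentially the same route as the cited sources: the scaling exponents ($\lambda^{2s-N}$, $\lambda^{-N}$, $\lambda^{-N}$, $\lambda^{\gamma-2N}$) and the resulting pairings with $x\cdot\nabla u$ are all computed with the right constants, and multiplying the paired equation by $-2$ reproduces exactly the two displayed identities, including the Hartree factor $\frac{2N-\gamma}{2}$. The only substantive point, which you correctly isolate, is the justification of testing against $x\cdot\nabla u$ for a solution that is a priori only in $H^s$; both the Caffarelli--Silvestre extension with the dilation field on half-balls and the cut-off/mollification scheme you describe are standard and legitimate, provided one first upgrades the weak solution by elliptic bootstrap (e.g.\ $u\in H^{2s+1}$ together with polynomial decay, as established for such subcritical nonlinearities in the literature you implicitly invoke) so that the lateral, commutator and truncation remainders vanish in the limit. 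With that regularity step made explicit, your argument is a complete and faithful reconstruction of the proof the paper delegates to \cite{bou,zhu16jde}.
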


Finally,
 we recall the localized virial estimate related to \eqref{e} with $f(u)=|u|^{p}u$, which is the main ingredient in the proof
of the sharp threshold of global
existence and blow-up. The localized virial estimate was used by
Boulenger-Himmelsbach-Lenzmann \cite{bou} to show the existence of finite time blow-up radial solutions to
\eqref{e} in the $L^2$-critical and $L^2$-supercritical cases. Let us start with the following estimate.
	\begin{lemma} [\cite{bou}] \label{lemma various estimate 1}
		Let $N\geq 1$ and $\varphi:\mathbb{R}^N \rightarrow \mathbb{R}$ be such that $\nabla \varphi \in W^{1,\infty}$. Then for all $u \in H^{1/2}$,
		\[
		\left| \int_{\mathbb{R}^N} \overline{u}(x) \nabla \varphi(x) \cdot \nabla u(x) dx \right| \leq C \left( \||\nabla|^{1/2} u\|^2_{L^2} + \|u\|_{L^2} \||\nabla|^{1/2} u\|_{L^2}\right),
		\]
		for some $C>0$ depending only on $\|\nabla \varphi\|_{W^{1,\infty}}$ and $N$.
	\end{lemma}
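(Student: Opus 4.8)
The plan is to make sense of the bilinear form and bound it in one stroke, by splitting each derivative $\partial_j$ into two half-derivatives, transferring one of them onto the conjugate factor, and absorbing the error through a fractional Leibniz (commutator) estimate. It suffices to treat $I_j:=\int_{\mathbb{R}^N}\overline{u}\,\partial_j\varphi\,\partial_j u\,dx$ for each $j=1,\dots,N$ and sum. Write $a:=\partial_j\varphi$, which is real, bounded and Lipschitz, and set $T_j:=|\nabla|^{-1/2}\partial_j$, whose Fourier multiplier $i\xi_j|\xi|^{-1/2}$ has modulus $\le|\xi|^{1/2}$; thus $T_j:H^{1/2}\to L^2$ with $\|T_j u\|_{L^2}\le\||\nabla|^{1/2}u\|_{L^2}$ and $\partial_j u=|\nabla|^{1/2}(T_j u)$. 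Since $|\nabla|^{1/2}$ is self-adjoint and commutes with complex conjugation,
\[
I_j=\int_{\mathbb{R}^N}\overline{u}\,a\,|\nabla|^{1/2}(T_j u)\,dx=\int_{\mathbb{R}^N}\overline{|\nabla|^{1/2}(a u)}\;T_j u\,dx=\int_{\mathbb{R}^N}a\,\overline{|\nabla|^{1/2}u}\;T_j u\,dx+\int_{\mathbb{R}^N}\overline{[|\nabla|^{1/2},a]u}\;T_j u\,dx ,
\]
where $[|\nabla|^{1/2},a]u:=|\nabla|^{1/2}(au)-a\,|\nabla|^{1/2}u$.

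By Cauchy--Schwarz the first integral is $\le\|a\|_{L^\infty}\||\nabla|^{1/2}u\|_{L^2}\|T_j u\|_{L^2}\le\|\partial_j\varphi\|_{L^\infty}\||\nabla|^{1/2}u\|_{L^2}^2$, while the second is $\le\|[|\nabla|^{1/2},a]u\|_{L^2}\|T_j u\|_{L^2}$. For the latter factor one invokes the commutator estimate
\[
\big\|[|\nabla|^{1/2},a]u\big\|_{L^2}\le C\,[a]_{1/2}\,\|u\|_{L^2},\qquad [a]_{1/2}:=\sup_{x\neq y}\frac{|a(x)-a(y)|}{|x-y|^{1/2}},
\]
together with the elementary bound $[a]_{1/2}\le C\|a\|_{L^\infty}^{1/2}\|\nabla a\|_{L^\infty}^{1/2}\le C\|\nabla\varphi\|_{W^{1,\infty}}$ valid for bounded Lipschitz $a$, so that the second integral is $\le C\|\nabla\varphi\|_{W^{1,\infty}}\|u\|_{L^2}\||\nabla|^{1/2}u\|_{L^2}$. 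Summing over $j$ produces exactly the claimed inequality with $C=C(\|\nabla\varphi\|_{W^{1,\infty}},N)$, and the same chain of identities shows each integral above is finite for $u\in H^{1/2}$, which is also how one assigns a meaning to the left-hand side.

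The one non-elementary ingredient, and hence the main obstacle, is the commutator estimate for $[|\nabla|^{1/2},a]$: its kernel $c_N(a(x)-a(y))|x-y|^{-N-1/2}$ has only the borderline size $[a]_{1/2}|x-y|^{-N}$, so $L^2$-boundedness does not follow from a crude Schur test but must be extracted from the Hölder regularity of $a$ --- e.g.\ through the $T(1)$ theorem and Calderón-type commutator bounds, or, on the Fourier side, from the pointwise inequality $\big||\xi|^{1/2}-|\eta|^{1/2}\big|\le|\xi-\eta|^{1/2}$ combined with a Littlewood--Paley decomposition and Young's inequality. Should one prefer to lean less on this estimate, one can instead decompose $u=u_\flat+u_\sharp$ at unit frequency, using $\|\nabla u_\flat\|_{L^2}\le\min\{\|u\|_{L^2},\||\nabla|^{1/2}u\|_{L^2}\}$ and $\|u_\sharp\|_{L^2}\le\||\nabla|^{1/2}u\|_{L^2}$: the $\flat\flat$, $\flat\sharp$ and $\sharp\flat$ contributions to $I_j$ then follow from Cauchy--Schwarz (after one integration by parts in the mixed term, which also produces the $\Delta\varphi$ contribution), while in the $\sharp\sharp$ contribution the real part equals $-\tfrac12\int_{\mathbb{R}^N}(\partial_j^2\varphi)|u_\sharp|^2\,dx$ and is immediate, leaving only the high-frequency imaginary part for the commutator estimate.
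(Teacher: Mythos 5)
Your overall strategy (transfer half a derivative and control the error by the commutator $[|\nabla|^{1/2},a]$ with $a=\partial_j\varphi$) is exactly the mechanism behind the quoted result of Boulenger--Himmelsbach--Lenzmann, which this paper does not reprove but simply cites. However, the key estimate you invoke is not correct as stated: the bound $\|[|\nabla|^{1/2},a]u\|_{L^2}\le C\,[a]_{C^{1/2}}\|u\|_{L^2}$, with constant depending only on the homogeneous H\"older-$\tfrac12$ seminorm of $a$, is false in general. The commutator has the antisymmetric Calder\'on--Zygmund kernel $c_N\,(a(x)-a(y))|x-y|^{-N-1/2}$, and by the $T(1)$ theorem its $L^2$-boundedness forces $[|\nabla|^{1/2},a](1)=|\nabla|^{1/2}a\in BMO$; for $a\in \dot C^{1/2}$ one only gets $|\nabla|^{1/2}a\in \dot B^0_{\infty,\infty}$, and Weierstrass-type examples such as $a(x)=\sum_{j\ge1}2^{-j/2}\cos(2^j x_1)$ lie in $\dot C^{1/2}$ while $|\nabla|^{1/2}a\notin BMO$, so the operator is unbounded. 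In other words, the ``main obstacle'' you flag cannot be overcome by $T(1)$ or Littlewood--Paley arguments at the level of generality you chose, because you discarded the Lipschitz information when you interpolated $[a]_{1/2}\le C\|a\|_\infty^{1/2}\|\nabla a\|_\infty^{1/2}$.

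The gap is easily repaired precisely by keeping the full hypothesis $a=\partial_j\varphi\in W^{1,\infty}$. Using the pointwise bound $|a(x)-a(y)|\le\min\{2\|a\|_{L^\infty},\ \|\nabla a\|_{L^\infty}|x-y|\}$ in the explicit kernel, one gets $|K(x,y)|\le c\min\{\|\nabla a\|_{L^\infty}|x-y|^{1/2-N},\ \|a\|_{L^\infty}|x-y|^{-N-1/2}\}$, which is integrable both near the diagonal and at infinity; optimizing the splitting radius and applying Schur's test yields
\begin{equation*}
\big\|[|\nabla|^{1/2},a]u\big\|_{L^2}\le C\,\|a\|_{L^\infty}^{1/2}\|\nabla a\|_{L^\infty}^{1/2}\,\|u\|_{L^2}\le C\,\|\nabla\varphi\|_{W^{1,\infty}}\|u\|_{L^2},
\end{equation*}
which is all your argument needs, and no $T(1)$ machinery is required. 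With this substitution (and a routine density argument, say for $u\in H^1$ first, to give meaning to the left-hand side when $u\in H^{1/2}$ only), your first proof closes. Your alternative low/high frequency decomposition does not avoid the issue: its $\sharp\sharp$ imaginary part needs the same commutator bound, which again should be run with the $W^{1,\infty}$ (not merely $C^{1/2}$) control of $\nabla\varphi$.
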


Let $N\geq 1$, $1/2 \leq s <1$ and $\varphi: \mathbb{R}^N \rightarrow \mathbb{R}$ be such that $\nabla \varphi \in W^{3,\infty}$. Assume $\psi \in C([0,T^*), H^s)$ is a solution to $(\ref{e})$. The localized virial action of $\psi$ is defined by
	\begin{align}\label{virial action}
	M_{\varphi}(\psi(t)):= 2 \int_{\mathbb{R}^N} \nabla \varphi(x) \cdot Im{(\overline{\psi}(t,x) \nabla \psi(t,x))} dx.
	\end{align}
It follows from Lemma $\ref{lemma various estimate 1}$ that $M_\varphi(\psi(t))$ is well-defined. Indeed, by Lemma $\ref{lemma various estimate 1}$,
	\[
	|M_\varphi(\psi(t))| \lesssim C\left(\|\nabla \varphi\|_{L^\infty}, \|\Delta \varphi\|_{L^\infty} \right) \|\psi(t)\|^2_{H^{1/2}} \lesssim C(\varphi) \|\psi(t)\|^2_{H^s} <\infty.
	\]
	To study the time evolution of $M_\varphi(\psi(t))$, we need the following auxiliary function
	\begin{align}
	\psi_m(t,x):= c_s \frac{1}{-\Delta +m} \psi(t,x) = c_s \mathcal{F}^{-1} \left( \frac{ \hat{\psi}(t, \xi)}{|\xi|^2+m}\right), \quad m>0, \label{auxiliary function}
	\end{align}
	where
\[
	c_s:= \sqrt{\frac{\sin \pi s}{\pi}}.
	\]
	Remark that since $\psi(t) \in H^s$, the smoothing property of $(-\Delta+m)^{-1}$ implies that $\psi_m(t) \in H^{s+2}$ for any $t\in [0,T^*)$.
	\begin{lemma} [\cite{bou}] \label{lemma virial action}
		For any $t\in [0,T^*)$, the following identity holds true
		\begin{align}\label{time evolution virial action}
		\frac{d}{dt} M_\varphi(\psi(t)) &= - \int_0^\infty m^s \int_{\mathbb{R}^N} \Delta^2 \varphi |\psi_m(t)|^2 dxdm + 4 \sum_{j,k=1}^N \int_0^\infty m^s \int _{\mathbb{R}^N} \partial^2_{jk} \varphi \partial_j \overline{\psi}_m(t)  \partial_k \psi_m(t) dx dm  \nonumber \\
		&\mathrel{\phantom{=}}-\frac{2 p}{p+2} \int_{\mathbb{R}^N} \Delta \varphi |\psi(t)|^{p+2} dx,
		\end{align}
		where $\psi_m$ is defined in $(\ref{auxiliary function})$.
	\end{lemma}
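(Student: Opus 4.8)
The plan is to follow the strategy of Boulenger--Himmelsbach--Lenzmann \cite{bou}: write $M_\varphi$ as a quadratic form generated by a symmetrized dilation operator, differentiate it in time with the help of the equation to produce a commutator with $(-\Delta)^s$ plus a purely local nonlinear term, and then resolve the fractional commutator through the Balakrishnan (resolvent) representation of $(-\Delta)^s$.

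First I would observe that the localized virial action \eqref{virial action} can be rewritten as
\[
M_\varphi(\psi)=\langle \psi,\Gamma_\varphi\psi\rangle_{L^2},\qquad \Gamma_\varphi:=-i\left(2\nabla\varphi\cdot\nabla+\Delta\varphi\right),
\]
where $\Gamma_\varphi$ is the symmetrized generator $\tfrac{1}{i}(\nabla\varphi\cdot\nabla+\nabla\cdot\nabla\varphi)$; by Lemma \ref{lemma various estimate 1} this is a bounded quadratic form on $H^{1/2}$, and $H^s\subset H^{1/2}$. Differentiating in $t$ and inserting $\partial_t\psi=-i\left((-\Delta)^s\psi-|\psi|^p\psi\right)$, and using the self-adjointness of $\Gamma_\varphi$, I obtain
\[
\frac{d}{dt}M_\varphi(\psi(t))=\left\langle\psi,\,i[(-\Delta)^s,\Gamma_\varphi]\,\psi\right\rangle+\mathcal{N}(\psi(t)),
\]
where $\mathcal{N}(\psi)$ collects the contribution of the nonlinearity. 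Since $|\psi|^p\psi$ is local, $\mathcal{N}$ is computed exactly as in the classical NLS: writing out $\Gamma_\varphi$, using $\mathrm{Re}(\psi\nabla\overline\psi)=\tfrac{1}{2}\nabla|\psi|^2$ and integrating by parts, one finds $\mathcal{N}(\psi)=-\tfrac{2p}{p+2}\int_{\mathbb{R}^N}\Delta\varphi\,|\psi|^{p+2}\,dx$, which is the last term of \eqref{time evolution virial action}.

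For the kinetic commutator I would use the Balakrishnan formula
\[
(-\Delta)^s=\frac{\sin\pi s}{\pi}\int_0^\infty m^{s-1}\,\frac{-\Delta}{-\Delta+m}\,dm=c_s^2\int_0^\infty m^{s-1}\Bigl(1-m\,(-\Delta+m)^{-1}\Bigr)dm,
\]
the resolvent commutator identity $[(-\Delta+m)^{-1},\Gamma_\varphi]=-(-\Delta+m)^{-1}[-\Delta,\Gamma_\varphi](-\Delta+m)^{-1}$, and the defining relation $(-\Delta+m)^{-1}\psi=c_s^{-1}\psi_m$ from \eqref{auxiliary function}. A short computation then collapses the fractional commutator to
\[
\left\langle\psi,\,i[(-\Delta)^s,\Gamma_\varphi]\,\psi\right\rangle=\int_0^\infty m^{s}\left\langle\psi_m,\,i[-\Delta,\Gamma_\varphi]\,\psi_m\right\rangle dm.
\]
Since $\psi_m\in H^{s+2}$, I may apply the classical local virial identity for $-\Delta$, obtained from two integrations by parts (using $\nabla\varphi\in W^{3,\infty}$),
\[
\left\langle u,\,i[-\Delta,\Gamma_\varphi]\,u\right\rangle=-\int_{\mathbb{R}^N}\Delta^2\varphi\,|u|^2\,dx+4\sum_{j,k=1}^N\int_{\mathbb{R}^N}\partial^2_{jk}\varphi\,\partial_j\overline u\,\partial_k u\,dx,
\]
with $u=\psi_m$; inserting this into the $m$-integral produces exactly the first two terms of \eqref{time evolution virial action}.

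The real difficulty is rigor, not algebra. One must establish the absolute convergence of the Balakrishnan integral: the regime $m\to\infty$ is immediate from $\|\psi_m\|_{L^2}\lesssim m^{-1}\|\psi\|_{L^2}$, but the behaviour near $m=0$ requires the $H^s$-regularity of $\psi$ together with the boundedness (and compact support of the derivatives) of $\varphi$ in order to bound $\int_0^\infty m^s\int_{\mathbb{R}^N}|\Delta^2\varphi|\,|\psi_m|^2\,dx\,dm$ and $\int_0^\infty m^s\int_{\mathbb{R}^N}|\nabla^2\varphi|\,|\nabla\psi_m|^2\,dx\,dm$ by $C(\varphi)\|\psi\|_{H^s}^2$ --- precisely the estimates of \cite{bou}. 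One must also justify differentiating $t\mapsto M_\varphi(\psi(t))$ and interchanging $\tfrac{d}{dt}$ with the $m$-integral, which is delicate because \eqref{e} is only locally well-posed in $C([0,T^*),H^s)$ with $s<1$; I would handle this by first proving the identity for smooth, rapidly decaying solutions --- where every step above is classical --- and then passing to the limit using the conservation laws of Proposition \ref{proposition radial lwp} and continuous dependence in $H^s$. This is the scheme of \cite{bou}, to which I refer for the finer estimates.
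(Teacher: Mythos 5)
Your proposal is correct and follows exactly the argument that the paper relies on: this lemma is quoted from Boulenger--Himmelsbach--Lenzmann \cite{bou} without proof, and their proof is precisely your scheme --- writing $M_\varphi$ as the quadratic form of the symmetrized operator $\Gamma_\varphi$, computing the nonlinear contribution locally as for classical NLS, resolving the commutator $[(-\Delta)^s,\Gamma_\varphi]$ through the Balakrishnan resolvent formula so that the classical virial identity applies to $\psi_m\in H^{s+2}$, and then justifying convergence of the $m$-integral and the time differentiation by approximation. No gaps beyond the technical estimates you already flag and correctly attribute to \cite{bou}.
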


Using Plancherel's and Fubini's theorem, it follows that
	\begin{align}\label{property u_m}
	\int_0^\infty m^s \int_{\mathbb{R}^N} |\nabla \psi_m| dx dm &= \int_{\mathbb{R}^N} \left(\frac{\sin\pi s}{ \pi } \int_0^\infty \frac{m^s dm}{(|\xi|^2+m)^2} \right) |\xi|^2 |\hat{\psi}(\xi)|^2 d\xi  \nonumber \\
	&= \int_{\mathbb{R}^N} (s |\xi|^{2s-2}) |\xi|^2 |\hat{\psi}(\xi)|^2 d\xi = s \|\psi\|^2_{\dot{H}^s}.
	\end{align}
	If we make formal substitution and take the unbounded function $\nabla \varphi(x) = 2x$, then we have $\partial^2_{jk} \varphi =2\delta_{jk}$ and $\Delta^2\varphi=0$. Using $(\ref{property u_m})$, we find formally the virial identity
	\begin{align}
	\frac{d}{dt} M_{|x|^2}(\psi(t)) &= 8s \|\psi(t)\|^2_{\dot{H}^s} -\frac{4N p}{p+2} \|\psi(t)\|^{p+2}_{L^{p+2}} \nonumber \\
	&= 4Np E(\psi(t)) - 2(Np-4s) \|\psi(t)\|^2_{\dot{H}^s}. \label{virial identity}
	\end{align}
	Now let $\varphi:\mathbb{R}^N \rightarrow \mathbb{R}$ be as above. We assume in addition that $\varphi$ is radially symmetric and satisfies
	\begin{align}
	\varphi(r):= \left\{
	\begin{array}{c l}
	r^2 &\text{for } r \leq 1, \\
	\text{const.} &\text{for } r \geq 10,
	\end{array}
	\right. \quad \text{and } \varphi'' (r) \leq 2 \text{ for } r \geq 0. \label{choice of varphi}
	\end{align}
	Here the precise constant is not important. For $R>0$ given, we define the rescaled function $\varphi_R: \mathbb{R}^N \rightarrow \mathbb{R}$ by
	\begin{align}
	\varphi_R(x) = \varphi_R(r):= R^2 \varphi(r/R). \label{define varphi_R}
	\end{align}
	It is easy to see that
	\begin{align}
	2-\varphi''_R(r) \geq 0, \quad 2-\frac{\varphi'_R(r)}{r} \geq 0, \quad 2N-\Delta \varphi_R(x) \geq 0, \quad \forall r \geq 0, \forall x \in \mathbb{R}^N. \label{property varphi_R}
	\end{align}

Moreover,
	\[
	\|\nabla^j \varphi_R\|_{L^\infty} \lesssim R^{2-j}, \quad j=0, \cdots, 4,
	\]
	and
	\[
	\text{supp}(\nabla^j \varphi_R) \subset \left\{
	\begin{array}{cl}
	\{ |x| \leq 10 R\} &\text{for } j=1,2, \\
	\{ R \leq |x| \leq 10 R \} &\text{for } j=3, 4.
	\end{array}
	\right.
	\]
Finally, we recall the following virial estimate for the time evolution of $M_{\varphi_R}(\psi(t))$, see \cite{bou}.
	\begin{lemma} [$H^s$ radial virial estimate] \label{lemma Hs radial virial estimate}
		Let $N\geq 2$, $\frac{N}{2N-1} \leq s <1$, $0<p < \frac{4s}{N-2s}$, $\varphi_R$ be as in $(\ref{define varphi_R})$ and $\psi \in C([0,T^*), H^s)$ be a radial solution to $(\ref{e})$. Then for any $t\in [0,T^*)$,
		\begin{align}
		\begin{aligned}
		\frac{d}{dt} M_{\varphi_R}(\psi(t)) &\leq 4s \|\psi(t)\|^2_{\dot{H}^s} - \frac{2N p}{p+2} \|\psi(t)\|^{p+2}_{L^{p+2}}  \\
		&\mathrel{\phantom{\leq }} + O \left( R^{-2s} + R^{-\frac{p(N-1)}{2} + \varepsilon s} \|\psi(t)\|^{\frac{p}{2s}+\varepsilon}_{\dot{H}^s} \right)  \\
		&= 2Np E(\psi(t)) -(Np -4s) \|\psi(t)\|^2_{\dot{H}^s} \\
		&\mathrel{\phantom{\leq}} + O \left( R^{-2s} + R^{-\frac{p(N-1)}{2} + \varepsilon s} \|\psi(t)\|^{\frac{p}{2s}+\varepsilon}_{\dot{H}^s} \right),
		\end{aligned}
		\label{Hs radial virial estimate}
		\end{align}
		for any $0<\varepsilon<\frac{(2N-1) p}{2s}$. Here the implicit constant depends only on $\|\psi_0\|_{L^2}, N, \varepsilon, s$ and $p$.
	\end{lemma}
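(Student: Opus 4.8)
I would obtain the inequality by localizing the exact virial computation of Lemma~\ref{lemma virial action}. Applying that lemma with $\varphi=\varphi_R$ writes $\frac{d}{dt}M_{\varphi_R}(\psi(t))$ as the sum of a biharmonic piece $-\int_0^\infty m^s\int_{\mathbb{R}^N}\Delta^2\varphi_R\,|\psi_m|^2\,dx\,dm$, a Hessian piece $4\sum_{j,k}\int_0^\infty m^s\int_{\mathbb{R}^N}\partial^2_{jk}\varphi_R\,\partial_j\overline{\psi}_m\,\partial_k\psi_m\,dx\,dm$, and the nonlinear piece $-\frac{2p}{p+2}\int_{\mathbb{R}^N}\Delta\varphi_R\,|\psi|^{p+2}\,dx$. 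Since $\varphi_R(x)=|x|^2$ on $\{|x|\le R\}$, there $\varphi_R''\equiv 2$ and $\Delta\varphi_R\equiv 2N$, and by \eqref{property varphi_R} the functions $2-\varphi_R''$ and $2N-\Delta\varphi_R$ are non-negative and vanish on $\{|x|\le R\}$. The plan is to split $\varphi_R''=2-(2-\varphi_R'')$ and $\Delta\varphi_R=2N-(2N-\Delta\varphi_R)$, keep the quadratic parts — which reproduce the non-localized virial terms (cf.\ \eqref{virial identity}), i.e. the leading terms of \eqref{Hs radial virial estimate} — and absorb the two remainders, both supported in $\{|x|\ge R\}$, into the error quantities.

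For the Hessian piece I would use that $\psi(t)$, hence $\psi_m(t)=c_s(-\Delta+m)^{-1}\psi(t)$, is radial, since $(-\Delta+m)^{-1}$ is a Fourier multiplier with radial symbol: inserting $\partial^2_{jk}\varphi_R=\delta_{jk}\frac{\varphi_R'}{r}+\frac{x_jx_k}{r^2}\bigl(\varphi_R''-\frac{\varphi_R'}{r}\bigr)$ and $\partial_j\psi_m=\frac{x_j}{r}\,\psi_m'$, the contraction collapses to $\varphi_R''\,|\nabla\psi_m|^2$. With the splitting above and \eqref{property u_m}, the Hessian piece is then bounded above by a fixed multiple of $\int_0^\infty m^s\int_{\mathbb{R}^N}|\nabla\psi_m|^2\,dx\,dm=s\|\psi\|^2_{\dot{H}^s}$, since the discarded term $-4\int_0^\infty m^s\int_{\mathbb{R}^N}(2-\varphi_R'')|\nabla\psi_m|^2\,dx\,dm$ is $\le 0$. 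For the nonlinear piece the quadratic part gives a fixed multiple of $\|\psi\|^{p+2}_{L^{p+2}}$, and the remainder is controlled by $\int_{|x|\ge R}|\psi|^{p+2}\,dx\le\|\psi\|^p_{L^\infty(|x|\ge R)}\,\|\psi\|^2_{L^2}$; here I would invoke the scale-invariant radial Sobolev (Strauss) inequality $\bigl\||x|^{\frac{N-1}{2}-\delta}u\bigr\|_{L^\infty}\lesssim\|u\|^{1-\theta}_{L^2}\|u\|^{\theta}_{\dot{H}^s}$ with $\theta=\frac{1/2+\delta}{s}$, valid for radial $u$ whenever $0<\delta<s-\frac12$ (recall $\frac12<s<1$). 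Taking $\delta=\varepsilon s/p$ (small, within the admissible range of $\varepsilon$) and using the mass conservation $\|\psi(t)\|_{L^2}=\|\psi_0\|_{L^2}$ from \eqref{masscon}, the remainder becomes $O\bigl(R^{-\frac{p(N-1)}{2}+\varepsilon s}\,\|\psi\|^{\frac{p}{2s}+\varepsilon}_{\dot{H}^s}\bigr)$, with implicit constant depending only on $\|\psi_0\|_{L^2},N,s,p,\varepsilon$; this is where the hypothesis $s\ge\frac{N}{2N-1}$ (which forces $s>\frac12$) enters.

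The biharmonic piece is the step I expect to be the main obstacle. There $\Delta^2\varphi_R$ is supported in $\{R\le|x|\le 10R\}$ with $\|\Delta^2\varphi_R\|_{L^\infty}\lesssim R^{-2}$, so (since $|x|\le 10R$ there) $|\Delta^2\varphi_R|\lesssim|x|^{-2}\,\mathbf 1_{\{|x|\ge R\}}$; one cannot bound this piece simply by $R^{-2}\int_0^\infty m^s\|\psi_m\|^2_{L^2}\,dm$, because that $m$-integral diverges (formally it equals $\|\psi\|^2_{\dot{H}^{s-1}}$), nor by forfeiting the decay in $R$, because then no smallness is produced. The way I would handle it is to split $\int_0^\infty(\cdot)\,dm=\int_0^{R^{-2}}(\cdot)\,dm+\int_{R^{-2}}^\infty(\cdot)\,dm$: for $m<R^{-2}$, combine Hardy's inequality $\int_{|x|\ge R}\frac{|\psi_m|^2}{|x|^2}\,dx\lesssim\|\nabla\psi_m\|^2_{L^2}$ with the elementary bound $\|\nabla\psi_m\|_{L^2}\lesssim m^{-1/2}\|\psi\|_{L^2}$, which gives a contribution $\lesssim\|\psi\|^2_{L^2}\int_0^{R^{-2}}m^{s-1}\,dm\lesssim R^{-2s}\|\psi\|^2_{L^2}$; for $m>R^{-2}$, use $|x|^{-2}\le R^{-2}$ together with $\|\psi_m\|_{L^2}\lesssim m^{-1}\|\psi\|_{L^2}$, which gives a contribution $\lesssim R^{-2}\|\psi\|^2_{L^2}\int_{R^{-2}}^\infty m^{s-2}\,dm\lesssim R^{-2s}\|\psi\|^2_{L^2}$; both $m$-integrals converge precisely because $0<s<1$ (at $0$ one uses $s>0$, at $\infty$ one uses $s<1$). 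For $N=2$ one replaces Hardy's inequality by its exterior-domain version with the appropriate boundary term. Altogether the biharmonic piece is $O(R^{-2s})$, with implicit constant depending only on $\|\psi_0\|_{L^2}$, as in \cite{bou}. Collecting the three pieces and rewriting $s\|\psi\|^2_{\dot{H}^s}$ and $\|\psi\|^{p+2}_{L^{p+2}}$ through the energy $E(\psi(t))$ of \eqref{energy power} would then yield \eqref{Hs radial virial estimate}. No regularization is needed, since Lemma~\ref{lemma virial action} already supplies the identity for $H^s$ solutions; besides the biharmonic $m$-integral splitting, the remaining delicate point is the $\varepsilon$-loss in the radial Sobolev inequality, forced by $s<1$.
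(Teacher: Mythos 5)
You should first note that the paper contains no proof of this lemma: it is recalled verbatim (with $p=2\sigma$) from Boulenger--Himmelsbach--Lenzmann \cite{bou}, so the only meaningful comparison is with the argument there. Your outline is essentially that argument: apply Lemma \ref{lemma virial action} with $\varphi=\varphi_R$, use radiality to collapse the Hessian term to $\varphi_R''|\nabla\psi_m|^2$ and discard the nonpositive contribution of $2-\varphi_R''$ via \eqref{property varphi_R} and \eqref{property u_m}, split $\Delta\varphi_R=2N-(2N-\Delta\varphi_R)$ and control the exterior nonlinear tail by the radial Strauss inequality with an $\varepsilon$-loss plus mass conservation, and show the biharmonic term is $O(R^{-2s})$. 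Your treatment of the biharmonic term by splitting the $m$-integral at $m=R^{-2}$, with $\|\nabla\psi_m\|_{L^2}\lesssim m^{-1/2}\|\psi\|_{L^2}$ and $\|\psi_m\|_{L^2}\lesssim m^{-1}\|\psi\|_{L^2}$, is a valid alternative to the route in \cite{bou} (which integrates by parts against $\nabla\Delta\varphi_R$ and uses $s>\tfrac12$, so it works uniformly in $N\geq 2$); your Hardy-based small-$m$ bound is fine for $N\geq 3$, and for $N=2$ the cleanest repair is the radial pointwise bound $|f(r)|^2\lesssim r^{-1}\|f\|_{L^2}\|\partial_r f\|_{L^2}$ on the annulus, again using $s>\tfrac12$ (guaranteed by $s\geq\frac{N}{2N-1}$), rather than an unspecified ``exterior Hardy with boundary term.'' Two further small caveats: your interpolation requires $\delta<s-\tfrac12$, i.e.\ $\varepsilon<\frac{p(2s-1)}{2s}$, so you do not reach the full stated range $0<\varepsilon<\frac{(2N-1)p}{2s}$ (harmless, since only small $\varepsilon$ is ever used), and the constraint $p<\frac{4s}{N-2s}$ is what makes the $\dot H^s$-power in the error subquadratic, which is what the application needs.

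The one concrete gap is your bookkeeping of the main term. Your own computation (Hessian term $\leq 8\int_0^\infty m^s\|\nabla\psi_m\|^2_{L^2}\,dm=8s\|\psi\|^2_{\dot H^s}$, nonlinear term $=-\frac{4Np}{p+2}\|\psi\|^{p+2}_{L^{p+2}}+\text{tail}$) reproduces the coefficients of the formal virial identity \eqref{virial identity}, namely $8s\|\psi\|^2_{\dot H^s}-\frac{4Np}{p+2}\|\psi\|^{p+2}_{L^{p+2}}=4NpE(\psi)-2(Np-4s)\|\psi\|^2_{\dot H^s}$, which is exactly twice the leading expression written in \eqref{Hs radial virial estimate}; your assertion that the quadratic parts give ``the leading terms of \eqref{Hs radial virial estimate}'' conflates the two. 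As written, your argument therefore proves the estimate with doubled main-term coefficients; since the main expression equals $4Q(\psi(t))$ (resp.\ $8Q(\psi(t))$) with $Q$ as in \eqref{qupower}, the doubled version implies the stated one only when $Q(\psi(t))\leq 0$ — which is the only regime in which the paper invokes the lemma, via \eqref{Qxiaoyuling}, so the application in Section 5 is unaffected — but it does not yield the unconditional inequality as literally stated, and no localization argument can, since $\varphi_R''\equiv 2$ and $\Delta\varphi_R\equiv 2N$ on $\{|x|\leq R\}$. You should either carry the constants of \eqref{virial identity} through the statement (as in \cite{bou}) or restrict the claim to the sign regime actually used.
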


\section{Existence of normalized ground states}
In this section, we will prove Theorem \ref{theorem existence}. Firstly, we establish some preliminaries.

\begin{lemma}\label{lemma max energy}
Let $u^\lambda(x)=\lambda^{N/2}u(\lambda x)$, $f(u)=|u|^{p}u$ with $\frac{4s}{N}<p<\frac{4s}{N-2s}$ or $f(u)=(|x|^{-\gamma}\ast|u|^2)u$ with $2s<\gamma<\min\{N,4s\}$. Then for any $u\in S(c)$, there exists a unique $\lambda_0 >0$ such that

(1) when $f(u)=|u|^{p}u$,
\[
E(u^{\lambda_0}):=\max_{t>0}E(u^\lambda)=
\left(\frac{Np-4s}{2Np}\right)\left(\frac{2s(p+2)}{Np}\right)^{\frac{4s}{Np-4s}}
\frac{\|u\|^{\frac{2Np}{Np-4s}}_{\dot{H}^s}}{\|u\|_{L^{p+2}}^{\frac{4s(p+2)}{Np-4s}}},
\]\\

(2) when $f(u)=(|x|^{-\gamma}\ast|u|^2)u$,
\[
E(u^{\lambda_0}):=\max_{t>0}E(u^\lambda)=
\frac{\gamma-2s}{2\gamma}\frac{(4\gamma)^\frac{\gamma}{\gamma-2s}}{\gamma^{\frac{2s}{\gamma-2s}}}
\frac{\|u\|^{\frac{2\gamma}{\gamma-2s}}_{\dot{H}^s}}{(\int_{\mathbb{R}^N} \left(|x|^{-\gamma}\ast|u|^2)(x)|u(x)|^2 dx\right)^{\frac{2s}{\gamma-2s}}},
\]
and $u^{\lambda_0}\in V(c)$. In particular\\
(i) $\lambda_0 <1$ $\Leftrightarrow$ $Q(u)<0$;\\
(ii) $\lambda_0 =1$ $\Leftrightarrow$ $Q(u)=0$;\\
(iii) $\lambda<\lambda_0$ $\Leftrightarrow$ $Q(u^{\lambda})>0$;\\
(iv) $\lambda>\lambda_0$ $\Leftrightarrow$ $Q(u^{\lambda})<0$;\\
where $Q(u)$ is given in \eqref{qupower} or \eqref{quhartree}.
\end{lemma}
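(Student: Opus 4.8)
The plan is to reduce the whole statement to elementary one-variable calculus for the map $\lambda\mapsto E(u^\lambda)$.

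First I would record the scaling identities. From $\widehat{u^\lambda}(\xi)=\lambda^{-N/2}\hat u(\xi/\lambda)$ and changes of variables one gets $\|u^\lambda\|_{L^2}^2=\|u\|_{L^2}^2$, $\|u^\lambda\|_{\dot H^s}^2=\lambda^{2s}\|u\|_{\dot H^s}^2$, $\|u^\lambda\|_{L^{p+2}}^{p+2}=\lambda^{Np/2}\|u\|_{L^{p+2}}^{p+2}$, and $\int_{\mathbb{R}^N}(|x|^{-\gamma}\ast|u^\lambda|^2)|u^\lambda|^2\,dx=\lambda^{\gamma}\int_{\mathbb{R}^N}(|x|^{-\gamma}\ast|u|^2)|u|^2\,dx$. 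In particular $u^\lambda\in S(c)$ for every $\lambda>0$. Since $c>0$ we have $u\not\equiv0$, hence $A:=\|u\|_{\dot H^s}^2>0$ and, in the power case, $B:=\|u\|_{L^{p+2}}^{p+2}>0$, while in the Hartree case $D:=\int_{\mathbb{R}^N}(|x|^{-\gamma}\ast|u|^2)|u|^2\,dx>0$ (positivity of $D$ uses $0<\gamma<N$, so that $|x|^{-\gamma}$ is a positive convolution kernel). Thus the smooth function $g(\lambda):=E(u^\lambda)$ on $(0,\infty)$ equals $\tfrac A2\lambda^{2s}-\tfrac{B}{p+2}\lambda^{Np/2}$ in the power case and $\tfrac A2\lambda^{2s}-\tfrac D4\lambda^{\gamma}$ in the Hartree case.

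The key observation, obtained by differentiating, is that $\lambda\,g'(\lambda)=Q(u^\lambda)$ in both cases, with $Q$ as in \eqref{qupower}, \eqref{quhartree}. In the power case the hypothesis $\frac{4s}{N}<p<\frac{4s}{N-2s}$ gives $\frac{Np}{2}>2s>0$; writing $g'(\lambda)=\lambda^{2s-1}\bigl(sA-\tfrac{Np}{2(p+2)}B\lambda^{Np/2-2s}\bigr)$, the bracket strictly decreases from $sA>0$ to $-\infty$, so $g'$ vanishes at a unique $\lambda_0>0$, with $g'>0$ on $(0,\lambda_0)$ and $g'<0$ on $(\lambda_0,\infty)$. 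Since $g(0^+)=0$, $g>0$ near $0$, and $g(\lambda)\to-\infty$ as $\lambda\to\infty$, $\lambda_0$ is the unique global maximizer of $g$ on $(0,\infty)$. From $g'(\lambda_0)=0$ one solves $\lambda_0^{Np/2-2s}=\frac{2s(p+2)A}{NpB}$; substituting back into $g$ and simplifying the fractional exponents (using $1+\frac{4s}{Np-4s}=\frac{Np}{Np-4s}$) produces exactly the claimed value of $E(u^{\lambda_0})$. Moreover $Q(u^{\lambda_0})=\lambda_0\,g'(\lambda_0)=0$ together with $u^{\lambda_0}\in S(c)$ gives $u^{\lambda_0}\in V(c)$.

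For the equivalences (i)--(iv) it suffices to note that $Q(u^\lambda)=\lambda\,g'(\lambda)$ has the same sign as $g'(\lambda)$; by the sign analysis above, $Q(u^\lambda)>0\Leftrightarrow\lambda<\lambda_0$, $Q(u^\lambda)=0\Leftrightarrow\lambda=\lambda_0$, $Q(u^\lambda)<0\Leftrightarrow\lambda>\lambda_0$, which are precisely (iii), (ii) and (iv); taking $\lambda=1$ and using $u^1=u$ yields (i) and (ii). The Hartree case is identical word for word with $\frac{Np}{2}$ replaced by $\gamma$, the hypothesis $2s<\gamma<\min\{N,4s\}$ ensuring $\gamma>2s>0$. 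There is no genuine obstacle here; the only points requiring a little care are checking $A,B,D>0$ for $u\in S(c)$, the single-crossing (hence unique-maximizer) property of $g'$, and carrying the exponent bookkeeping through to match the stated closed-form constants.
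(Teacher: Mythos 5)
Your proposal is correct and takes essentially the same route as the paper: both reduce the lemma to one-variable calculus for $g(\lambda)=E(u^\lambda)$, use the identity $\lambda g'(\lambda)=Q(u^\lambda)$ to locate the unique maximizer $\lambda_0$, and read off $u^{\lambda_0}\in V(c)$ and the sign conditions (i)--(iv) from $g'$ (the paper phrases (i)--(ii) through the equivalent identity $Q(u)=s\|u\|_{\dot H^s}^2\bigl(1-\lambda_0^{\frac{4s-Np}{2}}\bigr)$). One minor remark: carrying your Hartree bookkeeping through gives the constant $\frac{\gamma-2s}{2\gamma}\,\frac{(4s)^{\frac{2s}{\gamma-2s}}}{\gamma^{\frac{2s}{\gamma-2s}}}$ rather than the $\frac{\gamma-2s}{2\gamma}\,\frac{(4\gamma)^{\frac{\gamma}{\gamma-2s}}}{\gamma^{\frac{2s}{\gamma-2s}}}$ printed in the statement, an apparent typo in the paper that affects neither argument.
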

\begin{proof}
We only prove the case $f(u)=|u|^{p}u$. The case $f(u)=(|x|^{-\gamma}\ast|u|^2)u$ is similar. Firstly, we define
\[
g(\lambda):=E(u^\lambda)=\frac{\lambda^{2s}}{2}\|u\|^2_{\dot{H}^s}
-\frac{\lambda^{\frac{Np}{2}}}{p+2}
\int_{\mathbb{R}^N} |u(x)|^{p+2} dx.
\]
Then, $g(\lambda)>0$ for sufficiently small $\lambda>0$ and $g(\lambda)\rightarrow -\infty$ as $\lambda\rightarrow \infty$.
This implies that $g(\lambda)$ has a unique critical point $\lambda_0 >0$ corresponding to its maximum on $(0,\infty)$, and
\[
E(u^{\lambda_0})=\max_{t>0}E(u^\lambda),~~ g'(\lambda_0)  =s\lambda^{2s-1}_0\|u\|^2_{\dot{H}^s}
-\frac{Np}{2(p+2)}\lambda^{\frac{Np-2}{2}}_0\int_{\mathbb{R}^N} |u(x)|^{p+2} dx=0,
\]
which yields
$$Q(u^{\lambda_0})=s \lambda_0^{2s}\|u\|^2_{\dot{H}^s}-\frac{Np}{2(p+2)}
\lambda^{\frac{Np}{2}}_0\int_{\mathbb{R}^N} |u(x)|^{p+2} dx=0.$$
We consequently obtain $u^{\lambda_0}\in V(c)$. Moreover,
\[
Q(u)=s\|u\|^2_{\dot{H}^s}-\frac{Np}{2(p+2)}\int_{\mathbb{R}^N} |u(x)|^{p+2} dx=s\|u\|^2_{\dot{H}^s}(1-\lambda^{\frac{4s-Np}{2}}_0),
\]
which concludes (i) and (ii). (iii) and (iv) follow from the fact that $g'(\lambda)=\lambda^{-1}Q(u^\lambda)$.
\end{proof}

\begin{lemma}\label{w=0}
Let $f(u)=|u|^{p}u$ with $\frac{4s}{N}<p<\frac{4s}{N-2s}$ or $f(u)=(|x|^{-\gamma}\ast|u|^2)u$ with $2s<\gamma<\min\{N,4s\}$. If $u\in H^s$ is a weak solution of problem \eqref{elliptic equation}, then $Q(u)=0$. Moreover, $u=0$ if $\omega\leq 0$.
\end{lemma}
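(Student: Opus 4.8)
The plan is to obtain $Q(u)=0$ by combining two scalar identities that every weak solution $u$ of \eqref{elliptic equation} satisfies — the Nehari-type identity coming from testing the equation against $u$ itself, and the Pohozaev identity of Lemma \ref{lemma Pohozaev identity} — and then eliminating the Lagrange-multiplier term $\omega\|u\|_{L^2}^2$ between them. Once $Q(u)=0$ is established, the vanishing statement for $\omega\le 0$ will follow by inserting $Q(u)=0$ back into the Nehari identity and inspecting the sign of the resulting relation between $\omega\|u\|_{L^2}^2$ and $\|u\|_{\dot H^s}^2$.

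Concretely, for $f(u)=|u|^pu$ I would first note that since $\frac{4s}{N}<p<\frac{4s}{N-2s}$ the action $J$ is $C^1$ on $H^s$, so a weak solution satisfies $J'(u)=0$ and in particular $\langle J'(u),u\rangle=0$, which reads $\|u\|_{\dot H^s}^2+\omega\|u\|_{L^2}^2=\|u\|_{L^{p+2}}^{p+2}$. Pairing this with the Pohozaev identity $(N-2s)\|u\|_{\dot H^s}^2+N\omega\|u\|_{L^2}^2=\frac{2N}{p+2}\|u\|_{L^{p+2}}^{p+2}$, I would solve the first relation for $\omega\|u\|_{L^2}^2$ and substitute into the second; the $\dot H^s$ terms collapse to $-2s\|u\|_{\dot H^s}^2$ and the $L^{p+2}$ terms to $\frac{Np}{p+2}\|u\|_{L^{p+2}}^{p+2}$, so that $-2s\|u\|_{\dot H^s}^2+\frac{Np}{p+2}\|u\|_{L^{p+2}}^{p+2}=0$, i.e. $-2Q(u)=0$. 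For the last assertion, replacing $\|u\|_{L^{p+2}}^{p+2}$ by $\frac{2s(p+2)}{Np}\|u\|_{\dot H^s}^2$ (from $Q(u)=0$) in the Nehari identity gives
\[
\omega\|u\|_{L^2}^2=\frac{4s-p(N-2s)}{Np}\,\|u\|_{\dot H^s}^2 .
\]
Since $p<\frac{4s}{N-2s}$, the coefficient on the right is strictly positive, hence the right-hand side is $\ge 0$; if $\omega\le 0$ the left-hand side is $\le 0$, which forces both sides to vanish, so $\|u\|_{\dot H^s}=0$, and as $u\in L^2(\mathbb R^N)$ this yields $u\equiv 0$.

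The Hartree case $f(u)=(|x|^{-\gamma}\ast|u|^2)u$ would be treated verbatim with the quantity $\int_{\mathbb R^N}(|x|^{-\gamma}\ast|u|^2)(x)|u(x)|^2\,dx$ playing the role of $\|u\|_{L^{p+2}}^{p+2}$: the test-function identity and the corresponding Pohozaev identity from Lemma \ref{lemma Pohozaev identity} combine to $-2s\|u\|_{\dot H^s}^2+\frac{\gamma}{2}\int_{\mathbb R^N}(|x|^{-\gamma}\ast|u|^2)|u|^2\,dx=0$, i.e. $Q(u)=0$; then $Q(u)=0$ gives $\omega\|u\|_{L^2}^2=\frac{4s-\gamma}{\gamma}\|u\|_{\dot H^s}^2$, whose coefficient is positive because $\gamma<4s$, and the same sign argument concludes. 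I do not expect a serious obstacle here — the deduction is linear algebra once the two identities are in place. The only point worth a line of justification is the admissibility of $u$ as a test function and the applicability of the Pohozaev identity, both of which rest on the $H^s$-subcriticality of $f$ (already incorporated in Lemma \ref{lemma Pohozaev identity}); it is also worth stressing that the conclusion uses only the \emph{upper} constraints $p<\frac{4s}{N-2s}$ and $\gamma<4s$, not the $L^2$-supercriticality.
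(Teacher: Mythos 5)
Your proof is correct and follows essentially the same route as the paper: combine the Nehari identity $\langle J'(u),u\rangle=0$ with the Pohozaev identity of Lemma \ref{lemma Pohozaev identity}, eliminate $\omega\|u\|_{L^2}^2$ to obtain $Q(u)=0$, and then read off $\omega\|u\|_{L^2}^2$ as a positive multiple of $\|u\|_{\dot H^s}^2$ (with coefficient positive because $p<\frac{4s}{N-2s}$, resp.\ $\gamma<4s$) so that $\omega\le 0$ forces $u\equiv 0$. No gaps; the algebra and the sign argument match the paper's proof in both the power and Hartree cases.
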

\begin{proof}
When $f(u)=|u|^{p}u$,
by Lemma \ref{lemma Pohozaev identity}, the following Pohozaev identity holds for $u\in H^s$,
\[
(N-2s)\|u\|^2_{\dot{H}^s}+N\omega \|u\|_{L^2}^2=\frac{2N}{p+2}\|u\|^{p+2}_{L^{p+2}}.
\]
Multiplying \eqref{elliptic equation} by $u$ and integrating over $\mathbb{R}^N$, we derive a second identity
\[
\|u\|^2_{\dot{H}^s}+ \omega\|u\|_{L^2}^2=\|u\|^{p+2}_{L^{p+2}}.
\]
Thus we have immediately
\[
Q(u)=\|u\|^2_{\dot{H}^s}-\frac{Np}{2s(p+2)}\|u\|^{p+2}_{L^{p+2}}=0.
\]
Also after simple calculations, we obtain
\[
\omega\|u\|_{L^2}^2=(\frac{2s(p+2)}{Np}-1)\|u\|^2_{\dot{H}^s}.
\]\\
(1) If $\omega<0$, we get $u\equiv 0$ immediately;\\
(2) If $\omega=0$, $\|u\|^2_{\dot{H}^s}=0$ then $u\equiv 0$.\\
The proof for $f(u)=(|x|^{-\gamma}\ast|u|^2)u$ is similar, so we omit the details.
\end{proof}

\begin{lemma}\label{critical point}
Let $f(u)=|u|^{p}u$ with $\frac{4s}{N}<p<\frac{4s}{N-2s}$ or $f(u)=(|x|^{-\gamma}\ast|u|^2)u$ with $2s<\gamma<\min\{N,4s\}$. If $u$ is a critical point of $E|_{S(c)}$, then $E'(u)+\omega_c u=0$ in $H^{-s}$ for some $\omega_c>0$.
\end{lemma}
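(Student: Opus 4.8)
The plan is to combine the Lagrange multiplier rule with Lemma \ref{w=0}, in two steps.

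Step 1 (extract the multiplier). Under the stated assumptions on $f$, the energy $E$ is of class $C^1$ on $H^s$, and the mass functional $G(u):=\|u\|_{L^2}^2-c$ is of class $C^1$ with $G'(u)=2u$, which is nonzero on $S(c)$ precisely because $c>0$ forbids $u\equiv 0$. Hence $S(c)=G^{-1}(0)$ is a $C^1$ submanifold of $H^s$ of codimension one, and the Lagrange multiplier theorem applies: if $u$ is a critical point of $E|_{S(c)}$, there is a real number $\omega_c$ with $E'(u)=-\omega_c u$, that is $E'(u)+\omega_c u=0$ in $H^{-s}$. Since $E'(u)=(-\Delta)^s u-f(u)$ in $H^{-s}$, this says exactly that $u\in H^s$ is a weak solution of \eqref{elliptic equation} with $\omega=\omega_c$.

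Step 2 (fix the sign). It remains to rule out $\omega_c\le 0$. Because $u\in S(c)$ with $c>0$ we have $u\not\equiv 0$, while Lemma \ref{w=0} asserts that any $H^s$ weak solution of \eqref{elliptic equation} with $\omega\le 0$ must vanish identically; hence $\omega_c>0$, which is the claim. (Unpacking Lemma \ref{w=0}, the positivity ultimately rests on eliminating the nonlinear term between the Pohozaev identity of Lemma \ref{lemma Pohozaev identity} and the identity obtained by testing \eqref{elliptic equation} with $u$: in the power case one gets $\omega_c\|u\|_{L^2}^2=\big(\tfrac{2s(p+2)}{Np}-1\big)\|u\|_{\dot{H}^s}^2$, and in the Hartree case $\omega_c\|u\|_{L^2}^2=\tfrac{4s-\gamma}{\gamma}\|u\|_{\dot{H}^s}^2$; the coefficient is strictly positive because $p<\tfrac{4s}{N-2s}$, resp. $\gamma<4s$, and $\|u\|_{\dot{H}^s}^2>0$ since $u\in H^s\setminus\{0\}$ cannot be a nonzero constant.)

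I do not expect a genuine obstacle here. The only mildly delicate points are that the $C^1$-regularity of $E$ on $H^s$ — and hence the validity of the Lagrange multiplier rule — must be known throughout the admissible ranges of $p$ and $\gamma$, which is recorded in the introduction, and that $u\not\equiv 0$, which is automatic from the normalization $\|u\|_{L^2}^2=c>0$. The essential structural input for the strict positivity of $\omega_c$ is the energy-subcriticality $p<\tfrac{4s}{N-2s}$ (resp. $\gamma<4s$), which is what makes Lemma \ref{w=0} applicable.
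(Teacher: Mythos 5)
Your proposal is correct and follows essentially the same route as the paper: the Lagrange multiplier rule produces $\omega_c\in\mathbb{R}$, and the sign is then fixed by combining the identity from testing \eqref{elliptic equation} with $u$ against the Pohozaev identity of Lemma \ref{lemma Pohozaev identity}. The only cosmetic difference is that the paper redoes this computation inline, obtaining $\omega_c=\frac{(4s+2ps-pN)}{Npc}\|u\|_{\dot{H}^s}^2>0$, whereas you delegate it to Lemma \ref{w=0}, which contains the same calculation.
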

\begin{proof}
Since $u$ is a critical point of $E|_{S(c)}$, there exists $\omega_c\in \mathbb{R}$ such that $E'(u)+\omega_cu=0$ in $H^{-s}$. Thus
\begin{align}\label{2.1}
\langle E'(u)+\omega_cu,u\rangle=\|u\|^2_{\dot{H}^s}+ \omega_c\|u\|_{L^2}^2-\|u\|^{p+2}_{L^{p+2}}=0.
\end{align}
By Lemma \ref{lemma Pohozaev identity}, $u$ satisfies
\begin{align}\label{p1}
(N-2s)\|u\|^2_{\dot{H}^s}+N\omega \|u\|_{L^2}^2=\frac{2N}{p+2}\|u\|^{p+2}_{L^{p+2}}.
\end{align}
Combining \eqref{2.1} with \eqref{p1}, we have
\begin{align*}
\omega_c=\frac{(4s+2ps-pN)\|u\|^2_{\dot{H}^s}}{Npc}>0
\end{align*}
for $\frac{4s}{N}<p<\frac{4s}{N-2s}$. The proof for $f(u)=(|x|^{-\gamma}\ast|u|^2)u$ is similar, so we omit the details.
\end{proof}

Next, we analyze the property of the function $c\rightarrow m(c)$.
\begin{lemma}\label{E inf}
Let $f(u)=|u|^{p}u$ with $\frac{4s}{N}<p<\frac{4s}{N-2s}$ or $f(u)=(|x|^{-\gamma}\ast|u|^2)u$ with $2s<\gamma<\min\{N,4s\}$. Then
\[
m(c)=\inf_{u\in S(c)}\max_{t>0} E(u^\lambda),
\]
where $u^\lambda=\lambda^{N/2}u(\lambda x)$.
\end{lemma}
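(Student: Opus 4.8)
The plan is to deduce this identity directly from Lemma~\ref{lemma max energy}, which already describes the behaviour of the fiber map $\lambda\mapsto E(u^\lambda)$ for each fixed $u\in S(c)$. The first observation is that the scaling $u\mapsto u^\lambda$ leaves the $L^2$-norm invariant, $\|u^\lambda\|_{L^2}^2=\|u\|_{L^2}^2$, so $u^\lambda\in S(c)$ for every $\lambda>0$ whenever $u\in S(c)$; in particular $\max_{\lambda>0}E(u^\lambda)$ is well defined on $S(c)$ and, by Lemma~\ref{lemma max energy}, it is attained at the unique $\lambda_0=\lambda_0(u)>0$ for which $u^{\lambda_0}\in V(c)$. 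I would treat the two inequalities separately.

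For $m(c)\le\inf_{u\in S(c)}\max_{\lambda>0}E(u^\lambda)$: given any $u\in S(c)$, put $v:=u^{\lambda_0(u)}$. Then $v\in V(c)$ by Lemma~\ref{lemma max energy}, hence $E(v)\ge m(c)$ by the definition \eqref{minimization problem submanifold} of $m(c)$. Since $\max_{\lambda>0}E(u^\lambda)=E(u^{\lambda_0(u)})=E(v)\ge m(c)$ and $u\in S(c)$ was arbitrary, taking the infimum over $S(c)$ gives this inequality. For the reverse inequality, I would simply restrict the right-hand infimum to the smaller set $V(c)\subset S(c)$: if $u\in V(c)$ then $Q(u)=0$, so by part (ii) of Lemma~\ref{lemma max energy} the maximiser is $\lambda_0(u)=1$, whence $\max_{\lambda>0}E(u^\lambda)=E(u^1)=E(u)$. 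Therefore
\[
\inf_{u\in S(c)}\max_{\lambda>0}E(u^\lambda)\le\inf_{u\in V(c)}\max_{\lambda>0}E(u^\lambda)=\inf_{u\in V(c)}E(u)=m(c).
\]
Combining the two inequalities yields $m(c)=\inf_{u\in S(c)}\max_{\lambda>0}E(u^\lambda)$. The two cases $f(u)=|u|^pu$ and $f(u)=(|x|^{-\gamma}\ast|u|^2)u$ are handled identically, since Lemma~\ref{lemma max energy} covers both.

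Since every step is an immediate consequence of Lemma~\ref{lemma max energy}, there is no genuine obstacle here; the only point worth stating explicitly is the scaling invariance of the constraint $S(c)$, which guarantees that the whole fiber $\{u^\lambda:\lambda>0\}$ stays inside $S(c)$, so that the right-hand side is a bona fide mini-max value over curves lying in $S(c)$. This reformulation is what makes the later minimization problem \eqref{minimization1} tractable and what will be used to obtain the sharp threshold in Theorem~\ref{theorem sharp}.
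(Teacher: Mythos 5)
Your argument is correct and follows essentially the same route as the paper: both inequalities are obtained from Lemma \ref{lemma max energy}, using that for $u\in S(c)$ the unique maximiser $\lambda_0(u)$ of the fiber map lands $u^{\lambda_0}$ in $V(c)$, and that for $u\in V(c)$ one has $\lambda_0(u)=1$ so $\max_{\lambda>0}E(u^\lambda)=E(u)$. No further comment is needed.
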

\begin{proof}
Firstly, we notice that the minimizing problem in \eqref{minimization problem submanifold} is well-defined. Indeed, when $f(u)=|u|^{p}u$ and $u\in S(c)$, we have
\[
E(u)=E(u)-\frac{2}{Np}Q(u)=
\frac{Np-4s}{2Np}\|u\|_{\dot{H}^s}^2>0.
\]
When $f(u)=(|x|^{-\gamma}\ast|u|^2)u$ and $u\in S(c)$, it follows that
\[
E(u)=E(u)-\frac{1}{\gamma}Q(u)=
\frac{\gamma-2s}{2\gamma}\|u\|_{\dot{H}^s}^2>0.
\]
Thus, we denote $m(c):=\inf_{u\in V(c)}E(u)$.
By Lemma \ref{lemma max energy}, for any $u\in V(c)$,
\begin{align*}
E(u)=\max_{\lambda>0}E(u^\lambda)\geq \inf_{v\in S(c)}\max_{\lambda>0}E(u^\lambda),
\end{align*}
then
\begin{align*}
\inf_{u\in V(c)}E(u)\geq \inf_{u\in S(c)}\max_{\lambda>0}E(u^\lambda).
\end{align*}
On the other hand, by Lemma \ref{lemma max energy}, for any $u\in S(c)$, there exists a unique $\lambda_0>0$ such that $u^{\lambda_0}\in V(c)$ and
\begin{align*}
\max_{\lambda>0}E(u^\lambda)=E(u^{\lambda_0})\geq \inf_{u\in V(c)}E(u).
\end{align*}
This implies that
\begin{align*}
\inf_{u\in S(c)}\max_{\lambda>0}E(u^\lambda)\geq \inf_{u\in V(c)}E(u).
\end{align*}
Thus, we have $m(c)=\inf_{u\in S(c)}\max_{t>0} E(u^\lambda)$.
The proof for $f(u)=(|x|^{-\gamma}\ast|u|^2)u$ is similar, so we omit the details.
\end{proof}
\begin{lemma}\label{c=m(c)}
Let $f(u)=|u|^{p}u$ with $\frac{4s}{N}<p<\frac{4s}{N-2s}$ or $f(u)=(|x|^{-\gamma}\ast|u|^2)u$ with $2s<\gamma<\min\{N,4s\}$. Then
the function $c\rightarrow m(c)$ is strictly decreasing on $(0,\infty)$.
\end{lemma}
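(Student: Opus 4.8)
The plan is to exploit the characterization $m(c)=\inf_{u\in S(c)}\max_{\lambda>0}E(u^\lambda)$ from Lemma \ref{E inf}, together with a scaling argument that relates the $L^2$-sphere $S(c)$ to the sphere $S(c')$ for $c'>c$. The natural device is the multiplicative scaling $u\mapsto \theta^{1/2}u$, which sends $S(c)$ to $S(\theta c)$ while changing the $\dot H^s$ norm by a factor $\theta$ and the $L^{p+2}$ (respectively Hartree) term by a factor $\theta^{(p+2)/2}$ (respectively $\theta^2$). Since $p+2>2$ (and $2>1$ for the Hartree term), this scaling is ``favorable'' for the nonlinear term, so the maximum over the dilation parameter $\lambda$ strictly decreases.

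Concretely, fix $0<c_1<c_2$ and write $\theta=c_2/c_1>1$. Given any $u\in S(c_1)$ with associated maximizer $\lambda_0=\lambda_0(u)>0$ from Lemma \ref{lemma max energy}, I would consider $v:=\theta^{1/2}u\in S(c_2)$. In the power case one computes
\begin{align*}
\max_{\lambda>0}E(v^\lambda)
&=\max_{\lambda>0}\Big(\frac{\theta\lambda^{2s}}{2}\|u\|_{\dot H^s}^2-\frac{\theta^{(p+2)/2}\lambda^{Np/2}}{p+2}\|u\|_{L^{p+2}}^{p+2}\Big).
\end{align*}
Using the explicit formula for the maximum in Lemma \ref{lemma max energy}, which has the form $\mathrm{const}\cdot \|u\|_{\dot H^s}^{2Np/(Np-4s)}/\|u\|_{L^{p+2}}^{4s(p+2)/(Np-4s)}$, replacing $\|u\|_{\dot H^s}^2$ by $\theta\|u\|_{\dot H^s}^2$ and $\|u\|_{L^{p+2}}^{p+2}$ by $\theta^{(p+2)/2}\|u\|_{L^{p+2}}^{p+2}$ multiplies the value by
\[
\theta^{\frac{Np}{Np-4s}}\cdot\theta^{-\frac{2s}{Np-4s}}=\theta^{\frac{Np-2s}{Np-4s}}\cdot\theta^{-\frac{2s}{Np-4s}}\quad\text{— more precisely }\ \theta^{\frac{Np}{Np-4s}-\frac{2s}{Np-4s}}=\theta^{\frac{Np-2s}{Np-4s}}<1?
\]
Here one must be careful with the sign: since $Np>4s$, the exponent $\frac{Np}{Np-4s}>0$ multiplies $\theta>1$ upward, while $\theta^{(p+2)/2}$ in the denominator (raised to $\frac{2s}{Np-4s}$) pushes downward; the net exponent is $\frac{Np-2s(p+2)/1\cdot\ \ldots}{Np-4s}$, and one checks it is strictly negative precisely because $(p+2)/2>1$. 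Thus $\max_{\lambda>0}E(v^\lambda)=\theta^{-\delta}\max_{\lambda>0}E(u^\lambda)$ for some $\delta=\delta(N,p,s)>0$ (and similarly an explicit negative power $\theta^{-\delta}$ with $\delta>0$ in the Hartree case, using $2>1$), so $\max_{\lambda>0}E(v^\lambda)<\max_{\lambda>0}E(u^\lambda)$.

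Taking the infimum over $u\in S(c_1)$ and noting that $v=\theta^{1/2}u$ ranges over all of $S(c_2)$ as $u$ ranges over $S(c_1)$, I obtain
\[
m(c_2)=\inf_{v\in S(c_2)}\max_{\lambda>0}E(v^\lambda)\le \theta^{-\delta}\inf_{u\in S(c_1)}\max_{\lambda>0}E(u^\lambda)=\theta^{-\delta}m(c_1)<m(c_1),
\]
where strictness uses $m(c_1)>0$ (established in the proof of Lemma \ref{E inf}) and $\theta>1$. This proves strict monotonicity. The main obstacle is purely bookkeeping: verifying that the net power of $\theta$ is genuinely negative in both the power and Hartree cases, which reduces to the inequalities $(p+2)/2>1$ and $2>1$ respectively — i.e. to the nonlinearity being mass-supercritical and to $\gamma>2s$ — so no compactness or regularity input is needed beyond Lemmas \ref{lemma max energy} and \ref{E inf}. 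I would also remark that the same computation shows $m(c)\to 0$ as $c\to\infty$ and $m(c)\to\infty$ as $c\to 0^+$, which feeds into Theorem \ref{theorem existence}.
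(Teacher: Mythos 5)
Your overall strategy is sound and is essentially a variant of the paper's own argument: both combine the characterization $m(c)=\inf_{u\in S(c)}\max_{\lambda>0}E(u^\lambda)$ from Lemma \ref{E inf} with the explicit maximum formula of Lemma \ref{lemma max energy} and a scaling that carries $S(c_1)$ onto $S(c_2)$. The paper uses the $\dot H^s$-preserving rescaling $u_2(x)=\theta^{\frac{2s-N}{4s}}u_1(\theta^{-\frac{1}{2s}}x)$ together with an almost-optimal choice of $u_1$, whereas your amplitude scaling $v=\theta^{1/2}u$ is a bijection between the two spheres and even yields the exact power law $m(\theta c)=\theta^{-\delta}m(c)$, which is a slightly cleaner route to the same monotonicity.

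However, the step you yourself call the crux, namely checking that the net power of $\theta$ is negative, is carried out incorrectly. Under $v=\theta^{1/2}u$ the numerator of the maximum formula picks up the factor $\theta^{\frac{Np}{Np-4s}}$ while the denominator picks up $\theta^{\frac{2s(p+2)}{Np-4s}}$ (your display drops the factor $p+2$ and ends with an exponent marked by a question mark), so the net exponent is $\frac{Np-2s(p+2)}{Np-4s}$. Its negativity is not equivalent to $(p+2)/2>1$, i.e.\ to $p>0$: it is equivalent to $p(N-2s)<4s$, that is to the Sobolev-subcritical upper bound $p<\frac{4s}{N-2s}$, while the mass-supercritical lower bound $Np>4s$ is what makes the denominator positive and Lemma \ref{lemma max energy} applicable. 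Similarly, in the Hartree case the net exponent is $\frac{\gamma-4s}{\gamma-2s}$, negative because $\gamma<4s$, not because ``$2>1$''; if one had $p\geq\frac{4s}{N-2s}$ the inequality $(p+2)/2>1$ would still hold but the exponent would be nonnegative and your argument would collapse. With the corrected bookkeeping your conclusion $\max_{\lambda>0}E(v^\lambda)=\theta^{-\delta}\max_{\lambda>0}E(u^\lambda)$ with $\delta=\frac{2s(p+2)-Np}{Np-4s}>0$ is true and the proof goes through. One further small point: strictness of $m(c_2)<m(c_1)$ requires $m(c_1)>0$, and the proof of Lemma \ref{E inf} only gives $E(u)>0$ pointwise on $V(c)$, not a positive infimum; you should invoke the Gagliardo--Nirenberg lower bound (as in the first step of Proposition \ref{proposition existence}) to secure this, noting that the paper's own proof has the same implicit requirement.
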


\begin{proof}
When $f(u)=|u|^{p}u$, for any $0<c_1<c_2<+\infty$, there exists $u_1\in S(c_1)$ such that
\begin{align*}
\max_{\lambda>0}E(u_1^\lambda)<\theta^{\frac{2ps-Np+4s}{Np-4s}} m(c_1),
\end{align*}
where $\theta=\frac{c_2}{c_1}$.
Set
\[
u_2(x)=\theta^{\frac{2s-N}{4s}}u_1(\theta^{-\frac{1}{2s}}x),
\]
then
\[
\|u_2\|^2_{\dot{H}^s}=\|u_1\|^2_{\dot{H}^s}~and~\|u_2\|^2_{L^2}=\theta\|u_1\|^{2}_{L^{2}}=c_2
,\]
\[
\|u_2\|^{p+2}_{L^{p+2}}=\theta^{\frac{2ps+4s-Np}{4s}}\|u_1\|^{p+2}_{L^{p+2}}.
\]
By Lemma \ref{E inf}, we have
\begin{align*}
m(c_2)&\leq \max_{\lambda>0}E(u^\lambda_2)\\
&=\left(\frac{Np-4s}{2Np}\right)\left(\frac{2s(p+2)}{Np}\right)^{\frac{4s}{Np-4s}}
\frac{\|u_2\|^{\frac{2Np}{Np-4s}}_{\dot{H}^s}}{\|u_2\|^{\frac{4s(p+2)}{Np-4s}}_{L^{p+2}}}\\
&=\left(\frac{Np-4s}{2Np}\right)\left(\frac{2s(p+2)}{Np}\right)^{\frac{4s}{Np-4s}}\theta^{\frac{Np-4s-2ps}{Np-4s}}
\frac{\|u_1\|^{\frac{2Np}{Np-4s}}_{\dot{H}^s}}{\|u_1\|^{\frac{4s(p+2)}{Np-4s}}_{L^{p+2}}}\\
&=\theta^{\frac{Np-4s-2ps}{Np-4s}}\max_{\lambda>0}E(u_1^\lambda)\\
&<\theta^{\frac{Np-4s-2ps}{Np-4s}}\theta^{\frac{2ps-Np+4s}{Np-4s}}m(c_1)\\
&=m(c_1)
\end{align*}
holds for $0<c_1,c_2<+\infty$. The proof for $f(u)=(|x|^{-\gamma}\ast|u|^2)u$ is similar, so we omit the details.
\end{proof}

Now, we solve the minimization problem \eqref{minimization problem submanifold}. To this end, we consider the following minimization problem:
\begin{equation}\label{minimization1}
\widetilde{m}(c)=\inf\{\widetilde{E}(v):~v\in S(c),~Q(v)\leq 0\},
\end{equation}
where
\begin{equation}\label{s1u}
\widetilde{E}(v):=E(v)-\frac{2}{Np}Q(v)=
\frac{Np-4s}{2Np}\|v\|_{\dot{H}^s}^2,~~if~f(u)=|u|^pu,
\end{equation}
\begin{equation}\label{s1u}
\widetilde{E}(v):=E(v)-\frac{1}{\gamma}Q(v)=
\frac{\gamma-2s}{2\gamma}\|v\|_{\dot{H}^s}^2,~~if~f(u)=(|x|^{-\gamma}\ast|u|^2)u.
\end{equation}
\begin{proposition}\label{proposition existence}
Let $N\geq 2$, $f(u)=|u|^{p}u$ with $\frac{4s}{N}<p<\frac{4s}{N-2s}$ or $f(u)=(|x|^{-\gamma}\ast|u|^2)u$ with $2s<\gamma<\min\{N,4s\}$.
 Then there exists $u\in V(c)$ and $\widetilde{E}(u)=\widetilde{m}(c)$.
\end{proposition}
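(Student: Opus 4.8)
The plan is to produce a minimizer of the relaxed problem \eqref{minimization1} by a concentration--compactness argument and then to show, via the fibering map of Lemma \ref{lemma max energy}, that this minimizer in fact lies on $V(c)$. \emph{Step 1 (properties of $\widetilde m(c)$).} First I would record two facts. (a) $\widetilde m(c)=m(c)$: every $v\in V(c)$ is admissible for \eqref{minimization1} and satisfies $\widetilde E(v)=E(v)$, so $\widetilde m(c)\le m(c)$; conversely, for admissible $v$ (that is, $v\in S(c)$ with $Q(v)\le0$), Lemma \ref{lemma max energy} yields $\lambda_0\le1$ with $v^{\lambda_0}\in V(c)$ and $\widetilde E(v^{\lambda_0})=\lambda_0^{2s}\widetilde E(v)\le\widetilde E(v)$, whence $m(c)\le\widetilde m(c)$. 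Consequently, by Lemma \ref{c=m(c)}, $c\mapsto\widetilde m(c)$ is strictly decreasing on $(0,\infty)$. (b) Applying the sharp Gagliardo--Nirenberg inequality \eqref{gn inequality power} (resp.\ \eqref{gn inequality hartree}) to $s\|v\|_{\dot H^s}^2\le\frac{Np}{2(p+2)}\|v\|_{L^{p+2}}^{p+2}$ (resp.\ the Hartree analogue) and using $\frac{pN}{2s}>2$ (resp.\ $\frac{\gamma}{s}>2$) gives, on the admissible set, a lower bound $\|v\|_{\dot H^s}^2\ge\delta(c)>0$ and a positive lower bound for $\|v\|_{L^{p+2}}^{p+2}$ (resp.\ for the Hartree energy). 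In particular $\widetilde m(c)>0$ and every minimizing sequence is bounded in $H^s$.

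\emph{Step 2 (compactness up to translations).} Take a minimizing sequence $\{u_n\}$. By Step 1(b), $\liminf_n\|u_n\|_{L^{p+2}}>0$; in the Hartree case one first reduces the convolution energy to an $L^q$-norm with $q=\frac{4N}{2N-\gamma}\in(2,\frac{2N}{N-2s})$ via Hardy--Littlewood--Sobolev. Then Lemma \ref{lemma compactness lemma I} provides $x_n\in\mathbb{R}^N$ and $U\in H^s\setminus\{0\}$ with $v_n:=u_n(\cdot+x_n)\rightharpoonup U$ in $H^s$; by translation invariance of $\widetilde E$, $Q$ and $S(c)$, $\{v_n\}$ is still minimizing with $Q(v_n)\le0$. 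Set $w_n=v_n-U\rightharpoonup0$, $c'=\|U\|_{L^2}^2\in(0,c]$ and $c''=c-c'$. Using weak convergence in $\dot H^s$ and the Brezis--Lieb lemma (Lemma \ref{lemma Brezis Lieb}; up to a further subsequence $v_n\to U$ a.e.), I would obtain
\[
\|v_n\|_{\dot H^s}^2=\|U\|_{\dot H^s}^2+\|w_n\|_{\dot H^s}^2+o(1),\qquad Q(v_n)=Q(U)+Q(w_n)+o(1),
\]
hence $\widetilde m(c)=\widetilde E(U)+\ell$ along a subsequence, where $\ell:=\lim_n\widetilde E(w_n)\ge0$ and $\widetilde E(U)>0$ (as $U\ne0$).

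\emph{Step 3 (excluding dichotomy and landing on $V(c)$).} I would split according to the sign of $Q(U)$. If $Q(U)>0$, then $Q(w_n)<0$ for large $n$, and $c''>0$ (otherwise $w_n\to0$ in $L^2$, so $\|w_n\|_{L^{p+2}}\to0$ by Gagliardo--Nirenberg and $\liminf_n Q(v_n)\ge Q(U)>0$, contradicting $Q(v_n)\le0$). Thus $w_n\in S(c_n)$ with $c_n=\|w_n\|_{L^2}^2\to c''<c$ and $Q(w_n)\le0$, so $\widetilde E(w_n)\ge\widetilde m(c_n)\ge\widetilde m(c)$ for $n$ large, forcing $\ell\ge\widetilde m(c)$ and hence $\widetilde m(c)=\widetilde E(U)+\ell>\widetilde m(c)$, a contradiction. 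Therefore $Q(U)\le0$, so $U$ is admissible at mass $c'$ and $\widetilde E(U)\ge\widetilde m(c')\ge\widetilde m(c)$; combined with $\widetilde E(U)\le\liminf_n\widetilde E(v_n)=\widetilde m(c)$ and the strict monotonicity of $\widetilde m$, this gives $c'=c$ (so $U\in S(c)$) and $\widetilde E(U)=\widetilde m(c)$. Finally, if $Q(U)<0$, Lemma \ref{lemma max energy} yields $\lambda_0<1$ with $U^{\lambda_0}\in V(c)$ and $\widetilde E(U^{\lambda_0})=\lambda_0^{2s}\widetilde E(U)<\widetilde E(U)=\widetilde m(c)$, which contradicts $\widetilde E(U^{\lambda_0})\ge\widetilde m(c)$; hence $Q(U)=0$, i.e.\ $U\in V(c)$ and $\widetilde E(U)=\widetilde m(c)$. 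The Hartree case is treated identically, replacing \eqref{gn inequality power} by \eqref{gn inequality hartree}, using the convolution form of Brezis--Lieb, and letting $\gamma/s$ play the role of $pN/2s$.

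\emph{The main obstacle} is the loss of compactness for minimizing sequences of \eqref{minimization1}: mass may escape to spatial infinity or split into separating bumps. Translating by $x_n$ (Lemma \ref{lemma compactness lemma I}) removes the first difficulty, while the strict monotonicity of $c\mapsto\widetilde m(c)$ — a strict-subadditivity-type obstruction, ultimately rooted in the scaling computation behind Lemma \ref{c=m(c)} — rules out the second. A secondary point is that the weak limit $U$ a priori only satisfies $Q(U)\le0$; it is precisely the fibering structure of Lemma \ref{lemma max energy}, and in particular the equivalence $Q(U)=0\Leftrightarrow\lambda_0=1$, that upgrades this to $U\in V(c)$.
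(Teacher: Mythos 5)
Your proposal is correct and follows essentially the same route as the paper: minimize the relaxed functional $\widetilde{E}$ over $\{v\in S(c):Q(v)\le 0\}$, get positivity of $\widetilde{m}(c)$ from the sharp Gagliardo--Nirenberg inequality, apply Lemma \ref{lemma compactness lemma I} and Brezis--Lieb splitting to a translated minimizing sequence, exclude dichotomy/vanishing via the strict monotonicity of $c\mapsto m(c)=\widetilde{m}(c)$, and finally use the fibering map of Lemma \ref{lemma max energy} to upgrade $Q(U)\le 0$ to $Q(U)=0$. The only deviations are cosmetic: you prove $\widetilde{m}(c)=m(c)$ explicitly and dispose of the case $Q(U)>0$ with $\|U\|_{L^2}^2=c$ by a direct $L^{p+2}$-vanishing argument rather than the paper's $\dot{H}^s$-convergence argument, which if anything is slightly cleaner.
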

\begin{proof}
We only prove this result for $f(u)=|u|^pu$.
We first show that $\widetilde{m}(c)>0$.
By $Q(v)\leq 0$, we have
\[
\|v\|_{\dot{H}^s}^2\leq \|v\|^{p+2}_{L^{p+2}}\leq C_{opt}\|v\|_{\dot{H}^s}^{\frac{p N}{2s}}
\| v \|^{(p+2)-\frac{pN}{2s}}_{L^{2}},
\]
which implies that
\[
c^{\frac{pN}{2s}-(p+2)} \leq C_{opt}\|v\|_{\dot{H}^s}^{\frac{p N}{2s}-2}.
\]
Taking the infimum over $v$, we get $\widetilde{m}(c)>0$.

We now show the minimizing problem \eqref{minimization1} is attained.
Let $\{v_n\}$ be a minimizing sequence for \eqref{minimization1}, i.e.,
 $\{v_n\}\subseteq S(c)$, $Q(v_n)\leq 0$ and $\widetilde{E}(v_n)\rightarrow \widetilde{m}(c)$ as $n\rightarrow \infty$. Thus, there exists $C_0>0$ such that
\[
\frac{Np}{2(p+2)}\liminf_{n\rightarrow \infty}\|v_n\|^{p+2}_{L^{p+2}}\geq s\liminf_{n\rightarrow \infty}\|v_n\|_{\dot{H}^s}^2\geq C_0>0.
\]
Applying Lemma \ref{lemma compactness lemma I}, there exist a subsequence, still denoted by $\{v_n\}$ and $u\in H^s\backslash \{0\}$ such that
\[
u_n:=\tau_{x_n}v_n\rightharpoonup u\neq 0~~weakly~in ~H^s,
\]
for some $\{x_n\}\subseteq \mathbb{R}^N$.
Moreover, we deduce from Lemma \ref{lemma Brezis Lieb} that
\begin{equation}\label{bl1}
Q(u_n)-Q(u_n-u)-Q(u)\rightarrow 0,
\end{equation}
\begin{equation}\label{bl2}
\widetilde{E}(u_n)-\widetilde{E}(u_n-u)-\widetilde{E}(u)\rightarrow 0,
\end{equation}
\begin{equation}\label{bl3}
\|u_n\|_{L^{2}}^{2}-\|u_n-u\|_{L^{2}}^2-\|u\|_{L^{2}}^{2}\rightarrow 0.
\end{equation}
Now, we show that $Q(u)\leq 0$ and $\|u\|_{L^{2}}^{2}=c$ by excluding the other possibilities:

(1) If $Q(u)> 0$ and $\|u\|_{L^{2}}^{2}<c$, it follows from \eqref{bl1} and
$Q(u_n)\leq 0$ that $Q(u_n-u)\leq0$ for sufficiently large $n$.
Set $c_1=c-\|u\|_{L^{2}}^{2}$ and $w_n=\sqrt{c_1}\|u_n-u\|_{L^{2}}^{-1}(u_n-u)$, then we have
\[
\|u_n-u\|_{L^{2}}\rightarrow \sqrt{c_1},~~w_n\in S(c_1),~~and~~Q(w_n)\leq 0.
\]
Thus, by the definition of
$\widetilde{m}(c_1)$, it follows that
\[
\widetilde{E}(w_n)\geq \widetilde{m}(c_1)~~and~~\widetilde{E}(u_n-u)\geq \widetilde{m}(c_1).
\]
Applying $\widetilde{m}(c_1)=m(c_1)> m(c)$, \eqref{bl2}, we can obtain
$\widetilde{E}(u)=\frac{Np-4s}{2Np}\|u\|_{\dot{H}^s}^2\leq0$
which is a contradiction with $u\neq 0$.

(2) If $Q(u)> 0$ and $\|u\|_{L^{2}}^{2}=c$, then $u_n\rightarrow u$ in $L^2$ as $n\rightarrow \infty$. This implies that
$u_n\rightarrow u$ in $L^{p+2}$ as $n\rightarrow \infty$. On the other hand, we deduce from $Q(u)> 0$ that $Q(u_n-u)\leq0$ for sufficiently large $n$. Thus, we can obtain $u_n\rightarrow u$ in $\dot{H}^s$ as $n\rightarrow \infty$.
This yields $Q(u_n-u)\rightarrow 0$ as $n\rightarrow \infty$. Thus, it follows from \eqref{bl1} and $Q(u)> 0$ that $Q(u_n)> 0$ for sufficiently large $n$, which is a contradiction with $Q(u_n)\leq 0$.

(3) If $Q(u)\leq0$ and $\|u\|_{L^{2}}^{2}<c$, then we conclude from \eqref{bl2} and $\widetilde{m}(\|u\|_{L^{2}}^{2})=m(\|u\|_{L^{2}}^{2})> m(c)=\widetilde{m}(c)$ that
$\widetilde{E}(u_n-u)=\frac{Np-4s}{2Np}\|u_n-u\|_{\dot{H}^s}^2<0$, which is a contradiction.

Therefore, we have $Q(u)\leq 0$ and $\|u\|_{L^{2}}^{2}=c$. It follows from the definition of
$\widetilde{m}(c)$ and the weak lower semicontinuity of norm that
\[
\widetilde{m}(c)\leq \widetilde{E}(u)\leq \liminf_{n\rightarrow \infty}\widetilde{E}(u_n)=\widetilde{m}(c).
\]
This yields that
\[
\widetilde{E}(u)=\widetilde{m}(c).
\]

Finally, we show that $Q(u)=0$. Suppose that $Q(u)<0$ and set
\[
f(\lambda):=Q(u^\lambda)=\lambda^{2s}   s\|u\|_{\dot{H}^s}^2- \frac{N  p}{2}\frac{\lambda^{\frac{N  p}{2}}}{p+2} \|u\|^{p+2}_{L^{p+2}},
\]
then $f(\lambda)>0$ for sufficiently small $\lambda>0$ and $f(1)=Q(u)<0$.
 Therefore, there exists $\lambda_0\in (0,1)$ such that $Q(u^{\lambda_0})=0$.
Then, it follows that
\begin{align*}
\widetilde{E}(u^{\lambda_0})=\frac{Np-4s}{2Np} \|u\|_{\dot{H}^s}^2\lambda_0^{2s}<\widetilde{E}(u)=\widetilde{m}(c),
\end{align*}
which contradicts the definition of $\widetilde{m}(c)$. Hence, we have $Q(u)=0$.
\end{proof}

By the fact $\widetilde{m}(c)=m(c)$ and this proposition, we can obtain the following Corollary.
\begin{corollary}\label{corollary existence}
Let $N\geq 2$, $f(u)=|u|^{p}u$ with $\frac{4s}{N}<p<\frac{4s}{N-2s}$ or $f(u)=(|x|^{-\gamma}\ast|u|^2)u$ with $2s<\gamma<\min\{N,4s\}$. Then there exists $u\in V(c)$ and $E(u)=m(c)$.
\end{corollary}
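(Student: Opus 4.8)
The plan is to read off Corollary~\ref{corollary existence} from Proposition~\ref{proposition existence}, once we know that $\widetilde m(c)=m(c)$ and that $E$ and $\widetilde E$ coincide on $V(c)$.

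First I would record that on $V(c)$ one has $Q(u)=0$, so by the very definition \eqref{s1u} of $\widetilde E$ (and its Hartree analogue) we have $\widetilde E(u)=E(u)$ for every $u\in V(c)$; hence, using $V(c)\subseteq\{v\in S(c):Q(v)\le 0\}$, we get $\widetilde m(c)\le \inf_{u\in V(c)}\widetilde E(u)=\inf_{u\in V(c)}E(u)=m(c)$. For the reverse inequality I would take an arbitrary $v\in S(c)$ with $Q(v)\le 0$; by Lemma~\ref{lemma max energy}(i)--(ii) there is $\lambda_0\in(0,1]$ with $v^{\lambda_0}\in V(c)$, and since in the power case $\widetilde E(w)=\frac{Np-4s}{2Np}\|w\|_{\dot H^s}^2$, this gives $\widetilde E(v^{\lambda_0})=\frac{Np-4s}{2Np}\lambda_0^{2s}\|v\|_{\dot H^s}^2\le\widetilde E(v)$, while $\widetilde E(v^{\lambda_0})=E(v^{\lambda_0})\ge m(c)$ because $v^{\lambda_0}\in V(c)$. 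Thus $\widetilde E(v)\ge m(c)$ for every admissible $v$, so $\widetilde m(c)\ge m(c)$, and altogether $\widetilde m(c)=m(c)$. The Hartree case is identical, replacing $\frac{Np-4s}{2Np}$ by $\frac{\gamma-2s}{2\gamma}$ and using \eqref{quhartree}.

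Next I would invoke Proposition~\ref{proposition existence}: it yields $u\in V(c)$ with $\widetilde E(u)=\widetilde m(c)$. Since $u\in V(c)$ forces $Q(u)=0$, we conclude $E(u)=\widetilde E(u)=\widetilde m(c)=m(c)$, which is exactly the assertion of the corollary.

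The only point requiring care is the identity $\widetilde m(c)=m(c)$, specifically the direction $\widetilde m(c)\ge m(c)$, which relies on the scaling lemma to project a general competitor with $Q\le 0$ onto the manifold $V(c)$ without increasing $\widetilde E$. There is no genuine analytic obstacle here: the substantive work --- the concentration step through Lemma~\ref{lemma compactness lemma I} together with the Brezis--Lieb splittings --- has already been carried out inside Proposition~\ref{proposition existence}.
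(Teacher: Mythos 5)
Your argument is correct and follows the paper's route exactly: invoke Proposition \ref{proposition existence} to get a minimizer $u\in V(c)$ of $\widetilde E$, note that $Q(u)=0$ on $V(c)$ forces $E(u)=\widetilde E(u)$, and conclude via $\widetilde m(c)=m(c)$. The only difference is that you actually prove the identity $\widetilde m(c)=m(c)$ (via Lemma \ref{lemma max energy} and the scaling $\lambda_0\le 1$), whereas the paper simply asserts this fact; your scaling argument is sound and fills that small gap.
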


\begin{lemma}\cite{k-cc} \label{lemma zhang}
Let $X$ be a real Banach space, $U\subset X$ be an open set. Suppose that $f, g_1, \cdot\cdot\cdot, g_m:U\rightarrow\mathbb{R}^1$ are $ \mathrm{C}^1$ functions and $x_0\in M$ is such that
$f(x_0) =\inf_{x\in M}$ with
\[
M=\{x\in U : g_i(x)=0,i=1,2,\cdot\cdot\cdot,m\}.
\]
If $\{g'_i(x_0)\}^m_{i=1}$ is linearly independent, then there exists $k_1, \cdot\cdot\cdot, k_m\in\mathbb{R}$ such that
\begin{align*}
f'(x_0)+\sum^m_{i=1} k_i g'_i(x_0)=0.
\end{align*}
\end{lemma}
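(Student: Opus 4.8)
The plan is to prove this finite-codimensional Lagrange multiplier rule by a soft implicit-function-theorem argument. First I would package the constraints into a single map $G:=(g_1,\ldots,g_m):U\to\mathbb{R}^m$, which is of class $C^1$. The hypothesis that $g_1'(x_0),\ldots,g_m'(x_0)$ are linearly independent in $X^*$ is precisely the statement that the bounded linear operator $G'(x_0):X\to\mathbb{R}^m$ is surjective. Consequently $X_0:=\ker G'(x_0)=\bigcap_{i=1}^m\ker g_i'(x_0)$ is a closed subspace of codimension $m$, hence topologically complemented: we may split $X=X_0\oplus W$ with $\dim W=m$, and the restriction $G'(x_0)|_W:W\to\mathbb{R}^m$ is then a linear isomorphism between $m$-dimensional spaces.

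Next I would use the implicit function theorem to obtain a $C^1$ local parametrization of $M$ near $x_0$. Writing increments around $x_0$, consider $\Phi(y,w):=G(x_0+y+w)$ for $(y,w)$ near $(0,0)$ in $X_0\times W$; then $\Phi(0,0)=0$, the partial differential $\partial_w\Phi(0,0)=G'(x_0)|_W$ is invertible, and $\partial_y\Phi(0,0)=G'(x_0)|_{X_0}=0$. The implicit function theorem yields a $C^1$ map $w=\phi(y)$ defined on a neighbourhood of $0$ in $X_0$ with $\phi(0)=0$, $\phi'(0)=-\bigl(\partial_w\Phi(0,0)\bigr)^{-1}\partial_y\Phi(0,0)=0$, and $G(x_0+y+\phi(y))=0$, i.e. $x_0+y+\phi(y)\in M$. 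Then for each fixed $v\in X_0$ the curve $t\mapsto\sigma(t):=x_0+tv+\phi(tv)$ lies in $M$ for $|t|$ small, with $\sigma(0)=x_0$ and $\sigma'(0)=v+\phi'(0)v=v$. Since $t\mapsto f(\sigma(t))$ has a local minimum at $t=0$, differentiating at $t=0$ gives $f'(x_0)v=0$; as $v\in X_0$ was arbitrary, $f'(x_0)$ annihilates $X_0$.

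Finally I would invoke the elementary linear-algebra fact that a linear functional vanishing on the common kernel of finitely many functionals lies in their span. Indeed, since $X_0=\bigcap_i\ker g_i'(x_0)\subset\ker f'(x_0)$, the functional $f'(x_0)$ factors through the surjective map $x\mapsto(g_1'(x_0)x,\ldots,g_m'(x_0)x)\in\mathbb{R}^m$, so there are scalars $\lambda_i$ with $f'(x_0)=\sum_{i=1}^m\lambda_i\,g_i'(x_0)$; setting $k_i:=-\lambda_i$ gives the claimed identity $f'(x_0)+\sum_{i=1}^m k_i g_i'(x_0)=0$.

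I expect the only delicate point to be the bookkeeping in the implicit-function step: justifying that $X_0$ is complemented (which holds because it is closed of finite codimension), that $G'(x_0)|_W$ is genuinely an isomorphism of $m$-dimensional spaces, and that $\phi'(0)=0$ so that the test curve satisfies $\sigma'(0)=v$ exactly. Everything else is routine — the surjectivity of $G'(x_0)$ from linear independence, the chain rule $\frac{d}{dt}f(\sigma(t))|_{t=0}=f'(x_0)v$, and the final factorization of $f'(x_0)$ through $\mathbb{R}^m$.
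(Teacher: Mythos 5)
Your argument is correct, and since the paper does not prove this lemma at all (it is quoted verbatim from Chang's book \cite{k-cc}), there is no internal proof to compare against; your route is precisely the standard one used for the Banach-space Lagrange multiplier rule. The key steps — surjectivity of $G'(x_0):X\to\mathbb{R}^m$ from linear independence, the topological splitting $X=\ker G'(x_0)\oplus W$ with $W$ finite-dimensional, the implicit-function-theorem parametrization $y\mapsto x_0+y+\phi(y)$ of $M$ with $\phi'(0)=0$ so that $f'(x_0)$ annihilates $\ker G'(x_0)$, and the linear-algebra factorization giving $f'(x_0)\in\mathrm{span}\{g_i'(x_0)\}$ — are all justified as you state them, so the proof is complete.
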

\begin{lemma}\label{critical point 1}
Let $f(u)=|u|^{p}u$ with $\frac{4s}{N}<p<\frac{4s}{N-2s}$ or $f(u)=(|x|^{-\gamma}\ast|u|^2)u$ with $2s<\gamma<\min\{N,4s\}$. Then each critical point of
$E\mid_{V(c)}$ is a critical point of $E\mid_{S(c)}$.
\end{lemma}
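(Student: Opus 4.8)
The plan is to view $V(c)$, defined in \eqref{submanifold}, as the common zero set of the two $C^1$ functionals $G_1(v):=\|v\|_{L^2}^2-c$ and $G_2(v):=Q(v)$ on $H^s$, and to apply the Lagrange multiplier rule of Lemma \ref{lemma zhang}. First I would check that the differentials $G_1'(u)=2u$ and $G_2'(u)=Q'(u)$ are linearly independent at every $u\in V(c)$, so that $V(c)$ is a genuine codimension-two submanifold and the notion of a critical point of $E|_{V(c)}$ makes sense. Granting this, Lemma \ref{lemma zhang} applied at such a critical point $u$ yields $\mu,\nu\in\mathbb{R}$ with
\[
E'(u)+2\mu u+\nu Q'(u)=0\quad\text{in }H^{-s}.
\]
Since $u$ being a critical point of $E|_{S(c)}$ means precisely that $E'(u)+\lambda u=0$ in $H^{-s}$ for some $\lambda\in\mathbb{R}$, the whole statement reduces to showing $\nu=0$.

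To prove $\nu=0$ I would use the mass-preserving scaling $u^\lambda(x)=\lambda^{N/2}u(\lambda x)$. In the power case the identity above reads $(1+2s\nu)(-\Delta)^su+2\mu u-(1+\tfrac{Np}{2}\nu)|u|^pu=0$ (in the Hartree case, with $\gamma$ in place of $\tfrac{Np}{2}$), so $u$ is a weak solution of a fractional NLS with modified coefficients. From this I extract three scalar relations: pairing the equation with $u$; the Pohozaev identity for the modified equation, which follows from Lemma \ref{lemma Pohozaev identity} by linearity in the coefficients (i.e. by pairing with $x\cdot\nabla u$); and the constraint $Q(u)=0$. Because $u\in S(c)$ is nonzero and $Q(u)=0$ forces $\|u\|_{\dot{H}^s}>0$, eliminating the nonlinear term and $\mu c$ from these three relations collapses — using $\tfrac{4s}{N}<p$ (resp. $2s<\gamma$) — to the equality of coefficients $1+2s\nu=1+\tfrac{Np}{2}\nu$ (resp. $1+2s\nu=1+\gamma\nu$), i.e. $\nu=0$. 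More conceptually, one may instead test the Lagrange identity against the dilation generator $\tfrac{N}{2}u+x\cdot\nabla u$: the $E'$-term contributes $\tfrac{d}{d\lambda}E(u^\lambda)\big|_{\lambda=1}=Q(u)=0$ in view of the identity $g'(\lambda)=\lambda^{-1}Q(u^\lambda)$ from the proof of Lemma \ref{lemma max energy}, the $u$-term contributes $0$, and the $Q'$-term contributes $\nu\,\tfrac{d}{d\lambda}Q(u^\lambda)\big|_{\lambda=1}=\nu\cdot\tfrac{s(4s-Np)}{2}\|u\|_{\dot{H}^s}^2$ (resp. $\nu\, s(2s-\gamma)\|u\|_{\dot{H}^s}^2$), a nonzero multiple of $\nu$; hence $\nu=0$. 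Putting $\nu=0$ back gives $E'(u)+2\mu u=0$, so $u$ is a critical point of $E|_{S(c)}$.

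The linear independence needed to invoke Lemma \ref{lemma zhang} I would verify by the same circle of ideas: if $Q'(u)=2tu$ for some $t\in\mathbb{R}$, then $u$ weakly solves $2s(-\Delta)^su-\tfrac{Np}{2}|u|^pu=2tu$, and rescaling $v=\kappa u$ with $\kappa^p=\tfrac{Np}{4s}>0$ brings this to the form \eqref{elliptic equation}; Lemma \ref{w=0} then gives $Q(v)=0$, and combined with $Q(u)=0$ and $\|u\|_{\dot{H}^s}>0$ this forces $\kappa=1$, i.e. $Np=4s$, contradicting $p>\tfrac{4s}{N}$ (the Hartree case is identical, giving $\gamma=2s$). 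The step I expect to be the main obstacle is making the Pohozaev identity for the modified equation — or, in the alternative route, the pairing against $x\cdot\nabla u$, which need not a priori lie in $H^s$ — fully rigorous; this rests on the elliptic regularity of $H^s$-weak solutions of the fractional NLS, which upgrades a distributional solution to one smooth and decaying enough for the integration by parts. The borderline coefficient $1+2s\nu=0$ is then harmless: it is incompatible with the coefficient equality $1+2s\nu=1+\tfrac{Np}{2}\nu$ derived above (since $1+2s\nu=0$ would force $1+\tfrac{Np}{2}\nu=1-\tfrac{Np}{4s}\neq0$), or directly, pairing with $u$ and the Pohozaev relation in that degenerate case force $\|u\|_{L^{p+2}}=0$, hence $u\equiv0$, contradicting $u\in S(c)$ with $c>0$.
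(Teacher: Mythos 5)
Your proposal is correct and follows essentially the same route as the paper: invoke the Lagrange multiplier rule of Lemma \ref{lemma zhang}, rule out linear dependence of $Q'(u)$ and $u$ (your rescaling-plus-Lemma \ref{w=0} argument is equivalent to the paper's direct Pohozaev computation in its case (i)), and then kill the multiplier on $Q'$ by combining the equation paired with $u$, the Pohozaev identity for the modified equation, and $Q(u)=0$, exactly as in the paper's derivation of $\omega_1=0$ from \eqref{3.4}--\eqref{3.6}. Your explicit treatment of the borderline coefficient $1+2s\nu=0$ (which the paper's division in \eqref{3.6} silently assumes away) and your remark on the regularity needed for the Pohozaev step are welcome refinements, not a different method.
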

\begin{proof}
We only prove the case for $f(u)=|u|^pu$.
Suppose that $u$ is a critical point of $E\mid_{V(c)}$, then by Lemma \ref{lemma zhang}, we have an alternative: either (i) $Q'(u)$ and $(\|u\|^2_{L^2})'$ are linearly dependent, or (ii)
there exists $\omega_1, \omega_2\in \mathbb{R}$ such that
\begin{align}\label{3.3}
E'(u)+\omega_1 Q'(u)+\omega_2 u=0~~~in~H^{-s}.
\end{align}
If (i) holds, then $u$ satisfies
\begin{align*}
2s (-\Delta)^s u+\omega^*u -\frac{Np}{2}|u|^pu=0~~~in~H^{-s},
\end{align*}
for some $\omega^*\in \mathbb{R}$. Multiplying the above equation by $u$ and integrating, we get
\begin{align*}
2s \|u\|^2_{\dot{H}^s} +\omega^*\|u\|^2_{L^2} -\frac{Np}{2} \|u\|^{p+2}_{L^{p+2}}=0.
\end{align*}
By Pohozaev identity, we derive
\begin{align*}
(N-2s) \|u\|^2_{\dot{H}^s} +\frac{N}{2s}\omega^*\|u\|^2_{L^2} -\frac{N^2p}{2s(p+2)} \|u\|^{p+2}_{L^{p+2}}=0.
\end{align*}
Thus we have
\begin{align*}
2s \|u\|^2_{\dot{H}^s} -\frac{N^2p^2}{4s(p+2)} \|u\|^{p+2}_{L^{p+2}}=0.
\end{align*}
Notice that $Q(u) =0$ and $\frac{4s}{N}<p<\frac{4s}{N-2s}$, then we have immediately $\|u\|^2_{\dot{H}^s}=0$, which is a contradiction with $u\in S(c)$. This implies that (i) does not occur and (ii) is true. It is enough to show that $\omega_1=0$. By \eqref{3.3} we have
\begin{align}\label{3.4}
&\langle E'(u)+\omega_1 Q'(u)+\omega_2 u,u\rangle \nonumber\\
=&(1+2s\omega_1)\|u\|^2_{\dot{H}^s}-(1+\frac{Np}{2}\omega_1)\|u\|^{p+2}_{L^{p+2}}+\omega_2\|u\|^2_{L^2}=0.
\end{align}
By Pohozaev identity corresponding to equation \eqref{3.3},
\begin{align}\label{3.5}
(1+2s\omega_1)(N-2s)\|u\|^2_{\dot{H}^s}-\frac{2N}{p+2}(1+\frac{Np}{2}\omega_1)\|u\|^{p+2}_{L^{p+2}}+N\omega_2\|u\|^2_{L^2}=0.
\end{align}
Combining \eqref{3.4} with \eqref{3.5} we have
\begin{align}\label{3.6}
\|u\|^2_{\dot{H}^s}=\frac{Np(1+\frac{Np}{2}\omega_1)}{2s(p+2)(1+2\omega_1s)}\|u\|^{p+2}_{L^{p+2}}.
\end{align}
Since $u\in V(c)$, $\|u\|^2_{\dot{H}^s}=\frac{Np}{2s(p+2)}\|u\|^{p+2}_{L^{p+2}}$, then by \eqref{3.6} we have $\omega_1=0$.
Finally, by Lemma \ref{critical point}, we get $\omega_2>0$.
\end{proof}

\textbf{Proof of Theorem \ref{theorem existence}.}
By Corollary \ref{corollary existence}, there exists a couple of weak solution $(u_c,\omega_c)\in \mathcal{M}_c\times \mathbb{R}^+$ to problems \eqref{elliptic equation}-\eqref{mass constraint}.
If $v\in S(c)$ satisfies $E'|_{S(c)}(v)=0$, then by Lemma \ref{w=0}-\ref{critical point}, we have $Q(v)=0$, which implies that $v\in V(c)$. Hence, $E(v)\geq E(u_c)$ and $u_c\in S(c) $ is a normalized ground state of problems \eqref{elliptic equation}-\eqref{mass constraint}.

By Lemma \ref{w=0}, we have $Q(u_c)=s\|u_c\|^2_{\dot{H}^s}-\frac{Np}{2(p+2)}\|u_c\|^{p+2}_{L^{p+2}}=0$. Applying the inequality \eqref{gn inequality power}, we have
\[
\frac{2s(p+2)}{Np}\|u_c\|^2_{\dot{H}^s}=
\|u_c\|^{p+2}_{L^{p+2}}\leq C_{opt}\|u_c\|^{\frac{p N}{2s}}_{\dot{H}^s}
\| u_c \|^{(p+2)-\frac{pN}{2s}}_{L^{2}},
\]
then
\[
\|u_c\|^{\frac{Np-4s}{2s}}_{\dot{H}^s}\geq \frac{2s(p+2)}{Np}C_{opt}^{-1} c^{\frac{pN}{2s}-(p+2)}\rightarrow+\infty
\]
as $c\rightarrow 0^+$, i.e. $\|u_c\|^2_{\dot{H}^s}\rightarrow +\infty$ as $c\rightarrow 0^+$. Moreover,
\[
m(c)=E(u_c)=\frac{Np-4s}{2Np}\|u_c\|^2_{\dot{H}^s}\rightarrow+\infty
\]
as $c\rightarrow 0^+$. From equation \eqref{elliptic equation}, we have $\|u_c\|^2_{\dot{H}^s}+\omega_c\|u_c\|^2_{L^{p+2}}=\|u_c\|^{p+2}_{L^{p+2}}$, then
\begin{align*}
\omega_c&=\frac{1}{c}\left(\|u_c\|^{p+2}_{L^{p+2}}-\|u_c\|^2_{\dot{H}^s}\right)\nonumber\\
&=\frac{1}{c}\left(\frac{2s(p+2)}{Np}\|u_c\|^2_{\dot{H}^s}-\|u_c\|^2_{\dot{H}^s}\right)\\
&=\frac{1}{c}\frac{2s(p+2)-Np}{Np}\|u_c\|^2_{\dot{H}^s}\nonumber\\
&\rightarrow +\infty
\end{align*}
as $c\rightarrow 0^+$, for $\frac{4s}{N}<p<\frac{4s}{N-2s}$.

 Next, we consider the case $c\rightarrow +\infty$. Let $u_1\in V(1)$, $\widetilde{u}(x)=c^{-\frac{2s}{Np-4s}}u_1\left(c^{-\frac{p}{Np-4s}}x\right)$. By some simple calculations, we have
\[
\|\widetilde{u}\|^2_{\dot{H}^s}=c^{\frac{Np-2ps-4s}{Np-4s}}\|u_1\|^2_{\dot{H}^s}~and~ \|\widetilde{u}\|^{p+2}_{L^{p+2}}=c^{\frac{Np-2ps-4s}{Np-4s}}\|u_1\|^{p+2}_{L^{p+2}}.
\]
These imply that $Q(\widetilde{u})=0$ and
\begin{align}
E(\widetilde{u})=&\frac{1}{2}\|\widetilde{u}\|^2_{\dot{H}^s}-\frac{1}{p+2}\|\widetilde{u}\|^{p+2}_{L^{p+2}}\nonumber\\
=&\frac{Np-4s}{2Np}\|\widetilde{u}\|^2_{\dot{H}^s}\\
=&\frac{Np-4s}{2Np}c^{\frac{Np-2ps-4s}{Np-4s}}\|u_1\|^2_{\dot{H}^s}\nonumber\\
\rightarrow& 0 \nonumber
\end{align}
as $c\rightarrow +\infty$, for $\frac{4s}{N}<p<\frac{4s}{N-2s}$. Therefore, $0<m(c)=E(u_c)\leq E(\widetilde{u})\rightarrow 0$ as $c \rightarrow +\infty$. So
\[
\|u_c\|^2_{\dot{H}^s}=\frac{2Np}{Np-4s}m(c)\rightarrow 0
\]
and
\[
\omega_c=\frac{1}{c}\frac{2s(p+2)-Np}{Np}\|u_c\|^2_{\dot{H}^s}\rightarrow 0
\]
as $c\rightarrow +\infty$. Thus the proof is completed.

\section{Proof of Theorem \ref{theorem ground states}.}

\textbf{Proof of Theorem \ref{theorem ground states}.}
It is standard that if $\frac{4s}{N}<p<\frac{4s}{N-2s}$ or $2s<\gamma<\min\{N,4s\}$
\begin{align}
d(c):=\inf_{v\in \mathcal{N}_{\omega_c}} J_{\omega_c}(v)
\end{align}
 is attained by a function $\tilde{u}$, which is a ground state solution to problem \eqref{elliptic equation} with $\omega=\omega_c$, where $\mathcal{N}_{\omega_c}=\{v\in H^s:~ \langle J'_{\omega_c}(v),v\rangle=0,~v\neq0\}$. Then, by Lemma \ref{critical point} and Lemma \ref{critical point 1}, $(u_c,\omega_c)$ and $(\tilde{u},\omega_c)$ are two couples of weak solution to problem \eqref{elliptic equation}. By Lemma \ref{w=0}, we have $\omega_c >0$ and $Q(u_c)=Q(\tilde{u})=0$, which implies that
\begin{align}\label{3.10}
\|u_c\|^2_{\dot{H}^s}=\frac{Np}{2s(p+2)}\|u_c\|^{p+2}_{L^{p+2}},
~and~\|\tilde{u}\|^2_{\dot{H}^s}=\frac{Np}{2s(p+2)}\|\tilde{u}\|^{p+2}_{L^{p+2}}.
\end{align}
Hence, we have
\begin{align}\label{3.11}
\omega_c\|u_c\|^2_{L^2}=\frac{2s(p+2)-Np}{Np}\|u_c\|^2_{\dot{H}^s},
~and~\omega_c\|\tilde{u}\|^2_{L^2}=\frac{2s(p+2)-Np}{Np}\|\tilde{u}\|^2_{\dot{H}^s}.
\end{align}
By \eqref{3.10}-\eqref{3.11}, we have
\begin{align*}
J_\omega(u_c)=\frac{1}{2}\|u_c\|^2_{\dot{H}^s}+\frac{\omega_c}{2}\|u_c\|^2_{L^2}-\frac{1}{p+2}
\|u_c\|^{p+2}_{L^{p+2}}=\frac{s}{N}\|u_c\|^2_{\dot{H}^s},~and~J_{\omega_c}(\tilde{u})
=\frac{s}{N}\|\tilde{u}\|^2_{\dot{H}^s},
\end{align*}
and
\begin{align*}
E(u_c)=(\frac{1}{2}-\frac{2s}{Np})\|u_c\|^2_{\dot{H}^s}=\frac{Np-4s}{2ps}J_{\omega_c}(u_c)~and~
E(\tilde{u})=\frac{Np-4s}{2ps}J_{\omega_c}(\tilde{u}).
\end{align*}
Since $\tilde{u}$ is a ground state solution to problem \eqref{elliptic equation} with $\omega=\omega_c$, then $J_{\omega_c}(u_c)\geq J_{\omega_c}(\tilde{u})$. Thus
\begin{align}
m(c)=E(u_c)=\frac{Np-4s}{2ps}J_{\omega_c}(u_c)\geq \frac{Np-4s}{2ps}J_{\omega_c}(\tilde{u})=E(\tilde{u})\geq m(\|\tilde{u}\|^2_{L^2}).
\end{align}
By Lemma \ref{c=m(c)}, we have $\|u_c\|^2_{L^2}\leq\|\tilde{u}\|^2_{L^2}$, then $\|u_c\|^2_{\dot{H}^s}\leq \|\tilde{u}\|^2_{\dot{H}^s}$, which implies that $J_{\omega_c}(u_c)\leq J_{\omega_c}(\tilde{u})$.
So, $J_{\omega_c}(\tilde{u})= J_{\omega_c}(u_c)=d(c)$.
This completes the proof of Theorem \ref{theorem ground states}.

\section{Proof of Theorem \ref{theorem sharp}.}

\textbf{Proof of Theorem \ref{theorem sharp}.}
 Firstly, we show that the set
$\mathcal{A}_c$ and $\mathcal{B}_c$
are not empty. Indeed, for arbitrary but fixed $u\in S(c)$, set $u^\lambda(x)=\lambda^{\frac{N}{2}}u(\lambda x)$. Then we have $u^\lambda \in S(c)$, for all $\lambda>0$; $E(u^\lambda)\rightarrow 0$ as $\lambda \rightarrow0$ and $Q(u^\lambda)>0$ for sufficiently small $\lambda >0$. This proves that $\mathcal{A}_c \neq \emptyset$. In addition, $E(u^\lambda)\rightarrow -\infty$ and $Q(u^\lambda)\rightarrow -\infty$ as $\lambda \rightarrow +\infty$. Thus, $\mathcal{B}_c \neq \emptyset$.

In the following, we will prove that $\mathcal{A}_{\|\psi_0\|_{L^2}^2}$ and $\mathcal{B}_{\|\psi_0\|_{L^2}^2}$ are two invariant manifolds of \eqref{e}.
Let $\psi_0\in \mathcal{A}_{\|\psi_0\|_{L^2}^2}$, by Proposition 2.1, we see that there
exists a unique solution $\psi\in C([0,T^*),H^s)$ with initial data $\psi_0$. We deduce from the conservations of energy that
\begin{equation}\label{asc}
E(\psi(t))=E(\psi_0)<m(\|\psi_0\|_{L^2}^2),
\end{equation}
for any $t\in [0,T^*)$. In addition, by the continuity of the function $t\mapsto Q(\psi(t))$ and
Corollary 3.7, if there exists
$t_0\in[0,T^*)$ so that  $Q(\psi(t_0))= 0$, then $E(\psi(t_0))\geq m(\|\psi_0\|_{L^2}^2)$, which contradicts with \eqref{asc}. Therefore, we have $Q(\psi(t))>0 $ for any $t\in [0,T^*)$. Similarly, we can prove that $ \mathcal{B}_{\|\psi_0\|_{L^2}^2}$ is invariant under the flow of \eqref{e}.

Now, we prove (1). Let us prove (1) by contradiction. If not, there exists $T^*>0$ such that
\begin{align}\label{3.16}
\lim_{t\rightarrow T^*} \|\psi(t)\|_{\dot{H}^s}=+\infty.
\end{align}
Applying the conservation of energy, we have
\[
E(\psi_0)-\frac{2}{Np}Q(\psi(t))=E(\psi(t))-\frac{2}{Np}Q(\psi(t))=(\frac{1}{2}-\frac{2s}{Np})\|\psi(t)\|^2_{\dot{H}^s},
\]
which implies that
\begin{align*}
\lim_{t \rightarrow T^*}Q(\psi(t))=-\infty,
\end{align*}
if \eqref{3.16} happens. Since $Q(\psi_0)>0$, by continuity there exists $t_0\in (0,T^*)$ such that
\[
Q(\psi(t_0))=0~and~E(\psi(t_0))=E(\psi_0)<m(\|\psi_0\|^2_{L^2}).
\]
This contradicts to the fact that $m(\|\psi_0\|^2_{L^2})=\inf_{u\in V(\|\psi_0\|^2_{L^2})}E(u)$.
Thus, if $\psi_0\in \mathcal{A}_{\|\psi_0\|_{L^2}^2}$, then the solution $\psi(t)$ of \eqref{e} exists globally.

Next, we prove (2).
If $\psi_0\in \mathcal{B}_{\|\psi_0\|_{L^2}^2}$,
then $Q(\psi(t))<0 $ for any $t\in [0,T^*)$. We deduce from Proposition 3.6 that
\[
m(\|\psi_0\|_{L^2}^2)=\widetilde{m}(\|\psi_0\|_{L^2}^2)\leq \widetilde{E}(\psi(t))=E(\psi(t))-\frac{2}{Np}Q(\psi(t))< E(\psi_0)-\frac{Q(\psi(t))}{2s},
\]
 for all $t\in [0,T^*)$. This implies that
\begin{align}\label{Qxiaoyuling}
Q(\psi(t))\leq 2s(E(\psi_0)-m(\|\psi_0\|_{L^2}^2))<0,
\end{align}
  for all $t\in [0,T^*)$.

Now, we claim that there exists $C_1>0$ such that
\begin{align}\label{claim2}
\frac{d}{dt} M_{\varphi_R}(\psi(t))\leq -C_1\|\psi(t)\|^2_{\dot{H}^s},
\end{align}
for $f(u)=|u|^pu$ and any $t\in [0,T^*)$, where $M_{\varphi_R}(\psi(t))$ is defined by \eqref{virial action}.
Firstly, we prove that there exists $C_2>0$ such that
\begin{align}\label{lowerbound}
\|\psi(t)\|_{\dot{H}^s}\geq C_2,
\end{align}
for every $t\in [0,T^*)$. Indeed, suppose this bound is not true, then there exists
$\{t_k\}\subseteq [0,T^*)$ such that
$\|\psi(t_k)\|_{\dot{H}^s}\rightarrow 0$. However, we deduce from mass conservation and the sharp
Gagliardo-Nirenberg inequality \eqref{gn inequality power} that
\[
\|\psi(t_k)\|_{L^{p+2}}^{p+2}\leq C_{opt}\|\psi(t_k)\|^{\frac{p N}{2s}}_{\dot{H}^s}
\|\psi(t_k)\|^{(p+2)-\frac{pN}{2s}}_{L^{2}}\rightarrow 0
\]
as $k\rightarrow \infty$.
Therefore, we have
\begin{align*}
Q(\psi(t_k)):=s\|\psi(t_k)\|_{\dot{H}^s}^2 -\frac{Np}{2(p+2)}\|\psi(t_k)\|_{L^{p+2}}^{p+2}\rightarrow 0,
\end{align*}
as $k\rightarrow \infty$, which contradicts to \eqref{Qxiaoyuling}.

We now prove \eqref{claim2}. Since the solution $\psi(t)$ is radial, we apply Lemma
\ref{lemma Hs radial virial estimate} to have
	\begin{align*}
		\frac{d}{dt} M_{\varphi_R}(\psi(t)) &\leq 4s \|\psi(t)\|^2_{\dot{H}^s}  - \frac{2N p}{p+2} \|\psi(t)\|^{p+2}_{L^{p+2}}   \\
		&\mathrel{\phantom{\leq}} + O \left( R^{-2s}+ R^{-\frac{p(N-1)}{2} + \varepsilon s} \|\psi(t)\|^{\frac{p}{2s}+\varepsilon}_{\dot{H}^s} \right),
		\end{align*}
for all  $t\in [0,T^*)$ and $R>1$. Thanks to the assumption $p<4s$, we can apply the Young inequality
to obtain for any $\eta>0$,
\[
R^{-\frac{p(N-1)}{2} + \varepsilon s} \|\psi(t)\|^{\frac{p}{2s}+\varepsilon}_{\dot{H}^s}\leq C\eta \|\psi(t)\|_{\dot{H}^s}^2+\eta^{-\frac{p+2\varepsilon s}{4s-p-2\varepsilon s}}
R^{-\frac{2s(p(N-1)-2\varepsilon s)}{4s-p-2\varepsilon s}}.
\]
We thus obtain
		\begin{align}\label{verilequaty}
		\frac{d}{dt} M_{\varphi_R}(\psi(t)) &\leq  4s \|\psi(t)\|^2_{\dot{H}^s} - \frac{2N p}{p+2} \|\psi(t)\|^{p+2}_{L^{p+2}} + C\eta\|\psi(t)\|^2_{\dot{H}^s}  \nonumber\\
		&\mathrel{\phantom{\leq}}+ O \left( R^{-2s} + \eta^{-\frac{p+2\varepsilon s}{4s-p-2\varepsilon s}}
R^{-\frac{2s(p(N-1)-2\varepsilon s)}{4s-p-2\varepsilon s}} \right),
		\end{align}
for all  $t\in [0,T^*)$, any $\eta>0$, any $R>1$ and some constant $C>0$.

We fix $t\in [0,T^*)$ and denote
\[
\mu:=\frac{4Np|E(\psi_0)|+2}{Np-4s}.
\]
We consider two cases.

\textbf{Case 1.}
\[
\|\psi(t)\|^2_{\dot{H}^s}\leq \mu.
\]
Since
\[
 4s \|\psi(t)\|^2_{\dot{H}^s} - \frac{2N p}{p+2} \|\psi(t)\|^{p+2}_{L^{p+2}}=4Q(\psi(t))\leq 8s(E(\psi_0)-m(\|\psi_0\|_{L^2}^2))
\]
for all  $t\in [0,T^*)$, we have
\begin{align*}
\frac{d}{dt} M_{\varphi_R}(\psi(t)) &\leq  8s(E(\psi_0)-m(\|\psi_0\|_{L^2}^2))+ C\eta\mu  \\
		&\mathrel{\phantom{\leq}}+ O \left( R^{-2s} + \eta^{-\frac{p +2\varepsilon_2s}{4s-p-2\varepsilon s}}
R^{-\frac{2s(p (N-1)-2\varepsilon s)}{4s-p-2\varepsilon s}} \right).
		\end{align*}
By choosing $\eta>0$ small enough and $R>1$ large enough depending on $\eta$, it follows that
\begin{align}\label{case1}
\frac{d}{dt} M_{\varphi_R}(\psi(t)) &\leq  8s(E(\psi_0)-m(\|\psi_0\|_{L^2}^2))\leq \frac{8s(E(\psi_0)-m(\|\psi_0\|_{L^2}^2))}{\mu}\|\psi(t)\|^2_{\dot{H}^s}.
\end{align}

\textbf{Case 2.}
\[
\|\psi(t)\|^2_{\dot{H}^s}> \mu.
\]
In this case, it follows from conservation of energy that
\begin{align*}
 4s \|\psi(t)\|^2_{\dot{H}^s}  - \frac{2N p}{p+2} \|\psi(t)\|^{p+2}_{L^{p+2}}
=&2Np E(\psi(t)) -(Np-4s) \|\psi(t)\|^2_{\dot{H}^s} \\ \leq& \frac{\mu}{2} (Np-4s)-1-(Np-4s) \|\psi(t)\|^2_{\dot{H}^s}.
\end{align*}
We thus obtain
		\begin{align*}
		\frac{d}{dt} M_{\varphi_R}(\psi(t)) &\leq  -1-\frac{Np-4s}{2}\|\psi(t)\|^2_{\dot{H}^s}+ C\eta\|\psi(t)\|^2_{\dot{H}^s} \\
		&\mathrel{\phantom{\leq}}+ O \left( R^{-2s} + \eta^{-\frac{p+2\varepsilon s}{4s-p-2\varepsilon s}}
R^{-\frac{2s(p(N-1)-2\varepsilon s)}{4s-p-2\varepsilon s}} \right).
		\end{align*}
Since $Np -4s>0$, we choose $\eta>0$ small enough so that
\[
\frac{Np-4s}{2}-C\eta\geq \frac{Np-4s}{4}.
\]
We next choose $R>1$ large enough depending on $\eta$ so that
\[
-1+ O \left( R^{-2s} + \eta^{-\frac{p+2\varepsilon s}{4s-p-2\varepsilon s}}
R^{-\frac{2s(p(N-1)-2\varepsilon s)}{4s-p-2\varepsilon s}} \right)\leq 0.
\]
We thus obtain
\begin{align*}
		\frac{d}{dt} M_{\varphi_R}(\psi(t)) \leq -\frac{Np -4s}{4}\|\psi(t)\|^2_{\dot{H}^s}.
		\end{align*}

We are now able to show that the solution $\psi(t)$ blows up in a  finite time. Assume by contradiction
that $T^*=\infty$. It follows from \eqref{claim2} and \eqref{lowerbound} that $\frac{d}{dt}\mathcal{M}_{\varphi}[\psi(t)]\leq -C$ with some constant $C>0$. Integrating this bound, we conclude that $\mathcal{M}_{\varphi}[\psi(t)]<0$ for all $t\geq t_1$ with some time sufficiently large time $t_{1}\gg1$. Thus, integrating \eqref{claim2} on $[t_{1},t]$, we obtain
\begin{equation} \label{302y}
\mathcal{M}_{\varphi}[\psi(t)]\leq-c\int_{t_{1}}^{t}\|(-\Delta)^{\frac{s}{2}}\psi(\tau)\|^{2}_{L^{2}}d\tau\,\,\,\,\mbox{for all}~~~t\geq t_{1}.
\end{equation}
On the other hand, we use Lemma 2.5 and $L^{2}$-mass conservation to find that
\begin{equation} \label{302x}
\mid\mathcal{M}_{\varphi}[\psi(t)]\mid\leq C(\varphi_{R})(\|(-\Delta)^{\frac{s}{2}}\psi(t)\|^{\frac{1}{s}}_{L^{2}}
+\|(-\Delta)^{\frac{s}{2}}\psi(t)\|^{\frac{1}{2s}}_{L^{2}}),
\end{equation}
where we used the interpolation estimate $\| |\nabla|^{\frac{1}{2}} \psi\|_{L^{2}}\leq\| \psi\|_{L^{2}}^{1-\frac{1}{2s}}\|(-\Delta)^{\frac{s}{2}}\psi\|_{L^{2}}^{\frac{1}{2s}}$ for $s>\frac{1}{2}$.

So, we deduce from \eqref{lowerbound} and \eqref{302x} that
 \begin{equation}
 \begin{gathered}
   |\mathcal{M}_{\varphi}[\psi(t)]|\leq C(\varphi_{R})\|(-\Delta)^{\frac{s}{2}}\psi(t)\|^{\frac{1}{s}}_{L^{2}}.
   \end{gathered}
 \label{e3.11}
\end{equation}
This, together with \eqref{302y}, implies that
\begin{equation}
 \begin{gathered}
  \mathcal{M}_{\varphi}[\psi(t)]\leq -C(\varphi_{R})\int_{t_{1}}^{t} |\mathcal{M}_{\varphi}[\psi(\tau)]|^{2s}d\tau\,\,\,\,\,\mbox{for}\,\,\,\,t\geq t_{1}.
   \end{gathered}
 \label{e3.12}
\end{equation}
This yields $\mathcal{M}_{\varphi}[\psi(t)]\leq-C(\varphi_{R})| t-t_{\ast}|^{1-2s}$ for $s>\frac{1}{2}$ with some $t_{\ast}<+\infty$. Therefore, we have $\mathcal{M}_{\varphi}[\psi(t)]\rightarrow-\infty$ as $t\rightarrow t_{\ast}$. Hence the solution $\psi(t)$ cannot exist for all time $t\geq0$ and consequently we must have that $T^*<+\infty$ holds.

Finally, we prove (3). By a similar argument as the case $f(\psi)=|\psi|^{p}\psi$,
we can also obtain \eqref{Qxiaoyuling} for \eqref{e} with $f(\psi)=(|x|^{-\gamma}\ast|\psi|^2)\psi$.
It follows from  \cite{blowup-Har,zhu16jde} that
$x\psi(t)\in L^2$, $x\cdot\nabla \psi(t)\in L^2$ for all $t\in[0,T^*)$. Moreover,
$\int_{\mathbb{R}^N}\bar{\psi}x(-\Delta)^{1-s}x\psi dx$ is non-negative and
\[
\frac{d^2}{dt^2}\int_{\mathbb{R}^N}\bar{\psi}(t,x)x(-\Delta)^{1-s}x\psi(t,x) dx\leq 2Q(\psi(t))\leq 4s(E(\psi_0)-m(\|\psi_0\|_{L^2}^2))<0,
\]
where we use \eqref{Qxiaoyuling}.
This implies that there exists $0<T^*<\infty$ such that $$\int_{\mathbb{R}^N}\bar{\psi}(T^*,x)x(-\Delta)^{1-s}x\psi(T^*,x
) dx=0.$$
Now, using the conservation of mass and the inequality
\[
\|u\|_{L^2}^2\leq\frac{2}{N}
\left(\int_{\mathbb{R}^N}\bar{u}x(-\Delta)^{1-s}xudx\right)^{1/2}
\left(\int_{\mathbb{R}^N}\bar{u}(-\Delta)^{s}u dx\right)^{1/2}, ~~for~u\in H^s,
\]
 we see that for all $t\in[0,T^*)$
\begin{align*}
\|\psi_0\|_{L^2}^2=\|\psi(t)\|_{L^2}^2&\leq\frac{2}{N}
\left(\int_{\mathbb{R}^N}\bar{\psi}(t,x)x(-\Delta)^{1-s}x\psi(t,x)dx\right)^{1/2}
\left(\int_{\mathbb{R}^N}\bar{\psi}(t,x)(-\Delta)^{s}\psi(t,x) dx\right)^{1/2}\\&\leq\frac{2}{N}
\left(\int_{\mathbb{R}^N}\bar{\psi}(t,x)x(-\Delta)^{1-s}x\psi(t,x)dx\right)^{1/2}
\|\psi(t)\|_{\dot{H}^s}.
		\end{align*}
This yields that $\lim_{t\rightarrow T^*}\|\psi(t)\|_{\dot{H}^s}=\infty$.
This completes the proof of Theorem \ref{theorem sharp}.

\vspace{0.5cm}

{\bf Acknowledgments}

The first author is supported by the National Natural Science Foundation of China (No. 11601435). The third author is supported by the National Natural Science Foundation of China (No. 11501395).

\end{document}